\newtheorem{theorem}{Theorem}[section]
\newtheorem{lemma}[theorem]{Lemma}
\newtheorem{proposition}[theorem]{Proposition}
\newtheorem{corollary}[theorem]{Corollary}
\newtheorem{remark}[theorem]{Remark}
\newenvironment{proof}[1][Proof]{\noindent\textbf{#1.} }{\hfill \rule{0.5em}{0.5em}}
\begin{document}
\title{ Multi-bump solutions for a class of quasilinear problems involving variable exponents \footnote{Partially supported by INCT-MAT and PROCAD}}

\author{Claudianor O. Alves\footnote{C.O. Alves was partially supported by CNPq/Brazil
303080/2009-4, e-mail:coalves@dme.ufcg.edu.br}
\,\,\, and \,\,\,  Marcelo C. Ferreira \footnote{e-mail:marcelo@dme.ufcg.edu.br}\\
Universidade Federal de Campina Grande\\
Unidade Acad\^emica de Matem\'atica\\
CEP:58429-900, Campina Grande - PB, Brazil.}

\date{}

\maketitle

{\scriptsize{\bf 2000 Mathematics Subject Classification:} 35A15, 35B09, 35B40, 35H30.}

{\scriptsize{\bf Keywords:} Variational Methods, Positive solutions, Asymptotic behavior of solutions, $p(x)$-Laplacian }

\begin{abstract}
   We establish the existence of multi-bump solutions for the following class of quasilinear problems
$$
   - \Delta_{ p(x) } u + \big( \lambda V(x) + Z(x) \big) u ^{ p(x)-1 }  = f(x,u) \text{ in } \mathbb R^N, \, u \ge 0 \text{ in } \mathbb R^N,
$$
where the nonlinearity $ f \colon \mathbb R^N \times \mathbb R \to \mathbb R $ is a continuous function having a subcritical growth and potentials $ V, Z \colon \mathbb R^N \to \mathbb R $ are continuous functions verifying some hypotheses. The main tool used is the variational method.
\end{abstract}


\section{Introduction}


In this paper, we considered the existence and multiplicity of solutions for the following class of problems
$$
   \big( P_\lambda \big) \;
	 \begin{cases}
		  - \Delta_{ p(x) } u + \big( \lambda V(x) + Z(x) \big) u^{ p(x)-1 }= f(x,u), \text{ in } \mathbb R^N, \\
		  u \ge 0, \text{ in } \mathbb R^N, \\
			u \in W^{ 1,p(x) } \big( \mathbb R^N \big),
		\end{cases}
$$
where $ \Delta_{ p(x) } $ is the $ p(x) $-Laplacian operator given by
$$
   \Delta_{ p(x) } u = \text{div} \left( \big| \nabla u \big|^{p(x)-2} \nabla u \right).
$$
Here, $ \lambda > 0 $ is a parameter, $ p \colon \mathbb R^N \to \mathbb R $ is a Lipschitz function, $ V,Z \colon \mathbb R^N \to \mathbb R $ are continuous functions with $ V \ge 0 $, and  $ f \colon \mathbb R^N \times \mathbb R \to \mathbb R $ is continuous having a subcritical growth. Furthermore, we take into account the following set of hypotheses:	

\begin{enumerate}
\item[($H_1$)] $ 1 < p_- \le p_+ < N $.
\item[($H_2$)] $ \Omega = \text{int } V^{ -1 } (0) \ne \emptyset $ and bounded, $ \overline{\Omega} = V^{ -1 }(0) $ and  $ \Omega $ can be decomposed in $ k $ connected components $ \Omega_1, \ldots, \Omega_k $ with $ \text{dist} \big( \Omega_i, \Omega_j \big) > 0, \, i \ne j $.
\item[($H_3$)] There exists $ M > 0 $ such that
$$
\lambda V(x) + Z(x) \ge M, \, \forall x \in \mathbb R^N, \lambda \ge 1.
$$
\item[($H_4$)] There exists $ K > 0 $ such that
$$
\big| Z(x) \big| \le K, \, \forall x \in \mathbb R^N.
$$
\item[($f_1$)] $$
\limsup_{ |t| \to \infty } \frac{|f(x,t)|}{|t|^{ q(x)-1 }} < \infty, \text{ uniformly in } x \in \mathbb R^N,
$$
where $ q \colon \mathbb R^N \to \mathbb R $ is continuous with $ p_+ < q_- $ and $ q \ll p^* $. 	
\item[($f_2$)] $ f(x,t) = o \big( |t|^{ p_+ - 1} \big), \, t \to 0, \text{ uniformly in } x \in \mathbb R^N $.
\item[($f_3$)] There exists $ \theta > p_+ $ such that
$$
0 < \theta F(x,t) \le f(x,t) t, \, \forall x \in \mathbb R^N, t > 0,
$$
where $ F(x,t) = \int_0^t f(x,s) \, ds $.
\item[($f_4$)] $ \dfrac{f(x,t)}{t^{p_+ - 1}} $ is strictly increasing in $ (0,\infty) $, for each $ x \in \mathbb R^N $. 
\item[($f_5$)] $ \forall a, b \in \mathbb R, \, a < b,  \, \displaystyle \sup_{ x \in \mathbb R^N \atop t \in [a,b] } |f(x,t)| < \infty $.						
\end{enumerate}

\vspace{0.3 cm}

A typical example of nonlinearity verifying $ (f_1)-(f_5) $  is
$$
   f(x,t) = |t|^{ q(x)-2 }t, \, \forall \, x \in \mathbb R^N \, \mbox{ and } \, \forall t \in \mathbb R,
$$
where $ p_+ < q_- $ and $ q \ll p^* $.

\vspace{0.3 cm}
Partial differential equations involving the $ p(x) $-Laplacian arise, for instance, as a mathematical model for problems involving electrorheological fluids and image restorations, see \cite{Acerbi1,Acerbi2,Antontsev,CLions,Chen,Ru}. This explains the intense research on this subject in the last decades. A lot of works, mainly treating nonlinearities with subcritical growth, are available (see \cite{Alves2,Alves3,AlvesBarreiro,AlvesSouto,AlvesFerreira,AlvesFerreira1,Fan1,Fan,FanZhao0,FZ1,FanShenZhao,BSS,FuZa,MR} for interesting works). Nevertheless, to the best of the author's knowledge, this is the first work dealing with multi-bump solutions for this class of problems.

The motivation to investigate problem $ \big( P_\lambda \big) $ in the setting of variable exponents has been the papers \cite{Alves} and \cite{DingTanaka}. In \cite{DingTanaka},  inspired by \cite{DelPinoFelmer} and \cite{Sere} the authors considered $ \big( P_\lambda \big) $ for $ p = 2 $ and $ f(u) = u^q $, $ q \in \big(1, \frac{N+2}{N-2} \big) $ if $ N \ge 3 $; $ q \in (1, \infty) $ if $ N = 1, 2 $. The authors showed that $ \big( P_\lambda \big) $ has at least $ 2^k-1 $ solutions $u_\lambda$ for large values of $ \lambda $. More precisely, one solution for each non-empty subset $ \Upsilon $ of $ \{ 1,\ldots,k \} $. Moreover, fixed $ \Upsilon \subset \{ 1,\ldots,k \}$, it was proved that, for any sequence $ \lambda_n \to \infty $ we can extract a subsequence $( \lambda_{n_i}) $ such that $( u_{ \lambda_{n_i} } )$ converges strongly in $ W^{ 1,p(x) } \big( \mathbb R^N \big) $ to a function $ u $, which satisfies $ u = 0 $ outside $ \Omega_\Upsilon = \bigcup_{ j \in \Upsilon } \Omega_j $ and $ u_{|_{\Omega_j}}, \, j \in \Upsilon $, is a least energy solution for
$$
   \begin{cases}
		  - \Delta u + Z(x) u  = u^q, \text{ in } \Omega_j, \\
		  u \in H^1_0 \big( \Omega_j \big), \, u > 0, \text{ in } \Omega_j.
	 \end{cases}
$$

In \cite{Alves}, employing some different arguments than those used in \cite{DingTanaka}, Alves extended the results described above to the $ p$-Laplacian operator, assuming that in $ \big( P_\lambda \big) $ the nonlinearity $ f $ possesses a subcritical growth and $ 2 \le p < N $. In particular, fixed $ \Upsilon \subset \{ 1,\ldots,k \}$, for any sequence $ \lambda_n \to \infty $ we can extract a subsequence $ (\lambda_{n_i}) $ such that $ (u_{ \lambda_{n_i} }) $ converges strongly in $ W^{ 1,p(x) } \big( \mathbb R^N \big) $ to a function $ u $, which satisfies $ u = 0 $ outside $ \Omega_\Upsilon $ and $ u_{|_{\Omega_j}}, \, j \in \Upsilon $, is a least energy solution for
$$
   \begin{cases}
		  - \Delta_p u + Z(x) u  = f(u), \text{ in } \Omega_j, \\
		  u \in W^{ 1,p }_0 \big( \Omega_j \big), \, u > 0, \text{ in } \Omega_j.
	 \end{cases}
$$

\vspace{.3cm}
In the present paper, we extend the results found in \cite{Alves} to the $ p(x)$-Laplacian operator. However, we would like emphasize that in a lot of estimates, we have used different arguments from that found in \cite{Alves}. The main difference is related to the fact that for equations involving the $p(x)$-Laplacian operator it is not clear that Moser's iteration method is a good tool to get the estimates for the $L^{\infty}$-norm. Here, we adapt some ideas explored in \cite{FanZhao0} and \cite{FuscoSbordone} to get these estimates. For more details see Section 5. 

Since we intend to find nonnegative solutions, throughout this paper, we replace $ f $ by $ f^+ \colon \Bbb R^N \times \Bbb R \to \Bbb R $ given by
$$
    f^+(x,t) =
		\begin{cases}
		   f(x,t), \text { if } t > 0 \\
			 \qquad \, 0, \text{ if } t \le 0.
		\end{cases}
$$
Nevertheless, for the sake of simplicity, we still write $ f $ instead of $ f^+ $.   

\vspace{0.5 cm}
The main theorem in this paper is the following:

\begin{theorem} \label{main}
   Assume that $ (H_1)-(H_4) $ and $ (f_1)-(f_5) $ hold. Then, there exist $ \lambda_0 > 0 $ with the following property: for any non-empty subset $ \Upsilon $ of $ \{1, 2, . . . , k \} $ and $ \lambda \ge \lambda_0 $,  problem $ \big( P_\lambda \big) $ has a solution $u_\lambda$. Moreover, if we fix the subset $ \Upsilon $, then for any sequence $ \lambda_n \to \infty $ we can extract a subsequence $ (\lambda_{n_i}) $ such that $ (u_{ \lambda_{n_i} }) $ converges strongly in $ W^{ 1,p(x) } \big( \mathbb R^N \big) $ to a function $ u $, which satisfies $ u = 0 $ outside $ \Omega_\Upsilon = \bigcup_{ j \in \Upsilon } \Omega_j $ and $ u_{|_{\Omega_j}}, \, j \in \Upsilon $, is a least energy solution for
$$
   \begin{cases}
		  - \Delta_{ p(x) } u + Z(x) u  = f(x,u), \text{ in } \Omega_j, \\
		  u \in W^{ 1,p(x) }_0 \big( \Omega_j \big), \, u \ge 0, \text{ in } \Omega_j.
	 \end{cases}
$$
\end{theorem}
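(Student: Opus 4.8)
The plan is to adapt the penalization method of del Pino and Felmer, in the multi-bump form developed by Ding--Tanaka and by Alves, to the $p(x)$-Laplacian. Fix a non-empty $\Upsilon\subseteq\{1,\dots,k\}$, write $\Omega_\Upsilon=\bigcup_{j\in\Upsilon}\Omega_j$, and choose a bounded open set $\Omega'$ with $\overline{\Omega_\Upsilon}\subset\Omega'$ and $\overline{\Omega'}\cap\Omega_i=\emptyset$ for $i\notin\Upsilon$. Using $(f_4)$, pick $a>0$ with $f(x,a)/a^{p(x)-1}\le M/\ell$ for a fixed constant $\ell>1$ chosen large and all $x$, define a truncated nonlinearity $\tilde f(x,t)=f(x,t)$ for $t\le a$ and $\tilde f(x,t)=(M/\ell)\,t^{p(x)-1}$ for $t\ge a$, and set $g=\chi_{\Omega'}f+(1-\chi_{\Omega'})\tilde f$, with primitive $G$. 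The modified functional
$$
I_\lambda(u)=\int_{\mathbb R^N}\frac1{p(x)}\Big(|\nabla u|^{p(x)}+(\lambda V+Z)|u|^{p(x)}\Big)\,dx-\int_{\mathbb R^N}G(x,u)\,dx
$$
is $C^1$ on the Banach space $E_\lambda=\{u\in W^{1,p(x)}(\mathbb R^N):\int(\lambda V+Z)|u|^{p(x)}\,dx<\infty\}$ with the natural norm; by $(H_3)$, $E_\lambda\hookrightarrow W^{1,p(x)}(\mathbb R^N)$ continuously, with embedding constant independent of $\lambda\ge1$.

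By $(f_2)$ and $(f_3)$, $I_\lambda$ has the mountain-pass geometry uniformly in $\lambda\ge1$, and since $(f_3)$ holds on $\Omega'$ while $\theta G\le g\cdot t$ fails only by a controlled amount off $\Omega'$ (using $M/\ell<M$ from $(H_3)$), Cerami sequences are bounded in $E_\lambda$. The role of $\lambda$ is in compactness: for a bounded Cerami sequence at level $c$, the term $\lambda\int V|u|^{p(x)}\,dx$ forces the mass to concentrate in $\Omega'\cup\{V=0\}$ as $\lambda$ grows; combining the compact embedding of $W^{1,p(x)}$ on the bounded set $\Omega'$ with the coercivity $(H_3)$ on the complement and the decay built into $\tilde f$, one gets that $I_\lambda$ satisfies $(PS)_c$ for every $c$ below an explicit threshold $c^*$, for all $\lambda\ge1$.

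For each $j$ let $c_j>0$ be the mountain-pass level of $I_j(u)=\int_{\Omega_j}\frac1{p(x)}(|\nabla u|^{p(x)}+Z|u|^{p(x)})\,dx-\int_{\Omega_j}F(x,u)\,dx$ on $W^{1,p(x)}_0(\Omega_j)$; since $\Omega_j$ is bounded and $f$ satisfies $(f_1)$--$(f_4)$, the embedding is compact and $c_j$ is attained by a positive least energy solution. Put $c_\Upsilon=\sum_{j\in\Upsilon}c_j$. Following the scheme of Alves one constructs, from truncations of the least energy solutions on the $\Omega_j$, a $|\Upsilon|$-dimensional minimax family whose level $c_{\lambda,\Upsilon}$ satisfies $c_{\lambda,\Upsilon}\to c_\Upsilon$ as $\lambda\to\infty$ and $c_{\lambda,\Upsilon}<c^*$ for $\lambda$ large; together with a deformation/linking argument that keeps critical points away from the wrong components, $(PS)_{c_{\lambda,\Upsilon}}$ yields a nonnegative critical point $u_\lambda$ of $I_\lambda$ at level $c_{\lambda,\Upsilon}$ for all $\lambda\ge\lambda_0$. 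Letting $\lambda_n\to\infty$, boundedness of $(u_{\lambda_n})$ in $W^{1,p(x)}(\mathbb R^N)$ gives, along a subsequence, a strong limit $u$ with $u=0$ on $\mathbb R^N\setminus\overline\Omega$; comparing energies with $c_\Upsilon$ forces $u=0$ outside $\overline{\Omega_\Upsilon}$, $u|_{\Omega_j}\in W^{1,p(x)}_0(\Omega_j)$ solves the limit equation, and $\sum_{j\in\Upsilon}I_j(u|_{\Omega_j})=c_\Upsilon$ with each $I_j(u|_{\Omega_j})\le c_j$, so each $u|_{\Omega_j}$ is a least energy solution.

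It remains to verify that for $\lambda$ large $u_\lambda$ solves the \emph{original} problem $(P_\lambda)$, i.e. $u_\lambda\le a$ on $\mathbb R^N\setminus\Omega'$, so that $g(x,u_\lambda)=f(x,u_\lambda)$ everywhere. This is the main obstacle. One needs a uniform $L^\infty$ bound for $u_\lambda$ on a neighborhood of the far region; since Moser iteration does not adapt cleanly to the $p(x)$-Laplacian, the argument instead runs a De Giorgi-type truncation/Caccioppoli estimate in the spirit of Fan--Zhao and Fusco--Sbordone (Section 5), carefully tracking the dependence of the constants on the bounded exponent $p(\cdot)$ and on $\lambda$. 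Combined with the fact that $\|u_\lambda\|$ restricted to any fixed open set disjoint from $\overline\Omega$ tends to $0$ as $\lambda\to\infty$ (from $I_\lambda(u_\lambda)=c_{\lambda,\Upsilon}\to c_\Upsilon$ and the coercivity $(H_3)$), this gives $\|u_\lambda\|_{L^\infty(\mathbb R^N\setminus\Omega')}<a$ for $\lambda\ge\lambda_0$, completing the proof. A secondary difficulty throughout is that, because the functional is not homogeneous in $u$, the usual fibering/scaling in $t$ must be replaced by monotonicity arguments from $(f_4)$ and by careful bookkeeping of modular versus norm quantities.
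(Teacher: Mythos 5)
Your proposal follows essentially the same route as the paper: the del Pino--Felmer penalization in the multi-bump form of Ding--Tanaka and Alves, the $|\Upsilon|$-dimensional minimax with levels converging to $c_\Upsilon$, a quantitative deformation keeping the flow inside $\mathcal A^\lambda_\mu$, and a De Giorgi--type iteration (Fan--Zhao / Fusco--Sbordone) for the uniform $L^\infty$ bound away from $\Omega'_\Upsilon$. The only minor discrepancies are cosmetic: the paper works with Palais--Smale rather than Cerami sequences and proves $(PS)$ at \emph{all} levels for the truncated functional $\phi_\lambda$ (no threshold $c^*$ is needed, since the argument about what $u$ must look like as $\lambda\to\infty$ is handled separately by the $(PS)_\infty$ analysis); and the truncation level $a$ is taken to be an $x$-dependent function $a(x)$ constructed via $(f_2)$--$(f_3)$ rather than a fixed constant justified by $(f_4)$, though the single-constant variant works just as well under $(f_2)$.
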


\vspace{0.6 cm}

\noindent {\bf Notations:} The following notations will be used in the present work: \\

\noindent $\bullet$ \,\, $C$ and $C_i$ will denote generic positive constant, which may vary from line to line; \\

\noindent $\bullet$ \,\, In all the integrals we omit the symbol $ dx $. \\

\noindent $\bullet$ \,\, If $u$ is a mensurable function, we denote $u^+$ and $u^-$ its positive and negative part, i.e., $u^+(x) = \max\{ u(x), 0 \}$ and $ u^-(x) = \min\{ u(x), 0 \}. $ \\

\noindent $\bullet$ \,\, For $ u,v \in C \big( \mathbb R^N \big) $, the notation $ u  \ll  v $  means that $ \displaystyle \inf_{ x \in \mathbb R^N } \big( v(x)-u(x) \big) > 0 $, $ u_- = \displaystyle \inf_{ x \in \mathbb R^N } u(x) $. Moreover, we will denote by $u^*$ the function   
$$
  u^*(x) =
	\begin{cases}
	   \frac{Nu(x)}{N-u(x)}, \text{ if } u(x) < N, \\
		 \ \ \quad \infty, \text{ if } u(x) \ge N.
	\end{cases}
$$


\section{Preliminaries on variable exponents Lebesgue and Sobolev spaces}


In this section, we recall some results on variable exponents Lebesgue and Sobolev spaces found in \cite{AlvesFerreira1,FZ,FanShenZhao} and their references.   

Let $ h \in L^\infty \big( \mathbb R^N \big) $ with $ h_- = \text{ess} \displaystyle \inf_{ \! \! \! \! \! \mathbb R^N} h \ge 1$. The \emph{variable exponent Lebesgue space} $ L^{ h(x) } \big( \mathbb R^N \big) $ is defined by
\[
    L^{ h(x) } \big( \mathbb R^N \big) = \left\{ u \colon \mathbb R^N \to \mathbb R \, ; \, u \text{ is measurable and } \int_{ \mathbb R^N} \left\vert u \right\vert^{ h(x) } < \infty \right\},
\]
endowed with the norm
\[
    \left\vert u \right\vert _{ h(x) } = \inf \left\{ \lambda > 0 \, ; \, \int_{ \mathbb R^N } \left\vert \frac{u}{\lambda} \right\vert^{ h(x) } \le 1 \right\}.
\]
The \emph{variable exponent Sobolev space} is defined by
\[
   W^{ 1,h(x) } \big( \mathbb R^N \big) = \left\{  u \in L^{ h(x) } \big( \mathbb R^N \big) \, ; \, \big\vert \nabla u \big\vert \in L^{ h(x) } \big( \mathbb R^N \big) \right\},
\]
with the norm
\[
   \left\Vert u \right\Vert _{ 1,h(x) } = \inf \left\{ \lambda > 0 \, ; \, \int_{ \mathbb R^N } \left( \left\vert \frac{\nabla u}{\lambda} \right\vert^{ h(x) } + \left\vert \frac{u}{\lambda} \right\vert^{ h(x) } \right) \le 1 \right\}.
\]
If $ h_- > 1 $, the spaces $ L^{ h(x) } \big( \mathbb R^N \big) $ and $ W^{ 1,h(x) } \big( \mathbb R^N \big) $ are separable and reflexive with these norms.

We are mainly interested in subspaces of $ W^{ 1,h(x) } \big( \mathbb R^N \big) $ given by
$$
   E_W = \left\{ u \in W^{ 1,h(x) } \big( \mathbb R^N \big) \, ; \, \int_{ \mathbb R^N } W(x) |u|^{ h(x) } < \infty \right\},
$$
where $ W \in C \big( \mathbb R^N \big) $ such that $ W_- > 0 $. Endowing $ E_W $ with the norm
\[
   \left\Vert u \right\Vert _W = \inf \left\{ \lambda > 0 \, ; \, \int_{ \mathbb R^N } \left( \left\vert \frac{\nabla u}{\lambda} \right\vert^{ h(x) } + W(x)\left\vert \frac{u}{\lambda} \right\vert^{ h(x) } \right) \le 1 \right\},
\]
$ E_W $ is a Banach space. Moreover, it is easy to see that $ E_W \hookrightarrow W^{ 1,h(x) } \big( \mathbb R^N \big) $ continuously. In addition, we can show that $ E_W $ is reflexive. For the reader's convenience, we recall some basic results.

\begin{proposition} \label{normxmodular}
   The functional $ \varrho \colon E_W \to \mathbb R $ defined by
\begin{equation}
   \varrho(u) = \int_{ \mathbb R^N } \left(  \big\vert \nabla u \big\vert^{ h(x) } + W(x) \left\vert u \right\vert^{ h(x) } \right),
\end{equation}
has the following properties:
\begin{enumerate}
    \item[(i)] If $ \left\Vert u \right\Vert_W \ge 1 $, then $ \left\Vert u \right\Vert_W^{ h_- } \le \varrho(u) \le \left\Vert u\right\Vert_W^{ h_+ } $.
    \item[(ii)] If $ \left\Vert u \right\Vert_W \le 1$, then $ \left\Vert u \right\Vert_W^{ h_+ }\le \varrho(u) \le \left\Vert u \right\Vert_W^{h_- }$.
\end{enumerate}
In particular, for a sequence $ (u_n) $ in $ E_W $,
\begin{gather*}
   \left\Vert u_n \right\Vert_W \rightarrow 0 \iff \varrho(u_n) \rightarrow 0, \ \text{and}, \\
   (u_n) \ \text{is bounded in} \ E_W \iff \varrho(u_n) \ \text{is bounded in} \ \mathbb R.
\end{gather*}
\end{proposition}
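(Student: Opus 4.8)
The plan is to derive everything from the elementary scaling behaviour of $\lambda \mapsto \varrho(u/\lambda)$ together with the fact that the norm is attained on the modular unit sphere. First I would record the pointwise inequalities for $\lambda>0$ and $x\in\mathbb R^N$: since $h_-\le h(x)\le h_+$ and $s\mapsto\lambda^{s}$ is monotone, one has $\lambda^{-h_+}\le\lambda^{-h(x)}\le\lambda^{-h_-}$ when $\lambda\ge 1$, and $\lambda^{-h_-}\le\lambda^{-h(x)}\le\lambda^{-h_+}$ when $0<\lambda\le 1$. Multiplying by the nonnegative quantity $|\nabla u(x)|^{h(x)}+W(x)|u(x)|^{h(x)}$, integrating, and using that
\begin{equation*}
\varrho\!\left(\frac{u}{\lambda}\right)=\int_{\mathbb R^N}\lambda^{-h(x)}\Bigl(|\nabla u|^{h(x)}+W(x)|u|^{h(x)}\Bigr),
\end{equation*}
we obtain $\lambda^{-h_+}\varrho(u)\le\varrho(u/\lambda)\le\lambda^{-h_-}\varrho(u)$ for $\lambda\ge 1$, and $\lambda^{-h_-}\varrho(u)\le\varrho(u/\lambda)\le\lambda^{-h_+}\varrho(u)$ for $0<\lambda\le 1$.

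Next, for $u\ne 0$ I would show that $\varrho(u/\|u\|_W)=1$. Put $g(\lambda):=\varrho(u/\lambda)$; it is finite on $(0,\infty)$ because $u\in E_W$, continuous there by dominated convergence (on any interval $[\alpha,\beta]\subset(0,\infty)$ the factor $\lambda^{-h(x)}$ is bounded by a constant depending only on $\alpha$), strictly decreasing because $\lambda^{-h(x)}$ is strictly decreasing in $\lambda$ on the set $\{|\nabla u|+|u|>0\}$, and satisfies $g(\lambda)\to\infty$ as $\lambda\to 0^+$ and $g(\lambda)\to 0$ as $\lambda\to\infty$ (monotone, resp.\ dominated convergence, using $W_->0$ and $u\ne 0$). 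Hence $\{\lambda>0:g(\lambda)\le 1\}=[\|u\|_W,\infty)$ and $g(\|u\|_W)=1$, which also identifies $\|u\|_W$ as the unique positive root of $g(\lambda)=1$.

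Items (i) and (ii) then follow by substituting $\lambda=\|u\|_W$ into the appropriate pair of inequalities: if $\|u\|_W\ge 1$ we get $\|u\|_W^{-h_+}\varrho(u)\le 1\le\|u\|_W^{-h_-}\varrho(u)$, i.e.\ $\|u\|_W^{h_-}\le\varrho(u)\le\|u\|_W^{h_+}$; if $0<\|u\|_W\le 1$ the other pair gives $\|u\|_W^{h_+}\le\varrho(u)\le\|u\|_W^{h_-}$, while the case $u=0$ is the trivial identity $0=0$. Finally the two equivalences for sequences are bookkeeping consequences of (i)--(ii): if $\|u_n\|_W\to 0$ then eventually $\|u_n\|_W\le 1$, so $\varrho(u_n)\le\|u_n\|_W^{h_-}\to 0$; conversely $\varrho(u_n)\to 0$ forces $\|u_n\|_W<1$ eventually (otherwise $\varrho(u_n)\ge\|u_n\|_W^{h_-}\ge 1$ by (i)), and then $\|u_n\|_W\le\varrho(u_n)^{1/h_+}\to 0$; the boundedness statement is proved the same way, splitting the sequence according to whether $\|u_n\|_W$ is $\ge 1$ or $\le 1$ and applying the matching one-sided bound. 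The only genuinely delicate step is the attainment identity $\varrho(u/\|u\|_W)=1$, i.e.\ the continuity and the two limiting behaviours of $g$; once $u\in E_W$ is used this is a standard dominated/monotone convergence argument, and the rest is arithmetic with the exponent inequalities.
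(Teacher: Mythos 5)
Your proof is correct. The paper itself does not prove this proposition but simply recalls it as a standard fact about variable exponent spaces, citing the references \cite{AlvesFerreira1,FZ,FanShenZhao}; your argument is the standard one found there. The two key steps are exactly right: first, the scaling inequalities
\[
\lambda^{-h_+}\varrho(u)\le\varrho(u/\lambda)\le\lambda^{-h_-}\varrho(u)\quad(\lambda\ge 1),\qquad
\lambda^{-h_-}\varrho(u)\le\varrho(u/\lambda)\le\lambda^{-h_+}\varrho(u)\quad(0<\lambda\le 1),
\]
obtained by bounding $\lambda^{-h(x)}$ pointwise; second, the attainment identity $\varrho(u/\|u\|_W)=1$ for $u\ne 0$, which you justify carefully by showing $g(\lambda)=\varrho(u/\lambda)$ is finite, continuous, strictly decreasing, and tends to $\infty$ and $0$ at the endpoints (all of which use $u\in E_W$ and $W_->0$ as you note). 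Substituting $\lambda=\|u\|_W$ then gives (i) and (ii), and the sequential statements follow by the case-splitting you describe. The only place one might want a touch more care in a written version is the observation that strict monotonicity of $g$ requires the set $\{|\nabla u|+|u|>0\}$ to have positive measure, but you do flag that $u\ne 0$ is being used, so this is fine.
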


\begin{remark}
   For the functional $ \varrho_{ h(x) } \colon L^{ h(x) } \big( \mathbb R^N \big) \rightarrow \mathbb R $ given by
\[
   \varrho_{ h(x) }(u) = \int_{ \mathbb R^N } \left\vert u \right\vert^{ h(x) },
\]
the same conclusion of Proposition \ref{normxmodular} also holds.
\end{remark}

\begin{proposition} \label{estimate0}
   Let $ m \in L^\infty \big( \mathbb R^N \big) $ with $ 0 < m_- \le m(x) \le h(x) \text{ for a.e. } x \in \mathbb R^N $. If $ u \in L^{ h(x) } \big( \mathbb R^N \big) $, then $ |u|^{ m(x) } \in L^{ \frac{h(x)}{m(x)} } \big( \mathbb R^N \big) $ and
$$
	 \left| |u|^{ m(x) } \right|_{ \frac{h(x)}{m(x)} } \le \max \left\{ |u|_{ h(x) }^{ m_- }, |u|_{ h(x) }^{ m_+ } \right\} \le |u|_{ h(x) }^{ m_- } + |u|_{ h(x) }^{ m_+ }.
$$
\end{proposition}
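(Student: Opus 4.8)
The plan is to reduce the statement to the definition of the Luxemburg norm as an infimum, together with the elementary modular inequality $\int_{\mathbb R^N}\big|u/|u|_{h(x)}\big|^{h(x)}\le 1$, valid whenever $|u|_{h(x)}>0$ (this follows from the definition of the norm together with monotone convergence, choosing $\lambda_n\downarrow|u|_{h(x)}$ with $\int|u/\lambda_n|^{h(x)}\le 1$ and letting $n\to\infty$). First I would dispose of the membership claim: since $0<m_-\le m(x)\le h(x)$ a.e., the exponent $h(x)/m(x)$ is well defined and $\ge 1$ a.e., and $\big(|u|^{m(x)}\big)^{h(x)/m(x)}=|u|^{h(x)}$, so $\int_{\mathbb R^N}\big(|u|^{m(x)}\big)^{h(x)/m(x)}=\int_{\mathbb R^N}|u|^{h(x)}<\infty$; hence $|u|^{m(x)}\in L^{h(x)/m(x)}\big(\mathbb R^N\big)$.

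For the norm bound, set $\lambda:=|u|_{h(x)}$ and $\mu:=\max\{\lambda^{m_-},\lambda^{m_+}\}$. If $\lambda=0$ then $u=0$ a.e., so (using $m_->0$) $|u|^{m(x)}=0$ a.e. and the inequality is trivial; assume $\lambda>0$. By the definition of $|\cdot|_{h(x)/m(x)}$ it suffices to verify
$$
   \int_{\mathbb R^N}\left(\frac{|u|^{m(x)}}{\mu}\right)^{\frac{h(x)}{m(x)}}\le 1,\qquad\text{i.e.}\qquad\int_{\mathbb R^N}\frac{|u|^{h(x)}}{\mu^{h(x)/m(x)}}\le 1 .
$$
The key pointwise estimate is $\mu^{h(x)/m(x)}\ge\lambda^{h(x)}$ a.e., equivalently $\mu\ge\lambda^{m(x)}$ a.e. This is where a case split enters: if $\lambda\ge 1$ then $\lambda^{m(x)}\le\lambda^{m_+}\le\mu$ since $m(x)\le m_+$; if $0<\lambda<1$ then $\lambda^{m(x)}\le\lambda^{m_-}\le\mu$ since $m(x)\ge m_-$. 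In either case $\mu\ge\lambda^{m(x)}\ge 0$, and raising to the positive power $h(x)/m(x)$ gives $\mu^{h(x)/m(x)}\ge\big(\lambda^{m(x)}\big)^{h(x)/m(x)}=\lambda^{h(x)}$. Therefore
$$
   \int_{\mathbb R^N}\frac{|u|^{h(x)}}{\mu^{h(x)/m(x)}}\le\int_{\mathbb R^N}\frac{|u|^{h(x)}}{\lambda^{h(x)}}=\int_{\mathbb R^N}\left|\frac{u}{\lambda}\right|^{h(x)}\le 1 ,
$$
which yields $\big||u|^{m(x)}\big|_{h(x)/m(x)}\le\mu=\max\{\lambda^{m_-},\lambda^{m_+}\}$. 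The last inequality $\max\{\lambda^{m_-},\lambda^{m_+}\}\le\lambda^{m_-}+\lambda^{m_+}$ is immediate since both terms are nonnegative.

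The argument presents no genuine obstacle; the only point requiring a little care is the pointwise comparison $\mu\ge\lambda^{m(x)}$, which forces the distinction between $\lambda\ge 1$ and $0<\lambda<1$ (and correspondingly makes the exponent $m_+$ versus $m_-$ relevant), together with the harmless degenerate case $u\equiv 0$ that must be treated separately because of the definition of the norm. Everything else is bookkeeping with the infimum defining the Luxemburg norm.
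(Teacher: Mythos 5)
The paper merely recalls this proposition as a standard fact about variable exponent Lebesgue spaces and gives no proof, so there is no internal argument to compare against; your proof is correct and is the natural one. You reduce to the defining modular inequality $\int_{\mathbb R^N}\big|u/|u|_{h(x)}\big|^{h(x)}\le 1$, use $\big(|u|^{m(x)}\big)^{h(x)/m(x)}=|u|^{h(x)}$ for membership, and then verify the modular bound at $\mu=\max\{\lambda^{m_-},\lambda^{m_+}\}$ through the pointwise comparison $\mu\ge\lambda^{m(x)}$, correctly splitting on $\lambda\ge1$ versus $0<\lambda<1$ and treating the degenerate case $\lambda=0$ separately; all steps are sound.
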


\vspace{.3cm}
Related to the Lebesgue space $ L^{ h(x) } \big( \mathbb R^N \big) $, we have the following generalized H\"{o}lder's inequality.

\begin{proposition} [H\"older's inequality] \label{Holder's inequality}
   If $ h_- > 1 $, let $ h' \colon \mathbb R^N \to \mathbb R $ such that
$$
   \frac{1}{h( x)} + \frac{1}{h'(x)} = 1 \text{ for a.e. } x \in \mathbb R^N.
$$
Then, for any $ u\in L^{ h( x) } \big( \mathbb R^N \big) $ and  $ v \in L^{ h'(x) } \big( \mathbb R^N \big) $,
$$
   \int_{\mathbb R^N} | uv | \, dx  \le \left( \frac{1}{h_-} + \frac{1}{h'_-} \right) |u| _{ h( x) } |v|_{ h'(x) }.
$$
\end{proposition}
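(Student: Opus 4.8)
The plan is to reduce everything to the pointwise Young inequality after normalizing $u$ and $v$. If $|u|_{h(x)} = 0$ or $|v|_{h'(x)} = 0$, then $u = 0$ or $v = 0$ a.e. and the inequality is trivial, so I may assume both norms are strictly positive. Set $\tilde u = u / |u|_{h(x)}$ and $\tilde v = v / |v|_{h'(x)}$. The first step is to establish the unit-ball property of the Luxemburg norm, namely $\varrho_{h(x)}(\tilde u) = \int_{\mathbb R^N} |\tilde u|^{h(x)} \le 1$ and likewise $\varrho_{h'(x)}(\tilde v) \le 1$. This follows from the definition of the norm as an infimum: for every $\lambda > |u|_{h(x)}$ one has $\int_{\mathbb R^N} |u/\lambda|^{h(x)} \le 1$, and as $\lambda \downarrow |u|_{h(x)}$ the integrands increase to $|\tilde u|^{h(x)}$, so monotone convergence gives the claim; same for $v$. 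Note that $h' \in L^\infty(\mathbb R^N)$ with $1 < h'_- \le h'_+ < \infty$ is automatic from $h_- > 1$ and $h \in L^\infty(\mathbb R^N)$, so all the exponents involved lie genuinely in $(1,\infty)$ for a.e. $x$.

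The second step is the pointwise estimate. For a.e. $x$ the exponents $h(x), h'(x) \in (1,\infty)$ are conjugate, so Young's inequality applied to $|\tilde u(x)|$ and $|\tilde v(x)|$ gives
$$
|\tilde u(x)\,\tilde v(x)| \le \frac{|\tilde u(x)|^{h(x)}}{h(x)} + \frac{|\tilde v(x)|^{h'(x)}}{h'(x)} .
$$
Integrating over $\mathbb R^N$ and using the crude bounds $1/h(x) \le 1/h_-$ and $1/h'(x) \le 1/h'_-$ pointwise, together with Step 1, I obtain
$$
\int_{\mathbb R^N} |\tilde u\,\tilde v| \le \frac{1}{h_-}\int_{\mathbb R^N} |\tilde u|^{h(x)} + \frac{1}{h'_-}\int_{\mathbb R^N} |\tilde v|^{h'(x)} \le \frac{1}{h_-} + \frac{1}{h'_-} .
$$
In particular $\tilde u\,\tilde v \in L^1(\mathbb R^N)$, hence so is $uv$, which also settles the (implicit) integrability claim.

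Finally, I undo the normalization. Since $\int_{\mathbb R^N} |uv| = |u|_{h(x)}\,|v|_{h'(x)}\int_{\mathbb R^N} |\tilde u\,\tilde v|$, the bound from Step 2 yields exactly
$$
\int_{\mathbb R^N} |uv|\, dx \le \left( \frac{1}{h_-} + \frac{1}{h'_-} \right) |u|_{h(x)}\,|v|_{h'(x)},
$$
which is the assertion. The only genuinely delicate point is Step 1, i.e.\ that the modular of the normalized function does not exceed $1$; everything else is Young's inequality plus the estimates $1/h(x) \le 1/h_-$ and $1/h'(x) \le 1/h'_-$. If one prefers to sidestep the limiting argument in Step 1, one can instead fix arbitrary $\lambda_u > |u|_{h(x)}$ and $\lambda_v > |v|_{h'(x)}$, run Steps 2--3 with $u/\lambda_u$ and $v/\lambda_v$ in place of $\tilde u,\tilde v$ (so the right-hand side becomes $(\tfrac{1}{h_-}+\tfrac{1}{h'_-})\lambda_u\lambda_v$), and then let $\lambda_u \downarrow |u|_{h(x)}$ and $\lambda_v \downarrow |v|_{h'(x)}$.
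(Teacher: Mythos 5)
Your proof is correct. The paper itself does not prove this proposition --- it is recalled as a known preliminary from the variable-exponent literature (Fan--Zhao and related references) --- and your argument is precisely the standard one used there: reduce to the normalized functions, verify the unit-ball property of the Luxemburg norm (your monotone-convergence justification of $\varrho_{h(x)}(\tilde u)\le 1$ is the right way to handle the infimum), apply pointwise Young's inequality for the a.e.\ conjugate exponents $h(x), h'(x)\in(1,\infty)$, and use the crude bounds $1/h(x)\le 1/h_-$ and $1/h'(x)\le 1/h'_-$ before undoing the normalization. The treatment of the degenerate case of vanishing norms and the observation that $1<h'_-\le h'_+<\infty$ follows from $h_->1$ and $h\in L^\infty$ are both fine, so there is nothing to add.
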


\vspace{.3cm}
We can define \emph{variable exponent Lebesgue spaces with vector values}. We say $ u = ( u_1, \ldots, u_L ) \colon \mathbb R^N \to \mathbb R^L \in L^{ h(x) } \big( \mathbb R^N, \mathbb R^L \big) $ if, and only if, $ u_i \in L^{ h(x) } \big( \mathbb R^N \big) $, for $ i = 1, \ldots, L $. On $ L^{ h(x) } \big( \mathbb R^N, \mathbb R^L \big) $, we consider the norm $ \displaystyle | u |_{ L^{ h(x) }( \mathbb R^N, \mathbb R^L) } = \sum_{i=1}^L | u_i |_{ h(x) } $.

We state below lemmas of Brezis-Lieb type. The proof of the two first results follows the same arguments explored at \cite{Kavian}, while the proof of the latter can be found at \cite{AlvesFerreira1}.

\begin{proposition} [Brezis-Lieb lemma, first version] \label{first Brezis-Lieb}
   Let $ \left( u_n \right) $ be a bounded sequence in $ L^{ h(x) } \big( \mathbb R^N, \mathbb R^L \big) $  such that $ u_n(x) \to u(x) \text{ for a.e. } x \in \mathbb R^N $.
Then, $ u \in L^{ h(x) } \big( \mathbb R^N, \mathbb R^L \big) $ and
\begin{equation}
   \int_{ \mathbb R^N } \left| \left| u_n \right|^{ h(x) } - \left| u_n - u \right|^{ h(x) } - \left| u \right|^{ h(x) } \right| \, dx = o_n(1).
\end{equation}
\end{proposition}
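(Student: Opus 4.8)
The plan is to mimic the classical proof of the Brezis--Lieb lemma, the only extra care being that the exponent $t=h(x)$ now varies with $x$, although it stays confined to the compact interval $[h_-,h_+]\subset[1,\infty)$, exactly as in \cite{Kavian}.

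First I would check that $u\in L^{h(x)}\big(\mathbb R^N,\mathbb R^L\big)$. Since $(u_n)$ is bounded in $L^{h(x)}\big(\mathbb R^N,\mathbb R^L\big)$, each component is bounded in $L^{h(x)}\big(\mathbb R^N\big)$, so by the Remark following Proposition \ref{normxmodular} the modulars $\int_{\mathbb R^N}|u_n^{(i)}|^{h(x)}$ are bounded; using $(\sum_{i=1}^L a_i)^{t}\le L^{h_+-1}\sum_{i=1}^L a_i^{t}$ for $t\in[1,h_+]$, this yields $\int_{\mathbb R^N}|u_n|^{h(x)}\le C$. As $u_n(x)\to u(x)$ a.e., we have $|u_n(x)|^{h(x)}\to|u(x)|^{h(x)}$ a.e., so Fatou's lemma gives $\int_{\mathbb R^N}|u|^{h(x)}\le\liminf_n\int_{\mathbb R^N}|u_n|^{h(x)}<\infty$, i.e. $u\in L^{h(x)}\big(\mathbb R^N,\mathbb R^L\big)$. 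The same bound, combined with $|a-b|^{t}\le 2^{h_+-1}(|a|^{t}+|b|^{t})$ for $t\in[1,h_+]$, shows that $\int_{\mathbb R^N}|u_n-u|^{h(x)}\le C'$ uniformly in $n$.

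The core is the elementary inequality: for every $\varepsilon>0$ there exists $C_\varepsilon>0$, independent of $x$, such that
$$
\bigl||a+b|^{t}-|a|^{t}-|b|^{t}\bigr|\le\varepsilon\,|a|^{t}+C_\varepsilon\,|b|^{t}\qquad\text{for all }a,b\in\mathbb R^L,\ t\in[h_-,h_+].
$$
This is obtained by splitting into the cases $|b|\le\delta|a|$ (use $\bigl||a+b|^{t}-|a|^{t}\bigr|\le t(|a|+|b|)^{t-1}|b|\le h_+(1+\delta)^{h_+-1}\delta\,|a|^{t}$ and pick $\delta$ small) and $|b|>\delta|a|$ (bound everything crudely by a multiple of $|b|^{t}$); the constants can be taken uniform over the compact range $[h_-,h_+]$. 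Setting $g_n:=u_n-u\to0$ a.e. and
$$
W_n^\varepsilon(x):=\Bigl(\bigl||u_n(x)|^{h(x)}-|g_n(x)|^{h(x)}-|u(x)|^{h(x)}\bigr|-\varepsilon\,|g_n(x)|^{h(x)}\Bigr)^{+},
$$
the inequality (with $a=g_n$, $b=u$) gives $0\le W_n^\varepsilon(x)\le C_\varepsilon|u(x)|^{h(x)}\in L^{1}\big(\mathbb R^N\big)$, while $W_n^\varepsilon(x)\to0$ a.e. because $g_n\to0$ a.e.; hence the dominated convergence theorem yields $\int_{\mathbb R^N}W_n^\varepsilon\to0$. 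Since $\bigl||u_n|^{h(x)}-|g_n|^{h(x)}-|u|^{h(x)}\bigr|\le W_n^\varepsilon+\varepsilon|g_n|^{h(x)}$, we get
$$
\limsup_{n\to\infty}\int_{\mathbb R^N}\bigl||u_n|^{h(x)}-|u_n-u|^{h(x)}-|u|^{h(x)}\bigr|\le\varepsilon\,\limsup_{n\to\infty}\int_{\mathbb R^N}|u_n-u|^{h(x)}\le\varepsilon\,C',
$$
and letting $\varepsilon\to0$ concludes the proof. The only point that is not entirely routine is verifying that $C_\varepsilon$ can be chosen independent of $x$, which is precisely where the boundedness $h_+<\infty$ of the exponent is used; everything else is the standard Fatou / pointwise domination / dominated convergence scheme.
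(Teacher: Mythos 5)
Your proof is correct and takes essentially the same approach the paper intends: the paper simply cites Kavian's classical Brezis--Lieb argument, and your proof is exactly that argument, with the only added observation (which you correctly isolate) that the constant $C_\varepsilon$ in the elementary inequality $\bigl||a+b|^{t}-|a|^{t}-|b|^{t}\bigr|\le\varepsilon|a|^{t}+C_\varepsilon|b|^{t}$ can be chosen uniformly over $t\in[h_-,h_+]$ because the exponent is bounded.
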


\begin{proposition} [Brezis-Lieb lemma, second version] \label{second Brezis-Lieb}
   Let $ \left( u_n \right) $ be a bounded se\-quen\-ce in $ L^{ h(x) } \big( \mathbb R^N, \mathbb R^L \big) $ with $ h_- > 1 $ and $ u_n(x) \to u(x) \text{ for a.e. } x \in \mathbb R^N
$. Then
$$
   u_n \rightharpoonup u \text{ in } L^{ h(x) } \big( \mathbb R^N, \mathbb R^L \big).
$$
\end{proposition}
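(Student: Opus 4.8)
\emph{Proof sketch.} The plan is to combine reflexivity of the space with an equi\-integrability estimate, and then pass from subsequences to the full sequence via the subsequence principle. To begin, since $(u_n)$ is bounded and $u_n\to u$ a.e., Proposition \ref{first Brezis-Lieb} (or simply Fatou's lemma applied to the modular $\varrho_{h(x)}$) ensures $u\in L^{h(x)}\big(\mathbb R^N,\mathbb R^L\big)$, so the assertion is meaningful. Moreover $h\in L^\infty(\mathbb R^N)$ with $h_->1$ forces the conjugate exponent $h'(x)=h(x)/(h(x)-1)$ to satisfy $1<h'_-\le h'_+<\infty$; hence $L^{h(x)}(\mathbb R^N)$ and $L^{h'(x)}(\mathbb R^N)$ are reflexive, and so is the finite product $L^{h(x)}\big(\mathbb R^N,\mathbb R^L\big)$. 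Recall also that weak convergence in this product space is equivalent to componentwise weak convergence in $L^{h(x)}(\mathbb R^N)$.

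It suffices to show that every subsequence of $(u_n)$ has a further subsequence converging weakly to $u$; the conclusion then follows from the standard characterization of convergence in a topological space by subsequences. So fix an arbitrary subsequence. Being bounded in the reflexive space $L^{h(x)}\big(\mathbb R^N,\mathbb R^L\big)$, by the Eberlein--\v Smulian theorem it admits a further subsequence, still denoted $(u_{n_k})$, with $u_{n_k}\rightharpoonup v$ in $L^{h(x)}\big(\mathbb R^N,\mathbb R^L\big)$ for some $v\in L^{h(x)}\big(\mathbb R^N,\mathbb R^L\big)$.

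The heart of the proof is the identification $v=u$ a.e. Fix a measurable set $A\subset\mathbb R^N$ with $|A|<\infty$ and an index $i\in\{1,\dots,L\}$. On one hand, by Hölder's inequality (Proposition \ref{Holder's inequality}), the map $w\mapsto\int_A w_i$ is a bounded linear functional on $L^{h(x)}\big(\mathbb R^N,\mathbb R^L\big)$ — indeed $\chi_A\in L^{h'(x)}(\mathbb R^N)$ because $h'$ is bounded and $|A|<\infty$ — so $\int_A (u_{n_k})_i\to\int_A v_i$. On the other hand, the family $\{u_n\}$ is equi\-integrable over $A$: for measurable $E\subset A$ with $|E|\le 1$, Hölder's inequality yields $\int_E|(u_n)_i|\le C\,|(u_n)_i|_{h(x)}\,|\chi_E|_{h'(x)}\le C'\,|\chi_E|_{h'(x)}$, and by the modular--norm relations of Proposition \ref{normxmodular} (applied to $\varrho_{h'(x)}$) one has $|\chi_E|_{h'(x)}\le |E|^{1/h'_+}\to 0$ as $|E|\to 0$, uniformly in $n$. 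Since in addition $u_n\to u$ a.e.\ on the finite-measure set $A$, Vitali's convergence theorem gives $u_n\to u$ in $L^1(A,\mathbb R^L)$, hence $\int_A (u_n)_i\to\int_A u_i$. Comparing the two limits, $\int_A v_i=\int_A u_i$ for every finite-measure $A$ and every $i$, and therefore $v=u$ a.e.

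Consequently every subsequence of $(u_n)$ has a further subsequence converging weakly to $u$, which proves $u_n\rightharpoonup u$ in $L^{h(x)}\big(\mathbb R^N,\mathbb R^L\big)$. The only genuinely technical point is the equi\-integrability estimate, where one must control the Luxemburg norm $|\chi_E|_{h'(x)}$ of an indicator function in the variable-exponent setting; this is precisely where the modular inequalities of Proposition \ref{normxmodular} enter. The remaining ingredients — reflexivity, Eberlein--\v Smulian, Vitali's theorem, and the subsequence principle — are purely soft.
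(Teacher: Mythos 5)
Your proof is correct, and it is essentially the standard argument the paper invokes by citing Kavian: extract a weakly convergent subsequence by reflexivity, identify its limit with the a.e.\ limit $u$ by testing against characteristic functions of finite-measure sets (your Vitali/equi-integrability step plays the role of the classical Egorov-type identification), and conclude for the whole sequence by the subsequence principle. The variable-exponent adaptations you need — reflexivity for $1<h_-\le h_+<\infty$, $\chi_A\in L^{h'(x)}$, and the Luxemburg-norm bound $|\chi_E|_{h'(x)}\le |E|^{1/h'_+}$ for $|E|\le 1$ — are all handled correctly.
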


\begin{proposition} [Brezis-Lieb lemma, third version] \label{third Brezis-Lieb}
   Let  $ (u_n) $ be a bounded sequence in $ L^{ h(x) } \big( \mathbb R^N, \mathbb R^L \big) $ with $ h_- > 1 $ and $ u_n(x) \to u(x) $ for a.e. $ x \in \mathbb R^N $. Then
\begin{equation}
   \int_{ \mathbb R^N } \left| \left| u_n \right|^{ h(x)-2 } u_n - \left| u_n - u \right|^{ h(x)-2 } \left( u_n - u \right) - | u |^{ h(x)-2 } u \right|^{ h'(x) } \, dx = o_n(1),
\end{equation}
\end{proposition}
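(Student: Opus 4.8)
The plan is to run the classical Brezis--Lieb truncation scheme, but with the scalar power function replaced by the vector field $g_{x}(\xi)=|\xi|^{h(x)-2}\xi$ on $\mathbb R^{L}$, using $1<h_-\le h_+<\infty$ to make every constant uniform in $x$.

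First I would record the elementary facts. Since $\xi\mapsto|\xi|^{s-2}\xi$ is continuous on $\mathbb R^{L}$ for each $s>1$, the hypothesis $u_n(x)\to u(x)$ a.e. gives
$$
\big|u_n\big|^{h(x)-2}u_n-\big|u_n-u\big|^{h(x)-2}(u_n-u)-|u|^{h(x)-2}u\longrightarrow 0\quad\text{a.e. in }\mathbb R^{N},
$$
so the integrand tends to $0$ a.e. Moreover $u\in L^{h(x)}(\mathbb R^{N},\mathbb R^{L})$ (by Fatou's lemma, or Proposition \ref{first Brezis-Lieb}), hence $(u_n-u)$ is bounded in $L^{h(x)}(\mathbb R^{N},\mathbb R^{L})$, and by the Remark following Proposition \ref{normxmodular} this yields a constant $C_{0}$, independent of $n$, with $\int_{\mathbb R^{N}}|u_n-u|^{h(x)}\le C_{0}$.

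The analytic core is the pointwise inequality: for every $\varepsilon>0$ there is $C_\varepsilon>0$ such that for all $a,b\in\mathbb R^{L}$ and all $s\in[h_-,h_+]$,
$$
\big|\,|a+b|^{s-2}(a+b)-|a|^{s-2}a\,\big|^{s'}\le\varepsilon\,|a|^{s}+C_\varepsilon\,|b|^{s},\qquad \tfrac1s+\tfrac1{s'}=1 .
$$
The exponents are consistent since $(s-1)s'=s$ and $(s-2)s'=s-s'$. I would prove it by homogeneity together with a case split: for $1<s\le 2$ one invokes the Hölder continuity $\big|\,|a+b|^{s-2}(a+b)-|a|^{s-2}a\,\big|\le c_{s}|b|^{s-1}$, which already gives the bound $C|b|^{s}$; for $s>2$ one estimates the increment of $g_x$ by $(s-1)(|a|+|b|)^{s-2}|b|$ when $|b|\le|a|$ and by $(2^{s-1}+1)|b|^{s-1}$ when $|b|>|a|$, then in the first regime applies the weighted Young inequality $|a|^{s-s'}|b|^{s'}\le\varepsilon|a|^{s}+C_\varepsilon|b|^{s}$ (valid because $s-s'>0$ for $s>2$). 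Since all the constants produced depend continuously on $s$, they are bounded on the compact set $[h_-,h_+]\subset(1,\infty)$, which gives the uniformity.

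Applying this with $a=u_n-u$, $b=u$, $s=h(x)$, and using $|x+y|^{s'}\le 2^{h'_+-1}(|x|^{s'}+|y|^{s'})$, I get, for a.e. $x$,
$$
\underbrace{\Big|\,|u_n|^{h-2}u_n-|u_n-u|^{h-2}(u_n-u)-|u|^{h-2}u\,\Big|^{h'}}_{=:W_n(x)}\le 2^{h'_+-1}\varepsilon\,|u_n-u|^{h}+C_{1}\,|u|^{h},
$$
with $C_{1}$ independent of $n$. Then $f_n:=\big(W_n-2^{h'_+-1}\varepsilon\,|u_n-u|^{h}\big)^{+}$ satisfies $0\le f_n\le C_{1}|u|^{h}\in L^{1}(\mathbb R^{N})$ and $f_n\to 0$ a.e., so $\int_{\mathbb R^{N}}f_n\to 0$ by dominated convergence; since $W_n\le f_n+2^{h'_+-1}\varepsilon\,|u_n-u|^{h}$ and $\int_{\mathbb R^{N}}|u_n-u|^{h}\le C_{0}$,
$$
\limsup_{n\to\infty}\int_{\mathbb R^{N}}W_n\le 2^{h'_+-1}C_{0}\,\varepsilon .
$$
Letting $\varepsilon\to 0$ yields $\int_{\mathbb R^{N}}W_n\to 0$, which is the assertion. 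The main obstacle is the third step: proving the vector inequality with constants genuinely uniform in the exponent $s\in[h_-,h_+]$ (the case $s>2$ and the behaviour near $\xi=0$ require care); the rest is the standard Brezis--Lieb truncation argument combined with the modular estimates of Section~2.
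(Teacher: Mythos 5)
Your proof is correct and is the standard Brezis--Lieb truncation argument adapted to vector-valued $\xi\mapsto|\xi|^{h(x)-2}\xi$, with the pointwise split inequality uniformized over $s\in[h_-,h_+]$ by compactness. The paper does not present a proof of this proposition but defers to \cite{AlvesFerreira1}; your argument is the canonical one for such statements (a.e.\ convergence of the integrand, the $\varepsilon$-weighted pointwise bound $\bigl||a+b|^{s-2}(a+b)-|a|^{s-2}a\bigr|^{s'}\le\varepsilon|a|^s+C_\varepsilon|b|^s$, and dominated convergence applied to $\bigl(W_n-2^{h'_+-1}\varepsilon|u_n-u|^{h}\bigr)^{+}$), and all the uniformity claims are justified since the relevant constants are continuous in $s$ on the compact interval $[h_-,h_+]\subset(1,\infty)$.
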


\vspace{.3cm}
To finish this section, we notice that for any open subset $ \Omega \subset \mathbb R^N $, we can define of the same way the spaces $ L^{ h(x) } \big( \Omega \big) $ and $ W^{ 1,h(x) } \big( \Omega \big) $. Moreover, all the above propositions hold for these spaces and, besides, we have the following embedding Theorem of Sobolev's type.

\begin{proposition} [{\cite[Theorems 1.1, 1.3]{FanShenZhao}}] \label{embedding theorem}
   Let $ \Omega \subset \mathbb R^N $ an open domain with the cone property, $ h \colon \overline{\Omega} \to \mathbb R $ satisfying $ 1 < h_- \le h_+ < N $ and $ m \in L^{\infty}_+ \big( \Omega \big) $.
  \begin{enumerate}
     \item[(i)] If $ h $ is Lipschitz continuous and $ h \le m \le h^{\ast} $, the embedding $ W^{ 1,h(x) } \big( \Omega \big) \hookrightarrow L^{ m(x) } \big( \Omega \big) $ is continuous;
     \item[(ii)] If $ \Omega $ is bounded, $ h $ is continuous and $ m \ll h^{\ast} $, the embedding $ W^{ 1,h(x) } \big( \Omega \big) \hookrightarrow L^{ m(x) } \big( \Omega \big) $ is compact.
  \end{enumerate}
\end{proposition}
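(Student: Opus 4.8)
The plan is to reduce item (i) to a single hard ingredient, the critical Sobolev inequality $W^{1,h(x)}(\Omega)\hookrightarrow L^{h^*(x)}(\Omega)$, together with a soft interpolation inequality for variable Lebesgue spaces, and to reduce item (ii) to a subcritical version of that embedding plus a Rellich-type truncation argument. For the interpolation step: since $h(x)\le m(x)\le h^*(x)$ a.e., splitting each point according to whether $|u(x)|\le 1$ or $|u(x)|>1$ gives $|u(x)|^{m(x)}\le|u(x)|^{h(x)}+|u(x)|^{h^*(x)}$ a.e., so $\int_\Omega|u|^{m(x)}\le\int_\Omega|u|^{h(x)}+\int_\Omega|u|^{h^*(x)}$, and converting these modulars into norms (Proposition~\ref{normxmodular} and the Remark following it) yields $|u|_{m(x)}\le C\big(|u|_{h(x)}+|u|_{h^*(x)}\big)$; that is, $L^{h(x)}(\Omega)\cap L^{h^*(x)}(\Omega)\hookrightarrow L^{m(x)}(\Omega)$. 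Composing the trivial inclusion $W^{1,h(x)}(\Omega)\hookrightarrow L^{h(x)}(\Omega)$ and the critical embedding with this interpolation gives item (i); note that no boundedness of $\Omega$ enters here, which is precisely why both envelope exponents $h$ and $h^*$ must be present.

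The critical embedding $W^{1,h(x)}(\Omega)\hookrightarrow L^{h^*(x)}(\Omega)$ is the step I expect to be the main obstacle, and I would prove it by localization. Using the cone property, cover $\Omega$ by a locally finite family of balls $B_i$ of radius $\rho_i$ chosen so small that the oscillation of $h$ on $B_i$ (hence of the Lipschitz function $h^*$) is below a fixed threshold, and fix a subordinate partition of unity $\{\psi_i\}$. On $B_i$ freeze the exponent at $h_i^-:=\inf_{B_i}h$: then $\psi_i u\in W^{1,h_i^-}(B_i)$, with $\int_{B_i}\big(|\nabla(\psi_i u)|^{h_i^-}+|\psi_i u|^{h_i^-}\big)$ controlled by $\int_{B_i}\big(|\nabla u|^{h(x)}+|u|^{h(x)}\big)+1$, and the classical constant-exponent Gagliardo-Nirenberg-Sobolev inequality on the ball bounds $\int_{B_i}|\psi_i u|^{(h_i^-)^*}$ by a power of this quantity. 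The crucial point is the passage from the constant target exponent $(h_i^-)^*=\inf_{B_i}h^*$ back to the pointwise exponent $h^*(x)$: the Lipschitz (equivalently, log-Hölder) continuity of $h$ gives $0\le h^*(x)-(h_i^-)^*\le C/|\log\rho_i|$ on $B_i$, hence $t^{\,h^*(x)-(h_i^-)^*}\le C$ for all $0\le t\le\rho_i^{-N}$ and therefore $t^{h^*(x)}\le C\,t^{(h_i^-)^*}$ on that range, the exceptional set $\{|u|>\rho_i^{-N}\}$ being absorbed using that $\int_{B_i}|u|^{h(x)}$ is controlled. Summing the local estimates $\int_{B_i}|\psi_i u|^{h^*(x)}\le C$ over the cover and converting back to norms gives the global continuous embedding. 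The delicate part is making this frozen-exponent comparison uniform over all balls of the cover, including those meeting $\partial\Omega$, where the cone property plays the role of an extension operator.

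For item (ii), assume in addition $\Omega$ bounded, $h$ merely continuous, and $m\ll h^*$ (we may assume $m\ge h$ on $\Omega$, replacing $m$ by $\max\{m,h\}$ if necessary, since $L^{h(x)}(\Omega)\hookrightarrow L^{s(x)}(\Omega)$ whenever $\Omega$ is bounded and $s\le h$), and fix an exponent $r$ with $m\ll r\ll h^*$, e.g. $r=\tfrac12(m+h^*)$. The continuous embedding $W^{1,h(x)}(\Omega)\hookrightarrow L^{r(x)}(\Omega)$ follows from the subcritical version of the previous argument, which needs only uniform continuity of $h$ on the compact set $\overline\Omega$: on a small ball $B_i$ one has $W^{1,h(x)}(B_i)\hookrightarrow W^{1,h_i^-}(B_i)\hookrightarrow L^{(h_i^-)^*}(B_i)\hookrightarrow L^{r(x)}(B_i)$, the last inclusion holding because $(h_i^-)^*=\inf_{B_i}h^*\ge r(x)$ for a.e. $x\in B_i$ once $\mathrm{diam}\,B_i$ is small and $B_i$ has finite measure; patch with a partition of unity. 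Now let $(u_n)$ be bounded in $W^{1,h(x)}(\Omega)$; since $h_-\le h(x)$ and $|\Omega|<\infty$ it is bounded in $W^{1,h_-}(\Omega)$, so by the classical Rellich-Kondrachov theorem a subsequence converges in $L^{h_-}(\Omega)$, hence a.e., to some $u$, while $\sup_n\int_\Omega|u_n|^{r(x)}<\infty$. Put $v_n=u_n-u$ and $\sigma:=\inf(r-m)>0$. For $R\ge1$: on $\{|v_n|>R\}$, $|v_n|^{m(x)}=|v_n|^{r(x)}|v_n|^{m(x)-r(x)}\le R^{-\sigma}|v_n|^{r(x)}$, so $\int_{\{|v_n|>R\}}|v_n|^{m(x)}\le R^{-\sigma}\sup_n\int_\Omega|v_n|^{r(x)}$; on $\{|v_n|\le R\}$, $|v_n|^{m(x)}\le R^{m_+}+1\in L^1(\Omega)$ and $v_n\to 0$ a.e., so $\int_{\{|v_n|\le R\}}|v_n|^{m(x)}\to 0$ by dominated convergence. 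Letting $n\to\infty$ and then $R\to\infty$ gives $\int_\Omega|v_n|^{m(x)}\to 0$, i.e. $u_n\to u$ in $L^{m(x)}(\Omega)$. Since every bounded sequence has a weakly convergent subsequence to which the above applies, $W^{1,h(x)}(\Omega)\hookrightarrow L^{m(x)}(\Omega)$ is compact.
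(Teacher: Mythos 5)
The paper itself offers no proof of this proposition: it is quoted verbatim from \cite{FanShenZhao} (Theorems 1.1 and 1.3), so your argument has to be judged on its own merits. Much of it is sound. The interpolation step in (i) (pointwise $|u|^{m(x)}\le |u|^{h(x)}+|u|^{h^{\ast}(x)}$ for $h\le m\le h^{\ast}$, then modular-to-norm conversion) is correct, and your item (ii) is essentially a complete and standard argument: reduce to $m\ge h$, insert $r$ with $m\ll r\ll h^{\ast}$, get the continuous embedding into $L^{r(x)}$ by freezing the exponent on finitely many small balls (here only uniform continuity of $h$ on the compact $\overline{\Omega}$ and subcriticality are needed), extract an a.e. convergent subsequence via the classical Rellich--Kondrachov theorem in $W^{1,h_-}(\Omega)$, and conclude by the splitting at height $R$ (a Vitali-type uniform integrability argument). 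That part I would accept.

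The gap is exactly where you predicted it: the critical embedding $W^{1,h(x)}(\Omega)\hookrightarrow L^{h^{\ast}(x)}(\Omega)$ needed for (i) is not established by the freeze-and-compare scheme you outline. First, on the exceptional set $\{|u|>\rho_i^{-N}\}$ the comparison $t^{h^{\ast}(x)}\le C\,t^{(h_i^-)^{\ast}}$ is unavailable, and it cannot be ``absorbed using that $\int_{B_i}|u|^{h(x)}$ is controlled'': there $|u|^{h^{\ast}(x)}=|u|^{h(x)}\,|u|^{h^{\ast}(x)-h(x)}$ with $|u|^{h^{\ast}(x)-h(x)}$ unbounded, and a modular bound in the frozen exponent $(h_i^-)^{\ast}$ does not control the modular in the strictly larger variable exponent $h^{\ast}(x)$ for a general function (one can construct $v\in L^{(h_i^-)^{\ast}}(B_i)$ with $\int_{B_i}|v|^{h^{\ast}(x)}=\infty$); ruling this out for $v=\psi_i u$ requires using the gradient again, which your sketch does not do. The published proofs at the critical exponent (Edmunds--R\'akosn\'{\i}k, and the proof of Theorem 1.1 in \cite{FanShenZhao}) use a genuinely different mechanism, e.g.\ applying the constant-exponent Sobolev inequality to $w=|u|^{h^{\ast}(x)/t^{\ast}}$ and absorbing the resulting $|u|^{\cdot}\log|u|\,\nabla h^{\ast}$ term by the Lipschitz bound on $h$. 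Second, for unbounded $\Omega$ your per-ball bookkeeping breaks down: the comparison $\int_{B_i}|w|^{h_i^-}\le\int_{B_i}|w|^{h(x)}+|B_i|$ carries an additive constant on each of the infinitely many balls of a locally finite cover, and these constants do not sum; making the local estimates summable is one of the genuine technical points of the cited proof. Since the paper only ever invokes the proposition with subcritical exponents $q\ll p^{\ast}$, your subcritical machinery covers every application made here, but as a proof of item (i) as stated (with $m\le h^{\ast}$ allowed and $\Omega$ possibly unbounded) the argument is incomplete.
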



\section{An auxiliary problem}


In this section, we work with an auxiliary problem adapting the ideas explored in del Pino \& Felmer \cite{DelPinoFelmer} (see also \cite{Alves}). 

We start noting that the energy functional $ I_\lambda \colon E_\lambda \to \mathbb R $ associated with $ \big( P_\lambda \big) $ is  given by
$$
   I_\lambda (u) = \int_{ \mathbb R^N } \frac{1}{p(x)} \left( \big| \nabla u \big|^{ p(x) } + \big( \lambda V(x) + Z(x) \big) | u |^{ p(x) } \right) - \int_{ \mathbb R^N } F(x,u),
$$
where $ E_\lambda = \big( E, \| \cdot \|_\lambda \big) $ with
$$
   E = \left\{ u \in W^{ 1,p(x) } \big( \mathbb R^N \big) \, ; \, \int_{ \mathbb R^N } V(x) |u|^{ p(x) } < \infty \right\},
$$
and
$$
   \| u \|_\lambda = \inf \left\{ \sigma > 0 \, ; \, \varrho_{ \lambda } \left( \frac{u}{\sigma} \right) \le 1 \right\}, 
$$
being 
$$
\varrho_{ \lambda }(u) = \int_{ \mathbb R^N } \left( \big| \nabla u \big|^{ p(x) } + \big( \lambda V(x) + Z(x) \big) | u |^{ p(x) } \right).
$$

Thus $ E_\lambda \hookrightarrow W^{ 1,p(x) } \big( \mathbb R^N \big) $ continuously for $ \lambda \ge 1 $ and $ E_\lambda $ is compactly embedded in $ L_{ loc }^{ h(x) } \big( \mathbb R^N \big) $, for all $ 1 \le h \ll p^* $. In addition, we can show that $ E_\lambda $ is a reflexive space. Also, being $ {\cal O} \subset \mathbb R^N $ an open set, from the relation
\begin{equation} \label{modular relation 1}
   \varrho_{ \lambda, {\cal O} }(u) = \int_{ \cal O } \left( \big| \nabla u \big|^{ p(x) } + \big( \lambda V(x) + Z(x) \big) | u |^{ p(x) } \right)  \ge M \int_{ \cal O } |u|^{ p(x) } = M \varrho_{ p(x), \cal{O} }(u),
\end{equation}
for all $ u \in E_\lambda $ with $ \lambda \ge 1 $, writing $ M = ( 1-\delta )^{ -1 } \nu $, for some $ 0 < \delta < 1 $ and $ \nu > 0 $, we derive
\begin{equation} \label{modular relation 2}
   \varrho_{ \lambda, \cal{O} }(u) - \nu \varrho_{ p(x), \cal{O} }(u) \ge \delta \varrho_{ \lambda,\cal{O} }(u), \, \forall u \in E_\lambda, \, \lambda \ge 1.
\end{equation}

\begin{remark}
   From the above commentaries,  in this work the parameter $\lambda$ will be always bigger than or equal to 1. 
\end{remark}

We recall that for any $ \epsilon > 0 $, the hypotheses $ (f_1) $, $ (f_2) $ and $ (f_5) $ yield
\begin{equation} \label{f estimate}
   f(x,t) \le \epsilon | t |^{ p(x)-1 } + C_\epsilon | t |^{ q(x)-1 }, \, \forall x \in \mathbb R^N, \, t \in \mathbb R,
\end{equation}
and, consequently,
\begin{equation} \label{F estimate}
   F(x,t) \le \epsilon | t |^{ p(x) } + C_\epsilon | t |^{ q(x) }, \, \forall x \in \mathbb R^N, \, t \in \mathbb R,
\end{equation}
where $ C_\epsilon $ depends on $ \epsilon $. Moreover, for each $\nu >0$ fixed, the assumptions $ (f_2) $ and $ (f_3) $ allow us considering the function $ a \colon \mathbb R^N \to \mathbb R$ given by    
\begin{equation} \label{fc a}
   a(x) = \min \left\{ a > 0 \, ; \, \frac{f(x,a)}{a^{ p(x)-1 }} = \nu \right\}.
\end{equation}
From $(f_2)$, it follows that 
\begin{equation} \label{a_}
0 < a_- = \inf_{ x \in \mathbb R^N } a(x).
\end{equation}

Using the function $a(x)$, we set the function $ \tilde{f} \colon \mathbb R^N \times \mathbb R \to \mathbb R $ given by		
$$
   \tilde{f}(x,t) =
   \begin{cases}
      \ \, f(x,t), \ t \le a(x) \\
	    \nu t^{ p(x)-1 }, \ t \ge a(x)
   \end{cases},
$$
which fulfills the inequality 
\begin{equation} \label{til f estimate}
   \tilde{f}(x,t) \le \nu | t |^{ p(x)-1 }, \, \forall x \in \mathbb R^N, t \in \mathbb R.
\end{equation}
Thus
\begin{equation} \label{t til f estimate}
   \tilde{f}(x,t) t \le \nu | t |^{ p(x) }, \, \forall x \in \mathbb R^N, t \in \mathbb R,
\end{equation}
and
\begin{equation} \label{til F estimate}
  \tilde{F}(x,t) \le \frac{\nu}{p(x)} | t |^{ p(x) }, \, \forall x \in \mathbb R^N, t \in \mathbb R,
\end{equation}
where $ \tilde{F}(x,t) = \int_0^t \tilde{f}(x,s) \, ds $.

Now, once that $ \Omega=\text{int } V^{ -1 } (0) $ is formed by $ k $ connected components $ \Omega_1, \ldots, \Omega_k $ with $ \text{dist} \big( \Omega_i, \Omega_j \big) > 0, \, i \ne j $, then for each $ j \in \{ 1, \ldots, k \} $, we are able to fix a smooth bounded domain $ \Omega'_j $ such that
\begin{equation} \label{omega}
   \overline{\Omega_j} \subset \Omega'_j \, \text{ and } \, \overline{\Omega'_i} \cap \overline{\Omega'_j} = \emptyset, \text{ for } i \ne j.
\end{equation}

From now on, we fix a non-empty subset $ \Upsilon \subset \left\{ 1, \ldots, k \right\} $ and 
$$
   \Omega_\Upsilon = \bigcup_{ j \in \Upsilon } \Omega_j, \, \Omega'_\Upsilon = \bigcup_{ j \in \Upsilon } \Omega'_j, \,
	 \chi_\Upsilon =
	 \begin{cases}
	    1, \text{ if } x \in \Omega'_\Upsilon \\
		  0, \text{ if } x \notin \Omega'_\Upsilon .
	 \end{cases}
$$
Using the above notations, we set the functions
$$
   g(x,t) = \chi_\Upsilon(x) f(x,t) + \big( 1-\chi_\Upsilon(x) \big) \tilde{f}(x,t), \, (x,t) \in \mathbb R^N \times \mathbb R
$$
and
$$
   G(x,t) = \int_0^t g(x,s) \, ds, \, (x,t) \in \mathbb R^N \times \mathbb R,
$$
and the auxiliary problem
$$
   \big( A_\lambda \big) \,
	 \begin{cases}
	    - \Delta_{ p(x) } u + \big( \lambda V(x) + Z(x) \big) | u |^{ p(x)-2 } u  = g(x,u), \text{ in } \mathbb R^N, \\
		 u \in W^{ 1,p(x) } \big( \mathbb R^N \big).
	 \end{cases}
$$

The problem $ \big( A_\lambda \big) $ is related to $ \big( P_\lambda \big) $, in the sense that, if $ u_\lambda $ is a solution for $ \big( A_\lambda \big) $  verifying
$$
    u_\lambda (x) \le a(x), \, \forall x \in \mathbb R^N \setminus \Omega'_\Upsilon,
$$
then it is a solution for $ \big( P_\lambda \big) $. 

In comparison to $ \big( P_\lambda \big) $, problem $ \big( A_\lambda \big) $ has the advantage that the energy functional associated with $ \big( A_\lambda \big) $, namely, $ \phi_\lambda \colon E_\lambda \to \mathbb R $ given by 
$$
    \phi_\lambda(u) = \int_{ \mathbb R^N } \frac{1}{p(x)} \left( \left| \nabla u \right|^{ p(x) } + \big( \lambda V(x) + Z(x) \big) | u |^{ p(x) } \right) - \int_{ \mathbb R^N } G(x,u),
$$
satisfies the $ (PS) $ condition, whereas $ I_\lambda $ does not necessarily satisfy this condition. This way, the mountain pass level (see Theorem \ref{TAlambda}) is a critical value for $ \phi_\lambda $.

\vspace{0.5 cm}

\begin{proposition} \label{mpg}
   $ \phi_\lambda $ satisfies the mountain pass geometry. 
\end{proposition}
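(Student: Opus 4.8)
The plan is to verify the two standard conditions of the mountain pass theorem for $\phi_\lambda$: (i) there exist $\rho, \alpha > 0$ such that $\phi_\lambda(u) \ge \alpha$ whenever $\|u\|_\lambda = \rho$; and (ii) there exists $e \in E_\lambda$ with $\|e\|_\lambda > \rho$ and $\phi_\lambda(e) < 0$. Throughout I will exploit that $G(x,t) \le F(x,t)$ pointwise (since $g$ is a convex combination of $f$ and the truncation $\tilde f \le f$ on $t > 0$), so that the estimates \eqref{F estimate} and \eqref{til F estimate} combine to give, for any $\epsilon > 0$,
$$
   G(x,t) \le \epsilon |t|^{p(x)} + C_\epsilon |t|^{q(x)}, \quad \forall x \in \mathbb R^N, \, t \in \mathbb R.
$$

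For part (i), I would first take $u$ with $\|u\|_\lambda \le 1$ small, so that by Proposition \ref{normxmodular}(ii) the quadratic-type part is bounded below by $\tfrac{1}{p_+}\varrho_\lambda(u) \ge \tfrac{1}{p_+}\|u\|_\lambda^{p_+}$. For the nonlinear term, I use the continuous embeddings $E_\lambda \hookrightarrow W^{1,p(x)}(\mathbb R^N) \hookrightarrow L^{p(x)}(\mathbb R^N) \cap L^{q(x)}(\mathbb R^N)$ (valid since $p$ is Lipschitz and $p \le q \ll p^*$, by Proposition \ref{embedding theorem}(i) after covering $\mathbb R^N$ — here one uses that $p$ Lipschitz gives the global Sobolev embedding), together with Proposition \ref{normxmodular} applied to $\varrho_{p(x)}$ and $\varrho_{q(x)}$, to bound $\int G(x,u) \le \epsilon C_1 \|u\|_\lambda^{p_-} + C_\epsilon C_2 \|u\|_\lambda^{q_-}$ for $\|u\|_\lambda$ small. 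Choosing $\epsilon$ small enough that $\epsilon C_1 < \tfrac{1}{2p_+}$ and then $\rho$ small (using $q_- > p_+ \ge p_-$, so the $\|u\|_\lambda^{q_-}$ term is dominated), one gets $\phi_\lambda(u) \ge \tfrac{1}{4p_+}\rho^{p_+} =: \alpha > 0$ on the sphere $\|u\|_\lambda = \rho$.

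For part (ii), fix any nonnegative $\varphi \in C_0^\infty(\Omega_\Upsilon) \setminus \{0\}$; since $\Omega_\Upsilon \subset \Omega'_\Upsilon$ we have $\chi_\Upsilon \equiv 1$ on the support of $\varphi$, so $G(x, t\varphi) = F(x, t\varphi)$ there. By $(f_3)$ there are constants $c_1, c_2 > 0$ with $F(x,t) \ge c_1 t^\theta - c_2$ for $t \ge 0$ (integrate $\theta F \le f(x,t) t = t \partial_t F$, i.e. $\partial_t(F t^{-\theta}) \ge 0$, then use $(f_5)$ to control small $t$). Hence for $t \ge 1$,
$$
   \phi_\lambda(t\varphi) \le \frac{t^{p_+}}{p_-}\,\varrho_\lambda(\varphi) - c_1 t^\theta \int_{\Omega_\Upsilon} \varphi^\theta + c_2 |\Omega_\Upsilon|,
$$
and since $\theta > p_+$ this $\to -\infty$ as $t \to \infty$; take $e = t_0 \varphi$ for $t_0$ large. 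I expect the only mildly delicate point to be the global embedding $E_\lambda \hookrightarrow L^{q(x)}(\mathbb R^N)$ used in part (i): it relies on $p$ being Lipschitz on all of $\mathbb R^N$ and on $q \ll p^*$, which is exactly hypothesis $(f_1)$ together with $(H_1)$, so no genuine obstacle arises — the argument is the standard mountain pass verification adapted to the modular inequalities of Proposition \ref{normxmodular}. $\blacksquare$
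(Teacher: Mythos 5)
Your part (ii) is essentially identical to the paper's: take $v \in C_0^\infty(\Omega_\Upsilon)$, observe that $G = F$ on the support, use $(f_3)$ to get the $t^\theta$ lower bound on $F$, and send $t\to\infty$.

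Part (i), however, contains a genuine gap in how the term $\int_{\mathbb R^N}|u|^{p(x)}$ is handled. You bound $\int G(x,u) \le \epsilon C_1\|u\|_\lambda^{p_-} + C_\epsilon C_2\|u\|_\lambda^{q_-}$ (via the embedding $E_\lambda\hookrightarrow L^{p(x)}$ and modular--norm conversion for $\|u\|_\lambda\le 1$), so that
$$
\phi_\lambda(u)\ \ge\ \frac{1}{p_+}\|u\|_\lambda^{p_+}\ -\ \epsilon C_1\|u\|_\lambda^{p_-}\ -\ C_\epsilon C_2\|u\|_\lambda^{q_-}.
$$
The exponents are now mismatched: for $0<\|u\|_\lambda\le 1$ one has $\|u\|_\lambda^{p_-}\ge\|u\|_\lambda^{p_+}$, so the negative term $-\epsilon C_1\|u\|_\lambda^{p_-}$ dominates the positive term $\frac{1}{p_+}\|u\|_\lambda^{p_+}$ as $\|u\|_\lambda\to 0$, no matter how small $\epsilon$ is (since $p_-<p_+$ is allowed, the ratio $\|u\|_\lambda^{p_+-p_-}\to 0$). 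Fixing $\epsilon C_1<\frac{1}{2p_+}$ does not remedy this, and shrinking $\rho$ only makes it worse. This is precisely the loss of information incurred by converting the modular $\int|u|^{p(x)}$ to a power of the norm in the variable-exponent setting.

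The paper avoids this by never converting that term to a norm: since $\lambda V(x)+Z(x)\ge M$ pointwise by $(H_3)$, relation \eqref{modular relation 1} gives $\int_{\mathbb R^N}|u|^{p(x)}\le \frac{1}{M}\varrho_\lambda(u)$, so the $\epsilon$- and $\nu$-terms are absorbed directly into $\frac{1}{p_+}\varrho_\lambda(u)$ \emph{before} passing to the norm. One gets $\phi_\lambda(u)\ge \alpha\,\varrho_\lambda(u)-C_\epsilon\int|u|^{q(x)}$ with $\alpha>0$, and only the $q(x)$-term is treated by the embedding; then $\varrho_\lambda(u)\ge\|u\|_\lambda^{p_+}$ and $\int|u|^{q(x)}\le(C_q\|u\|_\lambda)^{q_-}$ give $\phi_\lambda(u)\ge\alpha\|u\|_\lambda^{p_+}-C\|u\|_\lambda^{q_-}$, with matching exponents since $p_+<q_-$. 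You should replace your treatment of the $L^{p(x)}$-term with this pointwise absorption; the rest of your argument then goes through.
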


\begin{proof}
   From (\ref{F estimate}) and (\ref{til F estimate}), 
$$
   \phi_\lambda(u) \ge \frac{1}{p_+} \varrho_\lambda (u) - \epsilon \int_{ \mathbb R^N } |u|^{ p(x) } - C_\epsilon \int_{ \mathbb R^N } |u|^{ q(x) } - \frac{\nu}{p_-} \int_{\mathbb R^N} |u|^{ p(x) }, 	
$$
for $ \epsilon > 0 $ and $ C_\epsilon > 0 $ be a constant depending on $ \epsilon $. By \eqref{modular relation 1}, fixing $ \epsilon < \frac{M}{p_+} $ and $ \nu < p_- M \left( \frac{1}{p_+}-\frac{\epsilon}{M} \right) $ and assuming $ \| u \|_\lambda < \min \left\{ 1, 1/C_q \right\} $, where $ | v |_{ q(x) } \le C_q \| v \|_\lambda, \, \forall v \in E_\lambda $, we derive from Proposition \ref{normxmodular}
$$
   \phi_\lambda (u) \ge \alpha \| u \|^{ p_+ }_\lambda - C \| u \|^{q_-}_\lambda,
$$
where $ \alpha = \left( \frac{1}{p_+} - \frac{\epsilon}{M} \right) - \frac{\nu}{p_-M} > 0 $. Once $ p_+ < q_- $, the first part of the mountain pass geometry is satisfied. Now, fixing $v \in C^{\infty}_{0}(\Omega_\Upsilon)$, we have for $t \geq 0$
$$
\phi_{\lambda}(tv)= \int_{ \mathbb R^N } \frac{t^{p(x)}}{p(x)} \left( \left| \nabla v \right|^{ p(x) } + Z(x) \big) | v |^{ p(x) } \right) - \int_{ \mathbb R^N } F(x,tv).
$$ 
If $t>1$, by $(f_3)$,  
$$
\phi_{\lambda}(tv) \leq \frac{t^{p^{+}}}{p_{-}} \int_{ \mathbb R^N }\left( \left| \nabla v \right|^{ p(x) } + Z(x) \big) | v |^{ p(x) } \right) -C_1t^{\theta}\int_{\mathbb{R}^{N}}|v|^{\theta}-C_2,
$$
and so, 
$$
\phi_\lambda(tv) \to -\infty \quad \mbox{as} \quad t \to +\infty.
$$
The last limit implies that $\phi_{\lambda}$ verifies the second geometry of the mountain pass.  \end{proof}

\begin{proposition} \label{boundedness}
   All $ (PS)_d $ sequences for $ \phi_\lambda $ are bounded in $ E_\lambda $.
\end{proposition}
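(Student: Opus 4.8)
The plan is to run the classical del Pino--Felmer argument for modified functionals: along a $(PS)_d$ sequence one tests $\phi_\lambda$ with the combination $\phi_\lambda(u_n)-\frac{1}{\theta}\phi_\lambda'(u_n)u_n$, which turns out to be coercive on the modular because $(f_3)$ makes the nonlinearity superhomogeneous on $\Omega'_\Upsilon$ while the truncation $\tilde f$ is strictly sub-$p(x)$ on $\mathbb{R}^N\setminus\Omega'_\Upsilon$ by \eqref{t til f estimate} and \eqref{til F estimate}. So let $(u_n)\subset E_\lambda$ satisfy $\phi_\lambda(u_n)\to d$ and $\epsilon_n:=\|\phi_\lambda'(u_n)\|_{E_\lambda^{*}}\to 0$. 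Using $G=\chi_\Upsilon F+(1-\chi_\Upsilon)\tilde F$ and $g=\chi_\Upsilon f+(1-\chi_\Upsilon)\tilde f$, one expands
$$
\phi_\lambda(u_n)-\frac{1}{\theta}\phi_\lambda'(u_n)u_n=\int_{\mathbb{R}^N}\Big(\frac{1}{p(x)}-\frac{1}{\theta}\Big)\Big(|\nabla u_n|^{p(x)}+(\lambda V(x)+Z(x))|u_n|^{p(x)}\Big)+\int_{\mathbb{R}^N}\Big(\frac{1}{\theta}g(x,u_n)u_n-G(x,u_n)\Big).
$$

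Next I would bound both terms from below. Since $p(x)\le p_+<\theta$ and the integrand of the first term is nonnegative, that term is $\ge\big(\frac{1}{p_+}-\frac{1}{\theta}\big)\varrho_\lambda(u_n)$. For the second term, split $\mathbb{R}^N$ into $\Omega'_\Upsilon$ and its complement: on $\Omega'_\Upsilon$ one has $g(x,\cdot)=f(x,\cdot)$ and, since $f=f^+$, hypothesis $(f_3)$ gives $\frac{1}{\theta}f(x,u_n)u_n-F(x,u_n)\ge 0$; on $\mathbb{R}^N\setminus\Omega'_\Upsilon$ one has $g(x,\cdot)=\tilde f(x,\cdot)$, and combining $\tilde f(x,t)t\ge 0$ with \eqref{til F estimate} yields $\frac{1}{\theta}\tilde f(x,u_n)u_n-\tilde F(x,u_n)\ge-\frac{\nu}{p_-}|u_n|^{p(x)}$. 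Estimating $\int_{\mathbb{R}^N}|u_n|^{p(x)}\le M^{-1}\varrho_\lambda(u_n)$ by \eqref{modular relation 1} (with $\mathbb{R}^N$ in place of $\mathcal{O}$), one arrives at
$$
\phi_\lambda(u_n)-\frac{1}{\theta}\phi_\lambda'(u_n)u_n\ \ge\ \beta\,\varrho_\lambda(u_n),\qquad \beta:=\frac{1}{p_+}-\frac{1}{\theta}-\frac{\nu}{p_-M}.
$$
The one point requiring care is to secure $\beta>0$: this is a further smallness requirement on $\nu$ (equivalently $\delta$ close to $1$ in the relation $M=(1-\delta)^{-1}\nu$), which can be imposed together with the restrictions on $\nu$ already used in Proposition \ref{mpg}.

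Finally, bound the left-hand side from above: for $n$ large $\phi_\lambda(u_n)-\frac{1}{\theta}\phi_\lambda'(u_n)u_n\le |d|+1+\frac{1}{\theta}\epsilon_n\|u_n\|_\lambda$, so $\beta\,\varrho_\lambda(u_n)\le C+\epsilon_n\|u_n\|_\lambda$. If $(u_n)$ were unbounded in $E_\lambda$, then along a subsequence $\|u_n\|_\lambda\ge 1$, and Proposition \ref{normxmodular}(i) gives $\varrho_\lambda(u_n)\ge\|u_n\|_\lambda^{p_-}$; dividing $\beta\|u_n\|_\lambda^{p_-}\le C+\epsilon_n\|u_n\|_\lambda$ by $\|u_n\|_\lambda^{p_-}$ and letting $n\to\infty$ (using $p_->1$ and $\epsilon_n\to 0$) would force $\beta\le 0$, a contradiction; hence $(u_n)$ is bounded. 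The main obstacle, as highlighted, is the bookkeeping of constants in the $\Omega'_\Upsilon$/complement splitting so that $\beta>0$; the only genuinely variable-exponent ingredient is that one must use the two-sided modular--norm comparison of Proposition \ref{normxmodular} rather than a single power of $\|\cdot\|_\lambda$.
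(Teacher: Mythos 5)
Your overall strategy is the same as the paper's: test with $\phi_\lambda(u_n)-\frac{1}{\theta}\phi_\lambda'(u_n)u_n$, use $(f_3)$ on $\Omega'_\Upsilon$, and handle the truncated nonlinearity on the complement. The difference lies in the pointwise estimate you use for the truncated part, and it has a real cost. You bound
$$
\frac{1}{\theta}\tilde f(x,t)t-\tilde F(x,t)\ \ge\ -\frac{\nu}{p_-}|t|^{p(x)},
$$
pull $\left(\frac{1}{p_+}-\frac{1}{\theta}\right)$ out of the gradient/potential term first, and then absorb the loss through $\int|u_n|^{p(x)}\le M^{-1}\varrho_\lambda(u_n)$, arriving at the coefficient $\beta=\frac{1}{p_+}-\frac{1}{\theta}-\frac{\nu}{p_-M}$. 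Making $\beta>0$ forces an \emph{additional} smallness condition $\nu<p_-M\left(\frac{1}{p_+}-\frac{1}{\theta}\right)$ beyond what the paper assumes; note this is not implied by the restriction on $\nu$ imposed in Proposition \ref{mpg} (when $\theta$ is close to $p_+$ your condition is much more stringent), so you are genuinely strengthening the hypotheses on the truncation. The paper avoids this entirely by using the sharper inequality
$$
\tilde F(x,t)-\frac{1}{\theta}\tilde f(x,t)t\ \le\ \left(\frac{1}{p(x)}-\frac{1}{\theta}\right)\nu|t|^{p(x)},
$$
whose left-hand factor matches the $\left(\frac{1}{p(x)}-\frac{1}{\theta}\right)$ in the expansion of $\phi_\lambda-\frac{1}{\theta}\phi_\lambda'$. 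Keeping that factor \emph{inside} the integral, the $\nu|u_n|^{p(x)}$ term is absorbed directly into $\big(\lambda V+Z\big)|u_n|^{p(x)}$ via \eqref{modular relation 2}, giving the lower bound $\left(\frac{1}{p_+}-\frac{1}{\theta}\right)\delta\,\varrho_\lambda(u_n)$ with $\delta=1-\nu/M>0$, which is positive for \emph{any} $\nu<M$ with no further restriction. (Establishing the sharper inequality requires a short computation on $\{t\ge a(x)\}$ using the defining relation $f(x,a(x))=\nu\,a(x)^{p(x)-1}$.) Your endgame, dividing by $\|u_n\|_\lambda^{p_-}$ and using $p_->1$, is fine and equivalent to the paper's use of $\max\{\varrho^{1/p_-},\varrho^{1/p_+}\}$. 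In short: the proof goes through, but only under an extra constraint on $\nu$ that the paper does not need; to reproduce the result at the paper's level of generality you should use the matched $\left(\frac{1}{p(x)}-\frac{1}{\theta}\right)$ estimate and \eqref{modular relation 2} rather than the crude $\frac{\nu}{p_-}$ bound.
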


\begin{proof}
   Let $ (u_n) $ be a $ (PS)_d $ sequence for $ \phi_\lambda $. So, there is $ n_0 \in \mathbb N $ such that
$$
   \phi_\lambda (u_n) - \frac{1}{\theta} \phi_\lambda'(u_n) u_n \le d+1 + \| u_n \|_\lambda, \text { for } n \ge n_0.
$$
On the other hand, by (\ref{t til f estimate}) and (\ref{til F estimate})
$$
   \tilde{F}(x,t) - \frac{1}{\theta} \tilde{f}(x,t)t \le \left( \frac{1}{p(x)} - \frac{1}{\theta} \right) \nu | t |^{ p(x) }, \, \forall x \in \mathbb R^N, t \in \mathbb R,
$$
which together with (\ref{modular relation 2}) gives 
$$
   \phi_\lambda (u_n) - \frac{1}{\theta} \phi_\lambda'(u_n) u_n \ge \left( \frac{1}{p_+} - \frac{1}{\theta} \right) \delta \varrho_\lambda (u_n), \, \forall n \in \mathbb N.
$$
Hence		
$$
   d+1 + \max \left\{ {\varrho_\lambda (u_n)}^{1/p_-}, {\varrho_\lambda (u_n)}^{1/p_+} \right\} \ge \left( \frac{1}{p_+} - \frac{1}{\theta} \right) \delta \varrho_\lambda (u_n), \, \forall n \ge n_0,
$$
from where it follows that $ (u_n) $ is bounded in $ E_\lambda $.
\end{proof}

\begin{proposition} \label{Estimativa no infinito}
If $(u_n)$ is a $(PS)_d$ sequence for $\phi_{\lambda}$, then given $\epsilon>0$, there is $R>0$ such that
\begin{equation} \label{Estimativa}
   \limsup_n \int_{ \mathbb R^N \setminus B_R (0) } \left( \big| \nabla u_n \big|^{ p(x) } + \big( \lambda V(x) + Z(x) \big) | u_n |^{ p(x) } \right) < \epsilon.
\end{equation} 
Hence, once that $g$ has a subcritical growth, if $ u \in E_\lambda $ is the weak limit of $ (u_n) $, then
$$
\int_{\mathbb R^N}g(x,u_n)u_n\,dx \to \int_{\mathbb R^N} g(x,u)u \, dx \, \text{ and } \, \int_{\mathbb R^N} g(x,u_n)v \, dx \to \int_{\mathbb R^N} g(x,u)v \, dx, \, \forall v \in E_\lambda.
$$
\end{proposition}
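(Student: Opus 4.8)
The plan is to establish the two assertions in turn; the first (the uniform decay estimate \eqref{Estimativa}) is the heart of the matter, and the second follows from it by a routine splitting argument. Since $(u_n)$ is a $(PS)_d$ sequence, Proposition \ref{boundedness} gives that it is bounded in $E_\lambda$, hence also in $W^{1,p(x)}\big(\mathbb R^N\big)$. Fix $R_0>0$ with $\overline{\Omega'_\Upsilon}\subset B_{R_0}(0)$, and for $R\ge R_0$ take $\psi_R\in C^\infty\big(\mathbb R^N\big)$ with $0\le\psi_R\le 1$, $\psi_R\equiv 0$ on $B_R(0)$, $\psi_R\equiv 1$ on $\mathbb R^N\setminus B_{2R}(0)$ and $|\nabla\psi_R|\le C/R$. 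The crucial structural fact is that $\mathrm{supp}\,\psi_R\subset\mathbb R^N\setminus\Omega'_\Upsilon$, where $\chi_\Upsilon\equiv 0$, so $g(x,u_n)=\tilde f(x,u_n)$ on $\mathrm{supp}\,\psi_R$ and therefore, by \eqref{t til f estimate}, $g(x,u_n)u_n\psi_R\le\nu|u_n|^{p(x)}\psi_R$.

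Testing $\phi_\lambda'(u_n)$ with $u_n\psi_R\in E_\lambda$ and expanding $\nabla(u_n\psi_R)=\psi_R\nabla u_n+u_n\nabla\psi_R$ yields
$$\int_{\mathbb R^N}\big(|\nabla u_n|^{p(x)}+(\lambda V+Z)|u_n|^{p(x)}\big)\psi_R=\phi_\lambda'(u_n)(u_n\psi_R)-\int_{\mathbb R^N}|\nabla u_n|^{p(x)-2}\nabla u_n\cdot u_n\nabla\psi_R+\int_{\mathbb R^N}g(x,u_n)u_n\psi_R.$$
Using the bound on $g(x,u_n)u_n\psi_R$, the pointwise inequality $|\nabla u_n|^{p(x)}+(\lambda V+Z)|u_n|^{p(x)}\ge M|u_n|^{p(x)}$ behind \eqref{modular relation 1}, and $M=(1-\delta)^{-1}\nu$ as in \eqref{modular relation 2}, I absorb the $\nu$-term and obtain
$$\delta\int_{\mathbb R^N}\big(|\nabla u_n|^{p(x)}+(\lambda V+Z)|u_n|^{p(x)}\big)\psi_R\le\big|\phi_\lambda'(u_n)(u_n\psi_R)\big|+\int_{\mathbb R^N}|\nabla u_n|^{p(x)-1}|u_n||\nabla\psi_R|.$$
Since $(u_n\psi_R)$ is bounded in $E_\lambda$ uniformly in $n$ and in $R\ge R_0$, and $\phi_\lambda'(u_n)\to 0$ in $E_\lambda'$, the first term is $o_n(1)$ for each fixed $R$; for the second, $|\nabla\psi_R|\le C/R$ is supported in $B_{2R}\setminus B_R$, and the generalized H\"older inequality (Proposition \ref{Holder's inequality}) together with Proposition \ref{estimate0} and the boundedness of $(u_n)$ bound $\int|\nabla u_n|^{p(x)-1}|u_n||\nabla\psi_R|$ by $C/R$, uniformly in $n$. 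As $\psi_R\equiv 1$ outside $B_{2R}(0)$, this gives $\limsup_n\int_{\mathbb R^N\setminus B_{2R}(0)}\big(|\nabla u_n|^{p(x)}+(\lambda V+Z)|u_n|^{p(x)}\big)\le C/(\delta R)$, and choosing $R$ large yields \eqref{Estimativa}.

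For the convergence statements, fix $R\ge R_0$, to be taken large, and split $\mathbb R^N=B_R(0)\cup\big(\mathbb R^N\setminus B_R(0)\big)$. On $B_R(0)$ the compact embedding $E_\lambda\hookrightarrow L^{h(x)}_{loc}\big(\mathbb R^N\big)$ for $h\ll p^*$, the subcritical growth $|g(x,t)|\le\epsilon|t|^{p(x)-1}+C_\epsilon|t|^{q(x)-1}$ of $g$ (which follows from \eqref{f estimate} and \eqref{til f estimate}), and continuity of the corresponding Nemytskii operator give $\int_{B_R}g(x,u_n)u_n\to\int_{B_R}g(x,u)u$ and $\int_{B_R}g(x,u_n)v\to\int_{B_R}g(x,u)v$. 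On $\mathbb R^N\setminus B_R(0)$ we have $g(x,u_n)u_n\le\nu|u_n|^{p(x)}\le\frac{\nu}{M}(\lambda V+Z)|u_n|^{p(x)}$, so by \eqref{Estimativa} the tail of $\int g(x,u_n)u_n$ is uniformly small in $n$; the tail of $\int g(x,u)u$ is small by Fatou's lemma applied along a subsequence converging a.e. to $u$; and $\int_{\mathbb R^N\setminus B_R}|g(x,u_n)v|\le\nu\int_{\mathbb R^N\setminus B_R}|u_n|^{p(x)-1}|v|$, which by H\"older and Proposition \ref{estimate0} is at most $C|v|_{L^{p(x)}(\mathbb R^N\setminus B_R)}\to 0$ as $R\to\infty$, uniformly in $n$. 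Combining the two regions and letting $n\to\infty$ and then $R\to\infty$ proves both limits.

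The main obstacle is the first claim: one has to exploit the penalization, which forces $g=\tilde f$ and hence $g(x,u_n)u_n\le\nu|u_n|^{p(x)}$ away from $\Omega'_\Upsilon$, in tandem with the coercivity margin encoded in the choice $\nu<(1-\delta)M$, so that the $\nu$-term can genuinely be absorbed. The only properly $p(x)$-flavoured technicality is the cross term $\int|\nabla u_n|^{p(x)-2}\nabla u_n\cdot u_n\nabla\psi_R$, for which the constant-exponent H\"older estimate must be replaced by Proposition \ref{Holder's inequality} together with the modular/norm comparison in Proposition \ref{estimate0}.
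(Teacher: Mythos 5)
Your proof follows the same strategy as the paper's: test $\phi_\lambda'(u_n)$ against $u_n$ times a cutoff supported outside $\Omega'_\Upsilon$, use that $g=\tilde f$ there together with \eqref{t til f estimate} to bound the nonlinear term by $\nu\int|u_n|^{p(x)}\psi_R$, absorb this via $\nu/M=1-\delta$, estimate the cross term with Proposition \ref{Holder's inequality} and Proposition \ref{estimate0}, and then combine the resulting uniform tail smallness with the compact embedding on bounded sets to get the two convergences. The only deviations are cosmetic (a cutoff transitioning on $B_{2R}\setminus B_R$ rather than $B_R\setminus B_{R/2}$, and invoking Fatou for the tail of $\int g(x,u)u$ where the paper simply cites $g(x,u)u\in L^1$), so the argument is correct and essentially identical to the paper's.
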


\begin{proof} 
   Let $ (u_n) $ be a $ (PS)_d $ sequence for $ \phi_\lambda $, $ R > 0 $ large such that $ \Omega'_\Upsilon \subset B_{ \frac{R}{2} }(0) $ and $ \eta_R \in C^\infty \big( \Bbb R^N \big) $ satisfying
$$
   \eta_R (x) =
	 \begin{cases}
	    0, \, x \in B_{ \frac{R}{2} }(0) \\
			1, \, x \in \Bbb R^N \setminus B_R (0)
	 \end{cases},
$$
$ 0 \le \eta_R \le 1 $ and $ \big| \nabla \eta_R \big| \le \dfrac{C}{R} $, where $ C > 0 $ does not depend on $ R $. This way,
\begin{align*}
   \mbox{}  & \int_{ \Bbb R^N } \left(  \big| \nabla u_n \big|^{ p(x) } + \big( \lambda V(x) + Z(x) \big) | u_n |^{ p(x) } \right) \eta_R \\
	   = & \phi_\lambda'(u_n) \left( u_n \eta_R \right) - \int_{ \Bbb R^N } u_n \big| \nabla u_n \big|^{ p(x)-2 } \nabla u_n \cdot \nabla \eta_R + \int_{ \Bbb R^N \setminus \Omega'_\Upsilon } \tilde{f}(x,u_n) u_n \eta_R.
\end{align*}
Denoting
$$
I =\int_{ \Bbb R^N } \left(  \big| \nabla u_n \big|^{ p(x) } + \big( \lambda V(x) + Z(x) \big) | u_n |^{ p(x) } \right) \eta_R,
$$ 
it follows from (\ref{t til f estimate}), 
$$
   I \le \phi_\lambda'(u_n) \left( u_n \eta_R \right) + \frac{C}{R} \int_{ \Bbb R^N } | u_n | \big| \nabla u_n \big|^{ p(x)-1 } + \nu \int_{ \Bbb R^N } | u_n |^{ p(x) } \eta_R.
$$
Using H\"older's inequality \ref{Holder's inequality} and Proposition \ref{estimate0}, we derive
$$
	 I \le \phi_\lambda'(u_n) \left( u_n \eta_R \right) + \frac{C}{R} | u_n |_{ p(x) } \max \left\{ \big| \nabla u_n \big|^{ p_- -1 }_{ p(x) }, \big| \nabla u_n \big|^{ p_+ -1 }_{ p(x) } \right\} + \frac{\nu}{M}I.
$$	
Since $ (u_n) $ and $ \Big( \big| \nabla u_n \big| \Big) $ are bounded in $ L^{ p(x) } \big( \Bbb R^N \big) $ and $\frac{\nu}{M}=1-\delta$, we obtain
$$
   \int_{ \Bbb R^N \setminus B_R(0) } \left(  \big| \nabla u_n \big|^{ p(x) } + \big( \lambda V(x) + Z(x) \big) | u_n |^{ p(x) } \right) \le o_n(1) + \frac{C}{R}.
$$
Therefore
$$
   \limsup_n \int_{ \Bbb R^N \setminus B_R(0) } \left(  \big| \nabla u_n \big|^{ p(x) } + \big( \lambda V(x) + Z(x) \big) | u_n |^{ p(x) } \right) \le \frac{C}{R}.
$$
So, given $ \epsilon > 0 $,  choosing a $ R > 0 $ possibly still bigger, we have that $ \dfrac{C}{R} < \epsilon $, which proves (\ref{Estimativa}). Now, we will show that 
$$
\int_{\Bbb R^N}g(x,u_n)u_n \to \int_{\Bbb R^N}g(x,u)u. 
$$
Using the fact that $g(x,u)u \in L^{1}(\Bbb R^N)$ together with (\ref{Estimativa}) and Sobolev embeddings, given  $\epsilon >0$, we can choose $R>0$ such that 
$$
\limsup_{n \to +\infty}\int_{\Bbb R^N \setminus B_R(0)}|g(x,u_n)u_n| \leq \frac{\epsilon}{4} \quad \mbox{and} \quad \int_{\Bbb R^N \setminus B_R(0)}|g(x,u)u| \leq \frac{\epsilon}{4}.
$$
On the other hand, since $g$ has a subcritical growth, we have by compact embeddings
$$
\int_{B_{R}(0)}g(x,u_n)u_n \to \int_{B_{R}(0)}g(x,u)u. 
$$
Combining the above informations, we conclude that
$$
\int_{\Bbb R^N}g(x,u_n)u_n \to \int_{\Bbb R^N}g(x,u)u.
$$ 
The same type of arguments works to prove that 
$$
\int_{\Bbb R^N}g(x,u_n)v \to \int_{\Bbb R^N}g(x,u)v \quad \forall v \in E_{\lambda}.
$$ 

\end{proof}

\begin{proposition} \label{(PS) condition}
   $ \phi_\lambda $ verifies the $ (PS) $ condition. 
\end{proposition}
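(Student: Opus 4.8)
The plan is to combine the boundedness of $(PS)_d$ sequences already obtained with the compactness at infinity of Proposition \ref{Estimativa no infinito} and the monotonicity of the principal operator, so that strong convergence reduces to the classical elementary inequalities for the map $\xi\mapsto|\xi|^{p-2}\xi$. Let $(u_n)$ be a $(PS)_d$ sequence for $\phi_\lambda$. By Proposition \ref{boundedness} it is bounded in $E_\lambda$, and since $E_\lambda$ is reflexive and compactly embedded in $L^{h(x)}_{loc}(\mathbb R^N)$ for $1\le h\ll p^*$, up to a subsequence we may assume $u_n\rightharpoonup u$ in $E_\lambda$, $u_n\to u$ in $L^{h(x)}_{loc}(\mathbb R^N)$ and $u_n(x)\to u(x)$ a.e.; the goal is $\|u_n-u\|_\lambda\to 0$.

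First I would test the differential. Since $\phi_\lambda'(u_n)\to 0$ in $E_\lambda^*$ and $(u_n-u)$ is bounded, $\phi_\lambda'(u_n)(u_n-u)=o_n(1)$, while Proposition \ref{Estimativa no infinito} applied with $v=u\in E_\lambda$ gives
$$
\int_{\mathbb R^N}g(x,u_n)(u_n-u)=\int_{\mathbb R^N}g(x,u_n)u_n-\int_{\mathbb R^N}g(x,u_n)u\to 0.
$$
Hence
$$
\int_{\mathbb R^N}\Big(|\nabla u_n|^{p(x)-2}\nabla u_n\cdot\nabla(u_n-u)+\big(\lambda V+Z\big)|u_n|^{p(x)-2}u_n(u_n-u)\Big)=o_n(1).
$$
Next I would note that the functional $w\mapsto\int_{\mathbb R^N}|\nabla u|^{p(x)-2}\nabla u\cdot\nabla w+(\lambda V+Z)|u|^{p(x)-2}u\,w$ lies in $E_\lambda^*$: by Hölder's inequality (Proposition \ref{Holder's inequality}) and Proposition \ref{estimate0} it is bounded by $\Big(\big|\,|\nabla u|^{p(x)-1}\,\big|_{p'(x)}+\big|\,(\lambda V+Z)^{1/p'(x)}|u|^{p(x)-1}\,\big|_{p'(x)}\Big)\|w\|_\lambda$, and both quantities are finite because $u\in E_\lambda$. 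Therefore $u_n\rightharpoonup u$ makes the same expression with $u_n$ replaced by $u$ tend to $0$, and subtracting the two relations yields
$$
\int_{\mathbb R^N}\big(|\nabla u_n|^{p(x)-2}\nabla u_n-|\nabla u|^{p(x)-2}\nabla u\big)\cdot\nabla(u_n-u)+\int_{\mathbb R^N}\big(\lambda V+Z\big)\big(|u_n|^{p(x)-2}u_n-|u|^{p(x)-2}u\big)(u_n-u)=o_n(1).
$$
Since $t\mapsto|t|^{p(x)-2}t$ is monotone and $\lambda V+Z\ge M>0$, both integrands are nonnegative, so each of the two integrals tends to $0$ separately.

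Finally I would convert this into modular convergence. Splitting $\mathbb R^N=\{p(x)\ge 2\}\cup\{1<p(x)<2\}$, on the first region one uses $(|a|^{p-2}a-|b|^{p-2}b)\cdot(a-b)\ge 2^{2-p}|a-b|^p\ge 2^{2-p_+}|a-b|^p$, and on the second region $(|a|^{p-2}a-|b|^{p-2}b)\cdot(a-b)\ge(p-1)|a-b|^2(|a|+|b|)^{p-2}\ge(p_--1)|a-b|^2(|a|+|b|)^{p-2}$ together with the identity $|a-b|^{p}=\big(|a-b|^2(|a|+|b|)^{p-2}\big)^{p/2}(|a|+|b|)^{p(2-p)/2}$ and the generalized Hölder inequality with exponents $2/p(x)$ and $2/(2-p(x))$, observing that $(|a|+|b|)^{p(x)}$ is integrable because $u_n,u\in W^{1,p(x)}(\mathbb R^N)$. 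Taking $(a,b)=(\nabla u_n,\nabla u)$ gives $\int_{\mathbb R^N}|\nabla u_n-\nabla u|^{p(x)}\to 0$, and $(a,b)=(u_n,u)$ carrying the weight $\lambda V+Z$ gives $\int_{\mathbb R^N}(\lambda V+Z)|u_n-u|^{p(x)}\to 0$; thus $\varrho_\lambda(u_n-u)\to 0$, and Proposition \ref{normxmodular} gives $\|u_n-u\|_\lambda\to 0$. The main obstacle is the region $\{1<p(x)<2\}$, where the monotonicity inequality degenerates and one must pass through Hölder's inequality in the variable exponent setting, checking admissibility of the exponents $2/p(x)$, $2/(2-p(x))$ and deducing convergence of the associated Luxemburg norm from convergence of the modular; uniformity in $x$ of all constants rests on $(H_1)$, while $(H_3)$--$(H_4)$ are what keep the weighted quantities in the right spaces. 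Alternatively, one may simply invoke the $(S_+)$ property of the $p(x)$-Laplacian, as in \cite{FanZhao0}.
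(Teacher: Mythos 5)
Your argument is correct and follows the same route as the paper's: boundedness from Proposition \ref{boundedness}, the convergence of the $g$-terms from Proposition \ref{Estimativa no infinito}, and the weak-limit information combine to give $\int_{\mathbb R^N}\big(P_n^1+(\lambda V+Z)P_n^2\big)\to 0$. The only difference is that where the paper closes by citing \cite{AlvesFerreira1}, you spell out the final step in full (the split into $\{p(x)\ge 2\}$ and $\{1<p(x)<2\}$, the Simon-type inequalities, and the variable-exponent H\"older argument yielding $\varrho_\lambda(u_n-u)\to 0$), which matches what that reference does.
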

		
\begin{proof}
Let $ (u_n) $ be a $ (PS)_d $ sequence for $ \phi_\lambda $ and $ u \in E_\lambda $ such that $u_n \rightharpoonup u$ in $E_{\lambda}$. Thereby, by Proposition \ref{Estimativa no infinito}
$$
   \int_{ \mathbb R^N } g(x,u_n)u_n \to \int_{ \mathbb R^N } g(x,u)u \, \text{ and } \, \int_{ \mathbb R^N } g(x,u_n)v \to \int_{ \mathbb R^N } g(x,u)v, \, \forall v \in E_\lambda.
$$
Moreover, the weak limit also give
$$
\int_{ \mathbb R^N } \big| \nabla u \big|^{ p(x)-2 } \nabla u \cdot \nabla ( u_n-u )  \to 0
$$
and
$$
\int_{ \mathbb R^N }   \big( \lambda V(x) + Z(x) \big) | u |^{ p(x)-2 }u ( u_n-u )  \to 0.
$$
Now, if  
$$
   P_n^1 (x) = \left( \big| \nabla u_n \big|^{ p(x)-2 } \nabla u_n - \big| \nabla u \big|^{ p(x)-2 } \nabla u \right) \cdot \left( \nabla u_n - \nabla u \right)
$$
and
$$
	 P_n^2 (x) = \left( | u_n|^{ p(x)-2 } u_n - | u |^{ p(x)-2 } u \right) ( u_n - u ),
$$
we derive
\begin{multline*}
   \int_{ \mathbb R^N } \! \! \Big( P_n^1(x) + \big( \lambda V(x) + Z(x) \big) P_n^2(x) \Big) = \phi_\lambda'(u_n)u_n + \int_{ \mathbb R^N } \! g(x,u_n)u_n - \phi_\lambda'(u_n)u - \int_{ \mathbb R^N } \! g(x,u_n)u \\
	 - \int_{ \mathbb R^N } \left( \big| \nabla u \big|^{ p(x)-2 } \nabla u \cdot \nabla ( u_n-u ) + \big( \lambda V(x) + Z(x) \big) | u |^{ p(x)-2 }u ( u_n-u ) \right).
\end{multline*}
Recalling that $\phi_\lambda'(u_n)u_n=o_n(1)$  and $\phi_\lambda'(u_n)u=o_n(1)$, the above limits lead to 
$$
   \int_{ \mathbb R^N } \Big( P_n^1(x) + \big( \lambda V(x) + Z(x) \big) P_n^2(x) \Big) \to 0.
$$
Now, the conclusion follows as in \cite{AlvesFerreira1}.
\end{proof}		

\begin{theorem} \label{TAlambda}
   The problem $ \big( A_\lambda \big) $ has a (nonnegative) solution, for all $ \lambda \ge 1 $.
\end{theorem}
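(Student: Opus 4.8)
\emph{Proof proposal.} The plan is to obtain $u_\lambda$ as a critical point of $\phi_\lambda$ at the mountain pass level and then to check that it is nonnegative. First one verifies that $\phi_\lambda\in C^1(E_\lambda,\mathbb R)$: this is routine once we combine the growth estimate (\ref{f estimate}), the bound (\ref{til f estimate}) on $\tilde f$ (so that $g$ inherits a subcritical growth controlled by $|t|^{p(x)-1}+|t|^{q(x)-1}$), and the continuous embeddings $E_\lambda\hookrightarrow W^{1,p(x)}(\mathbb R^N)\hookrightarrow L^{q(x)}(\mathbb R^N)$ coming from Proposition \ref{embedding theorem}. By Proposition \ref{mpg}, $\phi_\lambda$ has the mountain pass geometry, so there are $\rho,\alpha>0$ and $e\in E_\lambda$ with $\|e\|_\lambda>\rho$, $\phi_\lambda(e)<0$, and $\phi_\lambda(u)\ge\alpha$ whenever $\|u\|_\lambda=\rho$. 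Put
$$
\Gamma=\{\gamma\in C([0,1],E_\lambda)\,;\,\gamma(0)=0,\ \gamma(1)=e\},\qquad d=\inf_{\gamma\in\Gamma}\max_{t\in[0,1]}\phi_\lambda(\gamma(t)),
$$
so that $d\ge\alpha>0$.

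By the version of the Mountain Pass Theorem that does not presuppose the Palais--Smale condition, there is a $(PS)_d$ sequence $(u_n)\subset E_\lambda$ for $\phi_\lambda$. By Proposition \ref{(PS) condition} (whose proof rests on Propositions \ref{boundedness} and \ref{Estimativa no infinito}), $\phi_\lambda$ satisfies the $(PS)$ condition, so, along a subsequence, $u_n\to u_\lambda$ in $E_\lambda$ for some $u_\lambda\in E_\lambda$. Passing to the limit in $\phi_\lambda(u_n)\to d$ and $\phi_\lambda'(u_n)\to0$ we get $\phi_\lambda'(u_\lambda)=0$ and $\phi_\lambda(u_\lambda)=d>0$; in particular $u_\lambda\not\equiv0$, and $u_\lambda$ is a weak solution of $(A_\lambda)$.

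It remains to prove $u_\lambda\ge0$. Since $f$ was replaced by $f^+$ and $a_-=\inf_{\mathbb R^N}a>0$ by (\ref{a_}), for every $x$ and every $t\le0\le a(x)$ we have $\tilde f(x,t)=f(x,t)=0$, whence $g(x,t)=\chi_\Upsilon(x)f(x,t)+(1-\chi_\Upsilon(x))\tilde f(x,t)=0$ for all $t\le0$. Testing $\phi_\lambda'(u_\lambda)=0$ against $v=u_\lambda^-\in E_\lambda$ and using the a.e.\ identities $|\nabla u_\lambda|^{p(x)-2}\nabla u_\lambda\cdot\nabla u_\lambda^-=|\nabla u_\lambda^-|^{p(x)}$, $|u_\lambda|^{p(x)-2}u_\lambda\,u_\lambda^-=|u_\lambda^-|^{p(x)}$ and $g(x,u_\lambda)u_\lambda^-=0$, we obtain
$$
\varrho_\lambda(u_\lambda^-)=\int_{\mathbb R^N}\left(|\nabla u_\lambda^-|^{p(x)}+\big(\lambda V(x)+Z(x)\big)|u_\lambda^-|^{p(x)}\right)=0.
$$
By Proposition \ref{normxmodular} this forces $\|u_\lambda^-\|_\lambda=0$, i.e.\ $u_\lambda^-=0$, so $u_\lambda\ge0$ in $\mathbb R^N$. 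Since $\lambda\ge1$ was arbitrary, the theorem follows.

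I do not expect a serious obstacle: the compactness work that usually makes such a mountain pass scheme delicate has already been isolated in Propositions \ref{boundedness}, \ref{Estimativa no infinito} and \ref{(PS) condition}. The only points needing attention are the $C^1$ regularity of $\phi_\lambda$ (handled via (\ref{f estimate}) and the Sobolev embeddings) and the sign argument, whose crucial ingredient is that the truncation $f\mapsto f^+$ together with $a_->0$ makes $g(x,\cdot)$ vanish on $(-\infty,0]$.
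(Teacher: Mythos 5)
Your proposal is correct and follows the same route as the paper: the paper's proof is the one-line invocation of the Ambrosetti--Rabinowitz Mountain Pass Theorem, which rests on Propositions \ref{mpg} and \ref{(PS) condition} exactly as you describe. Your write-up additionally spells out the nonnegativity of the critical point (testing $\phi_\lambda'(u_\lambda)=0$ against $u_\lambda^-$ and using that $g(x,\cdot)$ vanishes on $(-\infty,0]$ thanks to the $f^+$ replacement and $a_->0$), a step the paper leaves implicit but which is indeed needed and correctly carried out.
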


\begin{proof}
   The proof is an immediate consequence of the Mountain Pass Theorem due to Ambrosetti \& Rabinowitz \cite{AmRb}. 
\end{proof}


\section{The $ (PS)_\infty $ condition}


A sequence $ (u_n) \subset  W^{ 1,p(x) } \big( \mathbb R^N \big) $ is called a $ (PS)_\infty $ \emph{sequence for the family} $ \left( \phi_\lambda \right)_{\lambda \ge 1} $, if there is a sequence $ ( \lambda_n ) \subset  [1, \infty) $ with $ \lambda_n \to \infty $, as $ n \to \infty $, verifying 
$$
   \phi_{ \lambda_n }(u_n) \to c \text{ and } \left\| \phi'_{ \lambda_n }(u_n) \right\| \to 0, \text{ as } n \to \infty.
$$

\begin{proposition} \label{(PS) infty condition}
   Let $ (u_n) \subset W^{ 1,p(x) } \big( \mathbb R^N \big) $ be a $ (PS)_\infty $ sequence for $ \left( \phi_\lambda \right)_{\lambda \ge 1} $. Then, up to a subsequence, there exists $ u \in W^{ 1,p(x) } \big( \mathbb R^N \big) $ such that $ u_n \rightharpoonup u $ in $ W^{ 1,p(x) } \big( \mathbb R^N \big) $. Furthermore,
\begin{enumerate}
  \item[(i)] $ \varrho_{ \lambda_n } (u_n-u) \to 0 $ and, consequently, $ u_n \to u $ in $ W^{ 1,p(x) } \big( \mathbb R^N \big) $;
	\item[(ii)] $ u = 0 $ in $ \mathbb R^N \setminus \Omega_\Upsilon $, $ u \ge 0 $ and $ u_{|_{\Omega_j}}, \, j \in \Upsilon $, is a solution for
   $$
      (P_j) \;
	    \begin{cases}
		     - \Delta_{ p(x) } u + Z(x) | u |^{ p(x)-2 } u  = f(x,u), \text{ in } \Omega_j, \\
		     u \in W^{ 1,p(x) }_0 \big( \Omega_j \big);
			\end{cases}
   $$
	 \item[(iii)] $ \displaystyle \int_{\mathbb R^N} \lambda_n V(x) | u_n |^{ p(x) } \to 0 $;
	 \item[(iv)] $ \varrho_{ \lambda_n, \Omega'_j } (u_n) \to \displaystyle \int_{ \Omega_j } \left( \big| \nabla u \big|^{ p(x) } + Z(x) | u |^{ p(x) } \right), \text{ for } j \in \Upsilon $;
   \item[(v)] $ \varrho_{ \lambda_n, \mathbb R^N \setminus \Omega_\Upsilon } (u_n) \to 0 $;
	 \item[(vi)] $ \phi_{ \lambda_n } (u_n) \to \displaystyle \int_{ \Omega_\Upsilon } \frac{1}{p(x)} \left( \big| \nabla u \big|^{ p(x) } + Z(x) | u |^{ p(x) } \right) - \int_{ \Omega_\Upsilon } F(x,u) $.
\end{enumerate}
\end{proposition}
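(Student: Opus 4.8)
The plan is to run the del Pino--Felmer penalization argument in the potential-well setting: extract the profile $u$ as a weak limit, upgrade weak to strong convergence by the monotonicity of the $p(x)$-Laplacian, and then read off (ii)--(vi).

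\emph{Stage 1 (boundedness, sign, first localization).} Since $\lambda_n\ge 1$, hypothesis $(H_3)$ gives $\lambda_n V+Z\ge M$, so $\varrho_{\lambda_n}(u_n)\ge M\varrho_{p(x)}(u_n)$ and the embedding $E_{\lambda_n}\hookrightarrow W^{1,p(x)}(\mathbb R^N)$ holds with constant independent of $n$; repeating verbatim the computation in Proposition \ref{boundedness} (which only used $\lambda\ge 1$) shows $\varrho_{\lambda_n}(u_n)$ is bounded, hence $(u_n)$ is bounded in $W^{1,p(x)}(\mathbb R^N)$. Up to a subsequence, $u_n\rightharpoonup u$ in $W^{1,p(x)}(\mathbb R^N)$, $u_n\to u$ in $L^{h(x)}_{loc}$ for $1\le h\ll p^*$, and $u_n(x)\to u(x)$ a.e. Testing $\phi'_{\lambda_n}(u_n)$ against $u_n^-$ and using $g(x,t)=0$ for $t\le 0$ gives $\varrho_{\lambda_n}(u_n^-)=o_n(1)\|u_n^-\|_{\lambda_n}\to 0$, so $u^-=0$, i.e. $u\ge 0$. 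Finally, from $\int_{\mathbb R^N}\lambda_n V|u_n|^{p(x)}\le\varrho_{\lambda_n}(u_n)\le C$ one gets $\int_{\{V\ge 1/m\}}|u_n|^{p(x)}\le Cm/\lambda_n\to 0$ for each $m$, and Fatou forces $u=0$ a.e. on $\{V>0\}=\mathbb R^N\setminus V^{-1}(0)$; since $u\in W^{1,p(x)}(\mathbb R^N)$ and the components are separated, $u=0$ a.e. outside $\Omega$ and $u|_{\Omega_j}\in W^{1,p(x)}_0(\Omega_j)$.

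\emph{Stage 2 (item (i): strong convergence).} First I re-derive, uniformly in $\lambda_n\ge 1$, the tail estimate of Proposition \ref{Estimativa no infinito}: its proof uses only $\nu/M=1-\delta<1$, the penalization $\tilde f(x,t)t\le\nu|t|^{p(x)}$ outside $\Omega'_\Upsilon$, boundedness in $L^{p(x)}$, and $\|\phi'_{\lambda_n}(u_n)\|\to 0$, all valid for every $\lambda\ge 1$; this yields $\limsup_n\varrho_{\lambda_n,\mathbb R^N\setminus B_R}(u_n)<\epsilon$ for $R$ large, and combined with the local compact embeddings and the subcritical growth of $g$ gives $\int_{\mathbb R^N}g(x,u_n)(u_n-u)\to 0$ (and $\int g(x,u_n)\varphi\to\int g(x,u)\varphi$ for fixed test functions $\varphi$). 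Next, since $(u_n-u)$ is bounded, $\phi'_{\lambda_n}(u_n)(u_n-u)=o_n(1)$; expanding exactly as in Proposition \ref{(PS) condition} and using the $g$-limit together with the weak convergence yields $\int_{\mathbb R^N}\big(P_n^1(x)+(\lambda_n V+Z)P_n^2(x)\big)\to 0$, where $P_n^1,P_n^2\ge 0$ by the monotonicity of $\xi\mapsto|\xi|^{p(x)-2}\xi$; since $\lambda_n V+Z\ge M>0$, both integrals tend to $0$ separately. From $\int P_n^1\to 0$ and $\int(\lambda_n V+Z)P_n^2\to 0$ one then deduces $\varrho_{\lambda_n}(u_n-u)\to 0$ by the elementary inequalities relating $P_n^i$ to $|\nabla u_n-\nabla u|^{p(x)}$ and $|u_n-u|^{p(x)}$ used in \cite{AlvesFerreira1} and \cite{FanZhao0}; with the uniform embedding $E_{\lambda_n}\hookrightarrow W^{1,p(x)}(\mathbb R^N)$ this gives $u_n\to u$ in $W^{1,p(x)}(\mathbb R^N)$. \textbf{I expect this stage to be the main obstacle}, precisely because $\lambda_n\to\infty$ makes the norm $\|\cdot\|_{\lambda_n}$ move with $n$, so one cannot invoke an $(S^+)$-property in a fixed space and must carry the monotonicity argument out at the level of the $\lambda_n$-modular while keeping all constants uniform in $\lambda_n$.

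\emph{Stage 3 (items (ii)--(vi)).} Item (iii): $\int\lambda_n V|u_n-u|^{p(x)}\le\varrho_{\lambda_n}(u_n-u)\to 0$ and $\int\lambda_n V|u|^{p(x)}=0$, so $|u_n|^{p(x)}\le 2^{p_+-1}(|u_n-u|^{p(x)}+|u|^{p(x)})$ gives $\int\lambda_n V|u_n|^{p(x)}\to 0$. Item (ii): on $\Omega'_j$ with $j\in\Upsilon$ one has $g(x,\cdot)=f(x,\cdot)$ and $u_n\to u$ strongly, so passing to the limit in $\phi'_{\lambda_n}(u_n)\varphi=o_n(1)$ for $\varphi\in C_0^\infty(\Omega_j)$ shows $u|_{\Omega_j}$ solves $(P_j)$; on $\Omega_j$ with $j\notin\Upsilon$ one has $g(x,\cdot)=\tilde f(x,\cdot)$, $V\equiv 0$, $Z\ge M$, so testing the limit equation with $u$ and using $\tilde f(x,u)u\le\nu|u|^{p(x)}$ with $\nu<M=(1-\delta)^{-1}\nu$ forces $u\equiv 0$ there, whence $u=0$ outside $\Omega_\Upsilon$. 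Items (iv) and (v): since $u=0$ on $\Omega'_j\setminus\Omega_j$, $V=0$ on $\Omega_j$, and $\varrho_{\lambda_n,A}(u_n-u)\to 0$ for every $A$, applying $\big|\,|a|^{p(x)}-|b|^{p(x)}\,\big|\le\epsilon|b|^{p(x)}+C_\epsilon|a-b|^{p(x)}$ to the three densities yields $\varrho_{\lambda_n,\Omega'_j}(u_n)\to\int_{\Omega_j}(|\nabla u|^{p(x)}+Z|u|^{p(x)})$ and $\varrho_{\lambda_n,\mathbb R^N\setminus\Omega_\Upsilon}(u_n)\to 0$. Item (vi): in $\phi_{\lambda_n}(u_n)=\int\frac1{p(x)}\big(|\nabla u_n|^{p(x)}+(\lambda_n V+Z)|u_n|^{p(x)}\big)-\int G(x,u_n)$ the first integral converges to $\int_{\Omega_\Upsilon}\frac1{p(x)}(|\nabla u|^{p(x)}+Z|u|^{p(x)})$ by (iii) and the same inequality (with $\frac1{p(x)}$ and $Z$ bounded), while $\int G(x,u_n)\to\int_{\Omega_\Upsilon}F(x,u)$ because $G=F$ on the bounded set $\Omega'_\Upsilon$ (compact embedding), $G=\tilde G\le\frac{\nu}{p(x)}|t|^{p(x)}$ outside it, $u$ is supported in $\Omega_\Upsilon$, and (v) controls the exterior tail; this gives (vi) and identifies $c$.
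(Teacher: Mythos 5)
Your proposal is correct and follows essentially the same route as the paper: boundedness via the $(f_3)$-based modular estimate, the $C_m=\{V\ge 1/m\}$/Fatou localization of the weak limit, the monotonicity ($P_n^1,P_n^2$) argument at the level of the $\lambda_n$-modular to upgrade to strong convergence, the $j\notin\Upsilon$ test to kill the profile outside $\Omega_\Upsilon$, and the Brezis–Lieb/decomposition bookkeeping for (iii)–(vi). The one small deviation is that you obtain $u\ge 0$ by testing $\phi'_{\lambda_n}(u_n)$ against $u_n^-$, which is a slightly more explicit route than the paper (which simply asserts nonnegativity after showing $u|_{\Omega_j}$ solves the limit problem with $f=f^+$), and the uniformity in $\lambda_n$ that you flag as a potential obstacle is indeed handled, as you say, by working with modulars and the lower bound $\lambda_n V+Z\ge M$ rather than with the $\lambda_n$-dependent norms.
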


\begin{proof}
   Using the same reasoning as in the proof of Proposition \ref{boundedness}, we obtain that $ \big( \varrho_{ \lambda_n }(u_n) \big) $ is bounded in $ \mathbb R $. Then $ \big( \| u_n \|_{ \lambda_n } \big) $ is bounded in $ \mathbb R $ and $ (u_n) $ is bounded in $ W^{ 1,p(x) } \big( \mathbb R^N \big) $. So, up to a subsequence, there exists $ u \in W^{ 1,p(x) } \big( \mathbb R^N \big) $ such that
$$
   u_n \rightharpoonup u  \text{ in } W^{ 1,p(x) } \big( \mathbb R^N \big) \, \text{ and } \, u_n(x) \to u(x) \text{ for a.e. } x \in \mathbb R^N.
$$
Now, for each $ m \in \mathbb N $, we define $ C_m = \left\{ x \in \mathbb R^N \, ; \, V(x) \ge \dfrac{1}{m} \right\} $. Without loss of generality, we can assume $ \lambda_n < 2 ( \lambda_n-1 ), \, \forall n \in \mathbb N $. Thus
	       $$
		       \int_{ C_m } | u_n |^{ p(x) } \le \frac{2m}{\lambda_n} \int_{ C_m } \big( \lambda_n V(x)+Z(x) \big) | u_n |^{ p(x) } \le \frac{2m}{\lambda_n} \varrho_{ \lambda_n }(u_n) \le \frac{C}{\lambda_n}.
	       $$
By Fatou's lemma, we derive
	       $$
			     \int_{ C_m } | u |^{ p(x) } = 0,
	       $$
	       which implies that $ u = 0 $ in $ C_m $ and, consequently, $ u = 0 $ in $ \mathbb R^N \setminus \overline{\Omega} $. From this, we are able to prove $(i)-(vi)$. 
	       
\begin{enumerate}
   \item[$(i)$] Since $ u = 0 $  in $ \mathbb R^N \setminus \overline{\Omega} $, repeating the argument explored in  Proposition \ref{(PS) condition} we get
	       $$
            \int_{ \mathbb R^N } \Big( P_n^1(x) + \big( \lambda_n V(x) + Z(x) \big) P_n^2(x) \Big) \to 0,
         $$
         where
         $$
            P_n^1 (x) = \left( \big| \nabla u_n \big|^{ p(x)-2 } \nabla u_n - \big| \nabla u \big|^{ p(x)-2 } \nabla u \right) \cdot \left( \nabla u_n - \nabla u \right) \\
	        $$
					and
					$$
	          P_n^2 (x) = \left( | u_n|^{ p(x)-2 } u_n - | u |^{ p(x)-2 } u \right) ( u_n - u ).
         $$
         Therefore, $ \varrho_{ \lambda_n } ( u_n-u ) \to 0 $, which implies $ u_n \to u $ in $ W^{ 1,p(x) } \big( \mathbb R^N \big) $.
	 \item[$(ii)$] Since $ u \in W^{ 1,p(x) } \big( \mathbb R^N \big) $ and $ u = 0 $  in $ \mathbb R^N \setminus \overline{\Omega} $, we have $ u \in W^{ 1,p(x) }_0 \big( \Omega \big) $ or, equivalently, $ u_{ |_{\Omega_j} } \in W^{ 1,p(x) }_0 \big( \Omega_j \big) $, for $ j = 1, \ldots, k $. Moreover, the limit $u_n \to u$ in $W^{1,p(x)}(\mathbb R^N)$ combined with $\phi'_{\lambda_n}(u_n)\varphi \to 0$  for $\varphi \in C^{\infty}_0 \big( \Omega_j \big)$ implies that  
         \begin{equation} \label{u is solution}	
	          \int_{\Omega_j} \left( \big| \nabla u \big|^{ p(x)-2 } \nabla u \cdot \nabla \varphi + Z(x) | u |^{ p(x)-2 } u \varphi \right) - \int_{\Omega_j} g(x,u) \varphi = 0,
         \end{equation}
showing that $ u_{ |_{\Omega_j} }  $ is a solution for
         $$
				 \begin{cases}
		        - \Delta_{ p(x) } u + Z(x) | u |^{ p(x)-2 } u  = g(x,u), \text{ in } \Omega_j, \\
		        u \in W^{ 1,p(x) }_0 \big( \Omega_j \big).
		       \end{cases}
         $$
				 This way, if $ j \in \Upsilon $, then $ u_{ |_{\Omega_j} } $ satisfies $ (P_j) $. On the other hand, if $ j \notin \Upsilon $, we must have 
				 $$
				   \int_{ \Omega_j } \left( \big| \nabla u \big|^{ p(x) } + Z(x) | u |^{ p(x) } \right) - \int_{ \Omega_j } \tilde{f}(x,u)u = 0.
				 $$
				The above equality combined with (\ref{t til f estimate}) and (\ref{modular relation 2}) gives  
			 	 $$
				    0 \ge \varrho_{ \lambda, \Omega_j }(u) - \nu \varrho_{ p(x), \Omega_j }(u) \ge \delta \varrho_{ \lambda, \Omega_j }(u) \ge 0, 
				 $$
				 from where it follows $ u_{|_{\Omega_j}} = 0 $.  This proves $ u = 0 $ outside $ \Omega_\Upsilon $ and $ u \ge 0 $ in $ \Bbb R^N $.
				
	\item[$(iii)$]  It follows from (i), since
	       $$
		        \int_{ \mathbb R^N } \lambda_n V(x) | u_n |^{ p(x) } = \int_{ \mathbb R^N } \lambda_n V(x) | u_n-u |^{ p(x) } \le 2 \varrho_{ \lambda_n }(u_n-u).
	       $$
	\item[$(iv)$] Let $ j \in \Upsilon $. From (i), 
	       $$
				    \varrho_{ p(x), \Omega'_j }( u_n-u ), \varrho_{ p(x), \Omega'_j } \big( \nabla u_n - \nabla u \big) \to 0.
				 $$
				 Then by  Proposition \ref{first Brezis-Lieb}, 
				 $$
				    \int_{ \Omega'_j } \big( \big| \nabla u_n \big|^{ p(x) } - \big| \nabla u \big|^{ p(x) } \big) \to 0 \quad \mbox{and} \quad \int_{ \Omega'_j } Z(x) \big( | u_n |^{ p(x) } - | u |^{ p(x) } \big) \to 0.
				 $$
				 From (iii),
				 $$
				    \int_{ \Omega'_j } \lambda_n V(x) \big( | u_n |^{ p(x) } - | u |^{ p(x) } \big) = \int_{ \Omega'_j \setminus \overline{\Omega_j} } \lambda_n V(x) | u_n |^{ p(x) } \to 0.
			   $$
				 This way
				 $$
				   \varrho_{ \lambda_n, \Omega'_j } (u_n) - \varrho_{ \lambda_n, \Omega'_j } (u) \to 0.
				 $$
				 Once $ u = 0 \text{ in } \Omega'_j \setminus \Omega_j $, we get
				 $$
				    \varrho_{ \lambda_n, \Omega'_j } (u_n) \to \int_{ \Omega_j } \left( | \nabla u |^{ p(x) } + Z(x) | u |^{ p(x) } \right).
				 $$
	 \item[$(v)$] By (i),  $ \varrho_{ \lambda_n }( u_n-u ) \to 0 $, and so, 
	              $$
								   \varrho_{ \lambda_n, \mathbb R^N \setminus \Omega_\Upsilon }(u_n) \to 0. 
								$$
	 \item[$(vi)$] We can write the functional  $\phi_{\lambda_n}$ in the following way
	       \begin{multline*}
	          \phi_{ \lambda_n } (u_n) = \sum_{ j \in \Upsilon } \int_{ \Omega'_j } \frac{1}{p(x)} \left( \big| \nabla u_n \big|^{ p(x) } + \big( \lambda_n V(x) + Z(x) \big) | u_n |^{ p(x) } \right) \\
						+ \int_{ \mathbb R^N \setminus \Omega'_\Upsilon } \frac{1}{p(x)} \left( \big| \nabla u_n \big|^{ p(x) } + \big( \lambda_n V(x) + Z(x) \big) | u_n |^{ p(x) } \right) - \int_{ \mathbb R^N } G(x,u_n).
				 \end{multline*}
				 From $(i)-(v)$, 
				 $$
					  \int_{ \Omega'_j } \frac{1}{p(x)} \left( \big| \nabla u_n \big|^{ p(x) } + \big( \lambda_n V(x) + Z(x) \big) | u_n |^{ p(x) } \right) \to \int_{ \Omega_j } \frac{1}{p(x)} \left( \big| \nabla u \big|^{ p(x) } +  Z(x) | u |^{ p(x) } \right),
				 $$
				 $$
					 \int_{ \mathbb R^N \setminus \Omega'_\Upsilon } \frac{1}{p(x)} \left( \big| \nabla u_n \big|^{ p(x) } + \big( \lambda_n V(x) + Z(x) \big) | u_n |^{ p(x) } \right) \to 0.
				 $$
				 and 
				 $$
					  \int_{ \mathbb R^N } G(x,u_n) \to \int_{ \Omega_\Upsilon } F(x,u).
				 $$
				 Therefore
				 $$
					  \phi_{ \lambda_n } (u_n) \to \int_{ \Omega_\Upsilon } \frac{1}{p(x)} \left( | \nabla u |^{ p(x) } + Z(x) | u |^{ p(x) } \right) - \int_{ \Omega_\Upsilon } F(x,u).
				 $$
\end{enumerate}	
\end{proof}


\section{The boundedness of the $ \big( A_\lambda \big) $ solutions}


In this section, we study the boundedness outside $ \Omega'_\Upsilon $ for some solutions of $ \big( A_\lambda \big ) $. To this end, we adapt for our problem arguments found in \cite{FanZhao0} and \cite{FuscoSbordone}. 

\begin{proposition} \label{P:boundedness of the solutions}
    Let $ \big( u_\lambda \big) $ be a family of solutions for $ \big( A_\lambda \big) $ such that $ u_\lambda \to 0 $ in $ W^{ 1,p(x) } \big( \mathbb R^N \setminus \Omega_\Upsilon \big) $, as $ \lambda \to \infty $. Then, there exists $ \lambda^* > 0 $ with the following property:
$$
   \left| u_\lambda \right|_{ \infty, \mathbb R^N \setminus \Omega'_\Upsilon } \le a_-, \, \forall \lambda \ge \lambda^*.
$$
Hence, $u_{\lambda}$ is a solution for $(P_\lambda)$ for $\lambda \geq \lambda^*$.
\end{proposition}

\vspace{.3cm}
Before to prove the above proposition, we need to show some technical lemmas. 

\begin{lemma} \label{boundary's cover}
   There exist $ x_1, \ldots, x_l \in \partial \Omega'_\Upsilon $ and corresponding  $ \delta_{x_1}, \ldots, \delta_{x_l} > 0 $ such that
$$
   \partial \Omega'_\Upsilon \subset {\cal N} \left( \partial \Omega'_\Upsilon \right)  : = \bigcup_{ i=1 }^l B_{ \frac{\delta_{x_i}}{2} } (x_i).
$$
Moreover,
\begin{equation} \label{qplus x pminus}
   q^{x_i}_+ \le \big( p^{x_i}_- \big)^*,
\end{equation}
where
$$
   q^{x_i}_+ = \sup_{ B_{ \delta_{x_i} }({x_i}) } q, \ p^{x_i}_- = \inf_{ B_{\delta_{x_i}}(x_i) } p  \text{ \, and \, } \big( p^{x_i}_- \big)^* = \frac{N p^{x_i}_-}{N-p^{x_i}_-}.
$$
\end{lemma}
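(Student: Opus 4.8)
The plan is to establish the pointwise statement first---for each $x_0 \in \partial\Omega'_\Upsilon$ to produce a radius $\delta_{x_0} > 0$ for which the analogue of (\ref{qplus x pminus}) holds on $B_{\delta_{x_0}}(x_0)$---and then to cover the compact set $\partial\Omega'_\Upsilon$ by finitely many of the half-balls $B_{\delta_{x_0}/2}(x_0)$. The only ingredient beyond point-set topology is the open-gap hypothesis $q \ll p^*$ from $(f_1)$.

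I would begin by noting that, since $p_+ < N$, the critical exponent $p^*(x) = \frac{Np(x)}{N-p(x)}$ is a finite continuous function on $\mathbb R^N$, and $q \ll p^*$ furnishes a constant $\sigma > 0$ with $p^*(x) - q(x) \ge \sigma$ for every $x \in \mathbb R^N$. Now fix $x_0 \in \partial\Omega'_\Upsilon$ and set $p_0 = p(x_0) \in (1,N)$. The auxiliary function $\psi(t) := \frac{Nt}{N-t}$ is continuous and strictly increasing on $(0,N)$ with $\psi(p_0) = p^*(x_0)$, and $\inf_{B_\delta(x_0)} p \to p_0$ as $\delta \to 0^+$ because $p$ is continuous. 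Hence, using also the continuity of $q$, I can choose $\delta_{x_0} > 0$ so small that
$$
\sup_{B_{\delta_{x_0}}(x_0)} q \le q(x_0) + \frac{\sigma}{2} \quad\text{and}\quad \psi\Big( \inf_{B_{\delta_{x_0}}(x_0)} p \Big) \ge p^*(x_0) - \frac{\sigma}{2}.
$$
Combining these with $q(x_0) \le p^*(x_0) - \sigma$ gives
$$
q^{x_0}_+ = \sup_{B_{\delta_{x_0}}(x_0)} q \le p^*(x_0) - \frac{\sigma}{2} \le \psi\Big( \inf_{B_{\delta_{x_0}}(x_0)} p \Big) = \big( p^{x_0}_- \big)^*,
$$
which is (\ref{qplus x pminus}) centered at $x_0$.

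Finally, $\{ B_{\delta_{x_0}/2}(x_0) : x_0 \in \partial\Omega'_\Upsilon \}$ is an open cover of $\partial\Omega'_\Upsilon$, which is compact since $\Omega'_\Upsilon$ is bounded; extracting a finite subcover yields points $x_1, \ldots, x_l \in \partial\Omega'_\Upsilon$ with $\partial\Omega'_\Upsilon \subset \bigcup_{i=1}^l B_{\delta_{x_i}/2}(x_i)$, and the radii $\delta_{x_1}, \ldots, \delta_{x_l}$ already satisfy (\ref{qplus x pminus}). I do not expect any real obstacle here: the argument is compactness plus continuity, and the single subtle point is that the gap in $q \ll p^*$ must be used in its uniform form over $\mathbb R^N$, and that the monotonicity and continuity of $t \mapsto t^*$ are precisely what let shrinking $B_{\delta_{x_0}}(x_0)$ push $(p^{x_0}_-)^*$ up toward $p^*(x_0)$ rather than leaving a deficit.
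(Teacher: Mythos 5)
Your proof is correct and follows essentially the same route as the paper's: combine the uniform gap furnished by $q \ll p^*$ with the continuity of $p$ and $q$ to obtain a suitable radius at each boundary point, then extract a finite subcover by compactness of $\partial\Omega'_\Upsilon$; your explicit $\sigma/2$ split and your invocation of the monotone, continuous map $t \mapsto \frac{Nt}{N-t}$ simply unpack what the paper compresses into ``by continuity.'' The one extra step the paper takes is to first fix $\delta > 0$ with $\overline{B_\delta(x)} \subset \mathbb R^N \setminus \overline{\Omega_\Upsilon}$ for every $x \in \partial\Omega'_\Upsilon$ (possible since $\overline{\Omega_\Upsilon} \subset \Omega'_\Upsilon$) and then to require $\delta_{x_0} \le \delta$. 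That constraint is not part of the lemma's statement, so your argument does prove exactly what is claimed; but it is used silently in Lemma \ref{boundedness of the solutions}, where the hypothesis $u_\lambda \to 0$ in $W^{1,p(x)}\big(\mathbb R^N \setminus \Omega_\Upsilon\big)$ controls $J_0(\lambda)$ precisely because each $B_{\delta_{x_i}}(x_i)$ lies entirely in $\mathbb R^N \setminus \Omega_\Upsilon$. If you intend your version to serve the rest of Section 5, cap each $\delta_{x_0}$ by such a $\delta$.
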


\begin{proof}
   From \eqref{omega}, $ \overline{\Omega_\Upsilon} \subset \Omega'_\Upsilon $. So, there is $ \delta > 0 $ such that
$$
   \overline{B_{\delta}(x)} \subset \mathbb R^N \setminus \overline{\Omega_\Upsilon}, \, \forall x \in \partial \Omega'_\Upsilon.
$$
Once $ q \ll p^* $, there exists $ \epsilon > 0 $ such that $  \epsilon \le p^*(y) - q(y) $, for all $ y \in \mathbb R^N $. Then, by continuity, for each $ x \in \partial \Omega'_\Upsilon $ we can choose a sufficiently small $ 0 < \delta_x \le \delta $ such that
$$
   q^x_+ \le \big( p^x_- \big)^*,
$$
where
$$
   q^x_+ = \sup_{ B_{ \delta_x }(x) } q, \ p^x_- = \inf_{ B_{ \delta_x }(x) } p  \text{ \, and \, } \big( p^x_- \big)^* = \frac{N p^x_-}{N-p^x_-}.
$$
Covering $ \partial \Omega'_\Upsilon $ by the balls $ B_{ \frac{\delta_x}{2} }(x), \, x \in \partial \Omega'_\Upsilon $, and using its compactness, there are $ x_1, \ldots, x_l \in \partial \Omega'_\Upsilon $ such that
$$
   \partial \Omega'_\Upsilon \subset \bigcup_{ i=1 }^l  B_{ \frac{\delta_{x_i}}{2} }(x_i).
$$
\end{proof}

\begin{lemma} \label{good estimate}
   If $ u_\lambda $ is a solution for $ \big( A_\lambda \big) $, in each $ B_{ \delta_{x_i} }(x_i), \, i = 1, \ldots, l $, given by Lemma \ref{boundary's cover}, it is fulfilled
$$
    \int_{ A_{k,\overline{\delta},x_i} } \big| \nabla u_\lambda \big|^{ p^{x_i}_- } \le C \Bigg( \big( k^{ q_+ } + 2 \big) \big| { A_{k,\widetilde{\delta},x_i} } \big| + \left(\widetilde{\delta}-\overline{\delta} \right)^{ -\big( p^{x_i}_- \big)^* } \int_{ A_{k,\widetilde{\delta},x_i} } \left( u_\lambda-k \right)^{ \big( p^{x_i}_- \big)^* } \Bigg),
$$
where $ 0 < \overline{\delta} < \widetilde{\delta} < \delta_{ x_i } $, $ k \ge \dfrac{a_-}{4} $, $ C = C \big( p_-, p_+, q_-, q_+, \nu, \delta_{ x_i } \big) > 0 $ is a constant independent of $ k $, and for any $ R > 0 $, we denote by $A_ { k,R,x_i }$ the set 
$$
    A_ { k,R,x_i } = B_R(x_i) \cap \left\{ x \in \mathbb R^N \, ; \, u_\lambda(x) > k \right\}.
$$
\end{lemma}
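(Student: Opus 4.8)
The plan is to derive the Caccioppoli-type inequality in Lemma~\ref{good estimate} by testing the equation $\big(A_\lambda\big)$ against a suitable truncation of $u_\lambda$ localized in the ball $B_{\delta_{x_i}}(x_i)$. Since $B_{\delta_{x_i}}(x_i)\subset\mathbb R^N\setminus\overline{\Omega_\Upsilon}$, on this ball we have $g(x,u_\lambda)=\tilde f(x,u_\lambda)$ whenever $u_\lambda\le a(x)$; but more robustly, on the whole ball $g(x,t)\le \max\{f(x,t),\nu|t|^{p(x)-1}\}$ and in any case $g$ obeys the subcritical bound $g(x,t)\le \epsilon|t|^{p(x)-1}+C_\epsilon|t|^{q(x)-1}$ coming from \eqref{f estimate} together with \eqref{til f estimate}. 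Fix a cutoff $\eta\in C_0^\infty(B_{\widetilde\delta}(x_i))$ with $\eta\equiv 1$ on $B_{\overline\delta}(x_i)$, $0\le\eta\le1$ and $|\nabla\eta|\le C/(\widetilde\delta-\overline\delta)$. The test function is $\varphi=\eta^{\,p^{x_i}_+}(u_\lambda-k)^+$, where $k\ge a_-/4$. Plugging this into the weak formulation, expanding $\nabla\varphi$, and using Young's inequality to absorb the cross term $p^{x_i}_+\eta^{\,p^{x_i}_+-1}(u_\lambda-k)^+|\nabla u_\lambda|^{p(x)-2}\nabla u_\lambda\cdot\nabla\eta$ into the principal part, one obtains a bound of the form
\[
   \int_{A_{k,\widetilde\delta,x_i}}\eta^{\,p^{x_i}_+}\,|\nabla u_\lambda|^{p(x)}
   \le C\!\!\int_{A_{k,\widetilde\delta,x_i}}\!\!\Big(|\nabla\eta|^{p(x)}(u_\lambda-k)^{p(x)}
   +\big(\lambda V+Z\big)|u_\lambda|^{p(x)-1}(u_\lambda-k)\,\eta^{\,p^{x_i}_+}+g(x,u_\lambda)(u_\lambda-k)\,\eta^{\,p^{x_i}_+}\Big).
\]

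Next I would control each term on the right on the set $A_{k,\widetilde\delta,x_i}$, where $u_\lambda>k$. The key observations are: (a) on the support ball $p(x)\in[p^{x_i}_-,p^{x_i}_+]$ and $q(x)\le q^{x_i}_+$, so all variable exponents can be replaced by these constant bounds at the cost of the constant $C$; (b) since $k\ge a_-/4>0$ and $u_\lambda>k$, powers like $(u_\lambda-k)^{p(x)}$ and $u_\lambda^{p(x)-1}(u_\lambda-k)$ are dominated by $(u_\lambda-k)^{(p^{x_i}_-)^*}$ plus a multiple of $k^{q_+}+1$ on the measure of the set — this is where \eqref{qplus x pminus}, i.e. $q^{x_i}_+\le(p^{x_i}_-)^*$, is used to ensure the highest power appearing is $(p^{x_i}_-)^*$; (c) the potential terms are bounded using $(H_3)$–$(H_4)$ together with the fact that $\lambda V(x)|u_\lambda|^{p(x)}$ has already been shown to be controlled — but actually here the cleanest route is to observe that since $u_\lambda\to0$ in $W^{1,p(x)}(\mathbb R^N\setminus\Omega_\Upsilon)$ we may assume $|u_\lambda|\le1$ a.e.\ on the ball for $\lambda$ large, killing the gap between $|u_\lambda|^{p(x)-1}(u_\lambda-k)$ and lower-order controllable quantities, and bounding $\lambda V+Z$ is replaced by noting the relevant integral $\int(\lambda V+Z)|u_\lambda|^{p(x)}$ is uniformly small. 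Finally, since on $A_{k,\overline\delta,x_i}$ we have $\eta\equiv1$ and $p(x)\ge p^{x_i}_-$ with $|\nabla u_\lambda|$ possibly less than $1$, one uses $|\nabla u_\lambda|^{p^{x_i}_-}\le|\nabla u_\lambda|^{p(x)}+1$ (splitting according to whether $|\nabla u_\lambda|\gtrless1$) to pass from the $p(x)$-power on the left to the constant power $p^{x_i}_-$, producing the extra $|A_{k,\widetilde\delta,x_i}|$ term, and collecting all contributions gives exactly the claimed inequality with $C=C(p_-,p_+,q_-,q_+,\nu,\delta_{x_i})$.

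I expect the main obstacle to be the bookkeeping of variable exponents when absorbing the gradient cross-term and when comparing different powers of $(u_\lambda-k)$ and of $u_\lambda$: unlike the constant-exponent case, Young's inequality and the elementary inequality $a^{s(x)}\le a^{s_-}+a^{s_+}$ must be applied pointwise with care so that the resulting constant depends only on the global bounds $p_-,p_+,q_-,q_+$ and the local radius, not on $k$ or $\lambda$. A secondary subtlety is making precise that on $B_{\delta_{x_i}}(x_i)$ the nonlinearity $g$ satisfies a clean bound independent of $\Upsilon$: this follows because the ball lies outside $\overline{\Omega'_\Upsilon}$ for the $\delta$ chosen in Lemma~\ref{boundary's cover} only near $\partial\Omega'_\Upsilon$, so one should instead just use the uniform subcritical estimate $g(x,t)\le\epsilon|t|^{p(x)-1}+C_\epsilon|t|^{q(x)-1}$ valid everywhere, together with $t>k\ge a_-/4$ to bound $|t|^{p(x)-1}\le C_{a_-}|t|^{q(x)-1}$ if one wishes a single power, or keep both terms and absorb the $|t|^{p(x)-1}(t-k)$ term as above. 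With these points handled, the remaining computation is routine.
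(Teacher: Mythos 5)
Your overall framework is right and matches the paper's: test $\big(A_\lambda\big)$ against $\xi^{\text{const}}(u_\lambda-k)^+$ with a cutoff $\xi$ between $B_{\overline\delta}$ and $B_{\widetilde\delta}$, absorb the gradient cross-term by Young's inequality, use $q^{x_i}_+\le(p^{x_i}_-)^*$ to collapse the variable exponents, and pass from $|\nabla u|^{p(x)}$ to $|\nabla u|^{p^{x_i}_-}$ at the cost of a $|A_{k,\widetilde\delta,x_i}|$ term. But there is a genuine sign error in the central estimate that your subsequent discussion does not repair. In the weak formulation the potential term
$\int(\lambda V+Z)u_\lambda^{p(x)-1}\varphi$ sits on the \emph{left} with a positive sign; since $u_\lambda>k>0$ on the set where $(u_\lambda-k)^+>0$, this term is \emph{nonnegative} and hence \emph{helps} you — it can be dropped, or better, used for cancellation. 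Your displayed inequality instead places it on the right with a $+$, treating it as something to be bounded from above. That is incorrect, and the fixes you then float are not viable: $(H_3)$ gives a lower bound $\lambda V+Z\ge M$, not an upper bound; $(H_4)$ controls $Z$ but not $\lambda V$; the assumption $u_\lambda\to0$ in $W^{1,p(x)}(\mathbb R^N\setminus\Omega_\Upsilon)$ is not a hypothesis of this lemma (only of Proposition \ref{P:boundedness of the solutions}); and inferring $|u_\lambda|\le1$ a.e.\ from $W^{1,p(x)}$-smallness is circular, since an $L^\infty$ bound is exactly what the whole section is after. Any such route would also make the constant depend on $\lambda$, contradicting the stated $C=C(p_-,p_+,q_-,q_+,\nu,\delta_{x_i})$.

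The mechanism the paper actually uses is the opposite of yours: keep the potential term, apply $(H_3)$ in the form $\lambda V(x)+Z(x)\ge\nu$ to get a \emph{negative} contribution $-\nu\int u_\lambda^{p(x)-1}\xi^{p_+}(u_\lambda-k)$ on the right, and then observe from \eqref{f estimate} (with $\epsilon=\nu$) and \eqref{til f estimate} that $g(x,u_\lambda)\le\nu u_\lambda^{p(x)-1}+C_\nu u_\lambda^{q(x)-1}$, so the $\nu u_\lambda^{p(x)-1}$ pieces cancel exactly. What remains is only $C_\nu\int u_\lambda^{q(x)-1}\xi^{p_+}(u_\lambda-k)$ together with the Young-absorbed cross term, which is precisely what feeds into the $(k^{q_+}+2)|A_{k,\widetilde\delta,x_i}|$ and $(\widetilde\delta-\overline\delta)^{-(p^{x_i}_-)^*}\int(u_\lambda-k)^{(p^{x_i}_-)^*}$ terms of the claim. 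Once you correct the sign and use this cancellation, the rest of your bookkeeping (Young's inequality, $q^{x_i}_+\le(p^{x_i}_-)^*$, and $|\nabla u_\lambda|^{p^{x_i}_-}\le|\nabla u_\lambda|^{p(x)}+1$) goes through as you sketched.
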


\begin{proof}
   We choose arbitrarily $ 0 < \overline{\delta} < \widetilde{\delta} <  \delta_{x_i} $ and $ \xi \in C^{ \infty } \big( \mathbb R^N \big) $ with
$$
   0 \le \xi \le 1,  \, \text{ supp } \xi \subset B_{ \widetilde{\delta} }(x_i), \, \xi = 1 \text{ in } B_{ \overline{\delta} }(x_i) \, \text{ and } \, \big| \nabla \xi \big| \le \frac{2}{\widetilde{\delta}-\overline{\delta}}.
$$
For $ k \ge \dfrac{a_-}{4} $, we define $ \eta = \xi^{ p_+ } ( u_\lambda-k )^+ $. We notice that
$$
   \nabla \eta = p_+ \xi^{ p_+-1 } (u_\lambda-k) \nabla \xi + \xi^{ p_+ } \nabla u_\lambda
$$
on the set $ \left\{ u_\lambda > k \right\} $. Then, writing $ u_\lambda = u $ and taking $ \eta $ as a test function, we obtain
\begin{multline*}
   p_+ \int_{ A_{k,\widetilde{\delta},x_i} } \xi^{ p_+-1 } (u-k) \big| \nabla u \big|^{ p(x)-2 } \nabla u \cdot \nabla \xi + \int_{ A_{k,\widetilde{\delta},x_i} } \xi^{ p_+ } \big| \nabla u \big|^{ p(x) } \\
	 + \int_{ A_{k,\widetilde{\delta},x_i} } \big( \lambda V(x) + Z(x) \big) u^{ p(x)-1 } \xi^{ p_+ } (u-k) =  \int_{ A_{k,\widetilde{\delta},x_i} } g(x,u) \xi^{ p_+ } (u-k).
\end{multline*}
If we set
$$
   J = \int_{ A_{k,\widetilde{\delta},x_i} } \xi^{ p_+ } \big| \nabla u \big|^{ p(x) },
$$
using that $ \nu \le \lambda V(x) + Z(x), \, \forall x \in \mathbb R^N  $, we get
\begin{multline} \label{J estimate}
   J \le p_+ \int_{ A_{k,\widetilde{\delta},x_i} } \xi^{ p_+-1 } (u-k) \big| \nabla u \big|^{ p(x)-1 } \big| \nabla \xi \big| \\
	- \int_{ A_{k,\widetilde{\delta},x_i} } \nu u^{ p(x)-1 } \xi^{ p_+ } (u-k)  + \int_{ A_{k,\widetilde{\delta},x_i} } g(x,u) \xi^{ p_+ } (u-k).
\end{multline}
From \eqref{J estimate}, \eqref{f estimate} and \eqref{til f estimate}, 
\begin{multline*}
   J \le p_+ \int_{ A_{k,\widetilde{\delta},x_i} } \xi^{ p_+-1 } (u-k) \big| \nabla u \big|^{ p(x)-1 } \big| \nabla \xi \big| - \int_{ A_{k,\widetilde{\delta},x_i} } \nu u^{ p(x)-1 } \xi^{ p_+ } (u-k) \\
	 + \int_{ A_{k,\widetilde{\delta},x_i} } \big( \nu u^{ p(x)-1 } + C_\nu u^{ q(x)-1 } \big) \xi^{ p_+ } (u-k),
\end{multline*}
from where it follows
$$
   J \le p_+ \int_{ A_{k,\widetilde{\delta},x_i} } \xi^{ p_+-1 } (u-k) \big| \nabla u \big|^{ p(x)-1 } \big| \nabla \xi \big| + C_\nu \int_{ A_{k,\widetilde{\delta},x_i} } u^{ q(x)-1 } (u-k).
$$
Using Young's inequality, we obtain, for $ \chi \in (0,1) $,
\begin{multline*}
   J \le \frac{p_+  (p_+-1)}{p_-} \chi^{ \frac{p_-}{p_+-1}} J + \frac{2^{ p_+ } p_+}{p_-} \chi^{ -p_+ } \int_{ A_{k,\widetilde{\delta},x_i} } \left( \frac{u-k}{\widetilde{\delta}-\overline{\delta}} \right)^{ p(x) } \\
	+ \frac{C_\nu (q_+-1)}{q_-} \int_{ A_{k,\widetilde{\delta},x_i} } u^{ q(x) } + \frac{C_\nu \left( 1 + \delta_{ x_i }^{ q_+ } \right)}{q_-} \int_{ A_{k,\widetilde{\delta},x_i} } \left( \frac{u-k}{\widetilde{\delta}-\overline{\delta}}\right)^{ q(x) }.
\end{multline*}
Writing
$$
   Q = \int_{ A_{k,\widetilde{\delta},x_i} } \left( \frac{u-k}{\widetilde{\delta}-\overline{\delta}} \right)^{ \left( p^{x_i}_- \right)^* },
$$
for $ \chi \approx 0^+ $ fixed, due to \eqref{qplus x pminus}, we deduce
\begin{multline*}
   J \le \frac{1}{2} J + \frac{2^{ p_+ } p_+}{p_-} \chi^{ -p_+ } \Big( \big| { A_{k,\widetilde{\delta},x_i} } \big| + Q \Big) + \frac{C_\nu 2^{ q_+ } (q_+-1) \left( 1 + \delta_{ x_i }^{ q_+ } \right)}{q_-} \Big( \big| { A_{k,\widetilde{\delta},x_i} } \big| + Q \Big) \\
	+ \frac{C_\nu 2^{ q_+ } (q_+-1) \left( 1+k^{ q_+ } \right)}{q_-} \big| { A_{k,\widetilde{\delta},x_i} } \big| + \frac{C_\nu \left( 1 + \delta_{ x_i }^{ q_+ } \right)}{q_-} \Big( \big| { A_{k,\widetilde{\delta},x_i} } \big| + Q \Big).
\end{multline*}
Therefore
$$
   \int_{ A_{k,\overline{\delta},x_i} } \big| \nabla u \big|^{ p(x) } \le J \le C \left[ \big( k^{ q_+ } + 1 \big) \big| A_{k,\widetilde{\delta},x_i} \big| + Q \right],
$$
for a positive constant $ C = C \big( p_-, p_+, q_-, q_+, \nu, \delta_{ x_i } \big) $ which does not depend on $ k $. Since
$$
   \big| \nabla u \big|^{ p^{x_i}_- } - 1 \le \big| \nabla u \big|^{ p(x) }, \, \forall x \in B_{\delta_{x_i}}(x_i),
$$
we obtain
\begin{align*}
    \int_{ A_{k,\overline{\delta},x_i} } \big| \nabla u \big|^{ p^{x_i}_- } & \le C \left[ \big( k^{ q_+ } + 1 \big) \big| A_{k,\widetilde{\delta},x_i} \big| + Q \right] + \big| A_{k,\widetilde{\delta},x_i} \big| \\
		& \le C \left( \big( k^{ q_+ } + 2 \big) \big| A_{k,\widetilde{\delta},x_i} \big| + \left(\widetilde{\delta}-\overline{\delta} \right)^{ -\big( p^{x_i}_- \big)^* } \int_{ A_{k,\widetilde{\delta},x_i} } \left( u-k \right)^{ \big( p^{x_i}_- \big)^* } \right),
\end{align*}
for a positive constant $ C = C \big( p_-, p_+, q_-, q_+, \nu, \delta_{ x_i } \big) $ which does not depend on $ k $.
\end{proof}

\vspace{.3cm}
The next lemma can be found at (\cite[Lemma 4.7]{LadyUral}).

\begin{lemma} \label{Ladyzhenskaya}
   Let $ (J_n) $ be a sequence of nonnegative numbers satisfying
$$
   J_{ n+1 } \le C B^n J_n^{ 1+\eta }, \, n=0,1,2,\ldots,
$$
where $ C, \eta > 0 $ and $ B > 1 $. If
$$
   J_0 \le C^{ - \frac{1}{\eta} } B^{ - \frac{1}{{\eta}^2} },
$$
then $ J_n \to 0 $, as $ n \to \infty $.
\end{lemma}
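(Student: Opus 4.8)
The plan is to prove, by induction on $n$, the stronger quantitative estimate
$$
J_n \le J_0 \, B^{-n/\eta}, \qquad n = 0, 1, 2, \ldots,
$$
and then simply let $n \to \infty$: since $B > 1$ and $\eta > 0$, the factor $B^{-n/\eta}$ tends to $0$, so this bound forces $J_n \to 0$. Thus the real content is establishing the displayed estimate.

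For the induction, the base case $n = 0$ is the trivial identity $J_0 \le J_0$. Assume the estimate holds for some $n \ge 0$. Substituting it into the recursion $J_{n+1} \le C B^n J_n^{1+\eta}$ and using that $t \mapsto t^{1+\eta}$ is nondecreasing on $[0,\infty)$, I would write
$$
J_{n+1} \le C B^n \big( J_0 B^{-n/\eta} \big)^{1+\eta} = C J_0^{1+\eta} \, B^{\, n - n(1+\eta)/\eta}.
$$
A one-line computation gives $n - \dfrac{n(1+\eta)}{\eta} = -\dfrac{n}{\eta}$, hence
$$
J_{n+1} \le \big( C J_0^{\eta} \big) \cdot J_0 \, B^{-n/\eta}.
$$
It then suffices to verify $C J_0^{\eta} \le B^{-1/\eta}$, which is precisely the hypothesis $J_0 \le C^{-1/\eta} B^{-1/\eta^2}$ raised to the power $\eta$. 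This yields $J_{n+1} \le J_0 B^{-(n+1)/\eta}$ and closes the induction.

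There is no genuine obstacle here: the argument reduces to the single algebraic identity $n - n(1+\eta)/\eta = -n/\eta$ together with the observation that the smallness imposed on $J_0$ is exactly what makes the geometric-type decay rate $B^{-n/\eta}$ self-propagating through the superlinear recursion. The only step requiring a moment's thought is guessing the correct ansatz $J_n \lesssim J_0 B^{-n/\eta}$ for the decay before running the induction; once that is in place everything is routine.
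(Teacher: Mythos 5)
Your proof is correct. The paper itself gives no argument for this lemma; it simply cites \cite[Lemma 4.7]{LadyUral}, so there is nothing internal to compare against, but your induction is exactly the standard proof of that classical result. The key steps all check out: the exponent computation $n-\frac{n(1+\eta)}{\eta}=-\frac{n}{\eta}$ is right, and raising the smallness hypothesis $J_0\le C^{-1/\eta}B^{-1/\eta^2}$ to the power $\eta$ gives $C J_0^{\eta}\le B^{-1/\eta}$, which is precisely what closes the induction $J_{n+1}\le J_0 B^{-(n+1)/\eta}$; since $B>1$ and $\eta>0$, the bound forces $J_n\to 0$. Your write-up is thus a complete, self-contained substitute for the citation.
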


\begin{lemma} \label{boundedness of the solutions}
    Let $ \big( u_\lambda \big) $ be a family of solutions for $ \big( A_\lambda \big) $ such that $ u_\lambda \to 0 $ in $ W^{ 1,p(x) } \big( \mathbb R^N \setminus \Omega_\Upsilon \big) $, as $ \lambda \to \infty $. Then, there exists $ \lambda^* > 0 $ with the following property:
$$
   \left| u_\lambda \right|_{ \infty, {\cal N} \left( \partial \Omega'_\Upsilon \right)  } \le a_-, \, \forall \lambda \ge \lambda^*.
$$
\end{lemma}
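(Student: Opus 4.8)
The statement is exactly the endpoint of a De Giorgi--Stampacchia truncation argument, carried out ball by ball on the finite cover $\mathcal N(\partial\Omega'_\Upsilon)=\bigcup_{i=1}^{l}B_{\delta_{x_i}/2}(x_i)$ of Lemma~\ref{boundary's cover}. I would fix one index $i$, freeze the exponent to the constant $p^{x_i}_-$ on $B_{\delta_{x_i}}(x_i)$, and run an iteration on level sets. Concretely, set $p_i:=p^{x_i}_-$, $p_i^{*}:=(p^{x_i}_-)^{*}=\tfrac{Np_i}{N-p_i}$, choose the levels $k_n=a_-(1-2^{-n-1})$ (so $k_0=a_-/2\ge a_-/4$, which is what Lemma~\ref{good estimate} requires, and $k_n\uparrow a_-$) and radii $\rho_n\downarrow\delta_{x_i}/2$ with $\rho_0<\delta_{x_i}$, and define
\[
   J_n=\int_{A_{k_n,\rho_n,x_i}}\big(u_\lambda-k_n\big)^{p_i^{*}}.
\]
The goal is the recursion $J_{n+1}\le C\,B^{\,n}J_n^{1+\eta}$ of Lemma~\ref{Ladyzhenskaya} with $\eta=p_i/(N-p_i)>0$, $B>1$, together with smallness of $J_0$ forced by the hypothesis $u_\lambda\to0$ in $W^{1,p(x)}(\mathbb R^N\setminus\Omega_\Upsilon)$.

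\textbf{Deriving the recursion.} Pick an intermediate radius $\bar\rho_n=\tfrac12(\rho_n+\rho_{n+1})$ and a cutoff $\psi_n$ with $\psi_n\equiv1$ on $B_{\rho_{n+1}}(x_i)$, $\operatorname{supp}\psi_n\subset B_{\bar\rho_n}(x_i)$, $|\nabla\psi_n|\le C/(\bar\rho_n-\rho_{n+1})\le C2^{\,n}/\delta_{x_i}$. Applying the classical Sobolev embedding $W^{1,p_i}\hookrightarrow L^{p_i^{*}}$ to $\psi_n(u_\lambda-k_{n+1})^+$ gives
\[
   J_{n+1}^{\,p_i/p_i^{*}}\le C\Big(\textstyle\int_{A_{k_{n+1},\bar\rho_n,x_i}}|\nabla u_\lambda|^{p_i}
   +\tfrac{C2^{\,np_i}}{\delta_{x_i}^{p_i}}\int_{A_{k_{n+1},\bar\rho_n,x_i}}(u_\lambda-k_{n+1})^{p_i}\Big).
\]
For the gradient term I would invoke Lemma~\ref{good estimate} with $k=k_{n+1}$, $\overline\delta=\bar\rho_n$, $\widetilde\delta=\rho_n$; here the crucial point is that $k_{n+1}\le a_-$, so $k_{n+1}^{q_+}+2\le a_-^{q_+}+2$ is a uniform constant, and $(\rho_n-\bar\rho_n)^{-p_i^{*}}\le C2^{\,np_i^{*}}$. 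The measure $|A_{k_{n+1},\rho_n,x_i}|$ is bounded by $C\,2^{\,np_i^{*}}J_n$, using $u_\lambda-k_n>k_{n+1}-k_n=a_-2^{-n-2}$ on $A_{k_{n+1},\rho_n,x_i}\subset A_{k_n,\rho_n,x_i}$; the lower-order term $\int(u_\lambda-k_{n+1})^{p_i}$ is controlled by $J_n^{p_i/p_i^{*}}\,|A_{k_{n+1},\rho_n,x_i}|^{p_i/N}$ via H\"older with exponents $p_i^{*}/p_i$ and $N/p_i$. The exponent identity $\tfrac{p_i}{p_i^{*}}+\tfrac{p_i}{N}=1$ then makes every term collapse to $C2^{\,nC'}J_n$, whence $J_{n+1}^{\,p_i/p_i^{*}}\le C2^{\,nC'}J_n$, i.e. $J_{n+1}\le C B^{\,n}J_n^{1+\eta}$ with $B=2^{\,C'p_i^{*}/p_i}>1$ and $\eta=p_i^{*}/p_i-1>0$. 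No a priori bound on $J_n$ is needed.

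\textbf{Smallness of $J_0$ and conclusion.} By the construction in Lemma~\ref{boundary's cover}, $\overline{B_{\delta_{x_i}}(x_i)}\subset\mathbb R^N\setminus\overline{\Omega_\Upsilon}$, so $u_\lambda|_{B_{\delta_{x_i}}(x_i)}\to0$ in $W^{1,p(x)}\big(B_{\delta_{x_i}}(x_i)\big)$ as $\lambda\to\infty$. On this fixed bounded set $1\le p_i\le p(x)$, hence the continuous embeddings $W^{1,p(x)}\big(B_{\delta_{x_i}}(x_i)\big)\hookrightarrow W^{1,p_i}\big(B_{\delta_{x_i}}(x_i)\big)\hookrightarrow L^{p_i^{*}}\big(B_{\delta_{x_i}}(x_i)\big)$ yield $J_0\le\int_{B_{\delta_{x_i}}(x_i)}u_\lambda^{\,p_i^{*}}\to0$. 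Thus there is $\lambda_i^{*}$ such that $J_0\le C^{-1/\eta}B^{-1/\eta^{2}}$ for $\lambda\ge\lambda_i^{*}$; Lemma~\ref{Ladyzhenskaya} gives $J_n\to0$. Since the integrand of $J_n$ decreases pointwise to $\big((u_\lambda-a_-)^+\big)^{p_i^{*}}\mathbf 1_{B_{\delta_{x_i}/2}(x_i)}$ (as $k_n\uparrow a_-$, $\rho_n\downarrow\delta_{x_i}/2$), monotone convergence forces $\int_{B_{\delta_{x_i}/2}(x_i)}\big((u_\lambda-a_-)^+\big)^{p_i^{*}}=0$, i.e. $u_\lambda\le a_-$ a.e. on $B_{\delta_{x_i}/2}(x_i)$. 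Taking $\lambda^{*}=\max\{\lambda_1^{*},\dots,\lambda_l^{*}\}$ and using $u_\lambda\ge0$ and $\mathcal N(\partial\Omega'_\Upsilon)=\bigcup_i B_{\delta_{x_i}/2}(x_i)$ gives $|u_\lambda|_{\infty,\mathcal N(\partial\Omega'_\Upsilon)}\le a_-$ for all $\lambda\ge\lambda^{*}$.

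\textbf{Main obstacle.} The delicate point is the variable-exponent bookkeeping in the iteration: one must freeze to the constant exponent $p_i$ on each ball in order to use a clean Sobolev inequality, which is precisely why $q^{x_i}_+\le(p^{x_i}_-)^{*}$ from Lemma~\ref{boundary's cover} is needed so that Lemma~\ref{good estimate} produces the $(p^{x_i}_-)^{*}$-power on the right-hand side, and one must track all factors of $2^{\,n}$ and the power of $J_n$ carefully to confirm that the self-improving exponent is genuinely $1+\eta>1$; keeping $k_n$ bounded by $a_-$ (so $k_n^{q_+}+2$ is a uniform constant) is what makes this possible.
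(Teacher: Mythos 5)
Your proposal is correct and follows essentially the same De Giorgi iteration that the paper uses: fix a ball from Lemma \ref{boundary's cover}, truncate at levels $k_n$ and shrinking radii, combine the constant-exponent Sobolev inequality on $\psi_n(u_\lambda-k_{n+1})^+$ with the Caccioppoli-type estimate of Lemma \ref{good estimate}, obtain the geometric recursion of Lemma \ref{Ladyzhenskaya}, and force $J_0$ small from $u_\lambda\to0$ in $W^{1,p(x)}(\mathbb R^N\setminus\Omega_\Upsilon)$. The only bookkeeping differences are cosmetic: the paper takes $k_n\uparrow a_-/2$ (giving the strict bound $u_\lambda\le a_-/2<a_-$) rather than your $k_n\uparrow a_-$, and it disposes of the lower-order term $\int(u-k_{n+1})^{p_i}$ with Young's inequality instead of your H\"older split with exponents $p_i^{*}/p_i$ and $N/p_i$; both routes yield the same recursion and constants.
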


\begin{proof}
   It is enough to prove the inequality in each ball $ B_{\frac{\delta_{x_i}}{2}} (x_i), \, i = 1, \ldots, l $, given by Lemma \ref{boundary's cover}. We set
$$
   \widetilde{\delta}_n = \frac{\delta_{x_i}}{2} + \frac{\delta_{x_i}}{2^{n+1}}, \ \overline{\delta}_n = \frac{\widetilde{\delta}_n + \widetilde{\delta}_{n+1}}{2}, \ k_n = \frac{a_-}{2} \left( 1-\frac{1}{2^{n+1}} \right), \, \forall n = 0, 1, 2, \ldots .
$$
Then
$$
   \widetilde{\delta}_n \downarrow \frac{\delta_{x_i}}{2}, \quad \widetilde{\delta}_{ n+1 } < \overline{\delta}_n < \widetilde{\delta}_n, \quad k_n \uparrow \frac{a_-}{2}.
$$
From now on, we fix 
$$
   J_n(\lambda) = \int_{ A_{k_n,\widetilde{\delta}_n,x_i} } \big( u_\lambda(x) - k_n \big)^{ \left( p^{x_i}_- \right)^* }, \, n = 0, 1, 2, \ldots.
$$
and $ \xi \in C^1 \big( \mathbb R \big) $ such that
$$
   0 \le \xi \le 1, \ \xi(t) = 1, \text{ for } t \le \frac{1}{2}, \text{ and } \xi(t) = 0, \text{ for } t \ge \frac{3}{4}.
$$
Setting
$$
   \xi_n(x) = \xi \Bigg( \frac{2^{ n+1 }}{\delta_{x_i}} \bigg( \big| x-x_i \big|-\frac{\delta_{x_i}}{2} \bigg) \Bigg), \, x \in \mathbb R^N, \, n = 0, 1, 2, \ldots,
$$
we have $ \xi_n = 1 $ in $ B_{ \widetilde{\delta}_{n+1} }(x_i) $ and $ \xi_n = 0 $ outside $ B_{ \overline{\delta}_n }(x_i) $. Writing $ u_\lambda = u $, we get
\begin{align*}
   J_{ n+1 } & \le \int_{ A_{k_{n+1},\overline{\delta}_n,x_i} } \big( (u(x) - k_{ n+1 } ) \xi_n(x) \big)^{ \left( p^{x_i}_- \right)^* } \\
   & = \int_{ B_{ \delta_{ x_i } }(x_i) } \big( ( u-k_{ n+1 } )^+(x) \xi_n(x) \big)^{ \left( p^{x_i}_- \right)^* }  \\
	 & \le C \big( N, p^{x_i}_- \big) \left( \int_{ B_{ \delta_{ x_i } }(x_i) } \big| \nabla \big( ( u-k_{ n+1 } )^+ \xi_n \big)(x) \big|^{ p^{x_i}_- } \right)^{ \frac{\left( p^{x_i}_- \right)^*}{ p^{x_i}_-} } \\
   & \le C \big( N, p^{x_i}_- \big)  \left( \int_{ A_{k_{n+1},\overline{\delta}_n,x_i} } \big| \nabla u \big|^{ p^{x_i}_- } + \int_{ A_{k_{n+1},\overline{\delta}_n,x_i} } ( u-k_{ n+1 } )^{ p^{x_i}_- } \big| \nabla \xi_n \big|^{ p^{x_i}_- } \right)^{ \frac{\left( p^{x_i}_- \right)^*}{ p^{x_i}_-} }.
\end{align*}
Since
$$
   \big| \nabla \xi_n(x) \big| \le C \big( \delta_{ x_i } \big) 2^{ n+1 }, \, \forall x \in \mathbb R^N,
$$
writing $ J_{ n+1 }^{ \frac{p^{x_i}_-}{\left( p^{x_i}_- \right)^*} } = \widetilde{J}_{ n+1 } $, we obtain
$$
	\widetilde{J}_{ n+1 } \le C \Big( N, p^{x_i}_-, \delta_{ x_i } \Big) \left( \int_{ A_{k_{n+1},\overline{\delta}_n,x_i} } \big| \nabla u \big|^{ p^{x_i}_- } + 2^{ n p^{x_i}_- } \int_{ A_{k_{n+1},\overline{\delta}_n,x_i} } ( u-k_{ n+1 } )^{ p^{x_i}_- } \right).
$$
Using Lemma \ref{good estimate}, 
\begin{align*}
   \widetilde{J}_{ n+1 } & \le C \Big( N, p^{x_i}_-, \delta_{ x_i } \Big) \bigg( \left( k_{ n+1 }^{ q_+ }+2 \right) \big| A_{ k_{ n+1 },\widetilde{\delta}_n,x_i } \big| \\
	 & \phantom{ \le } + \left( \frac{2^{ n+3 }}{\delta_{ x_i }} \right)^{ \left( p^{x_i}_- \right)^* } \int_{ A_{ k_{ n+1 },\widetilde{\delta}_n,x_i } } ( u-k_{ n+1 } )^{ \left( p^{x_i}_- \right)^* } + 2^{ n p^{x_i}_- } \int_{ A_{ k_{ n+1 },\widetilde{\delta}_n,x_i } } ( u-k_{ n+1 } )^{ p^{x_i}_- } \bigg) \\
	 & \le C \Big( N, p^{x_i}_-, \delta_{ x_i } \Big) \bigg( \left( k_{ n+1 }^{ q_+ }+2 \right) \big| A_{ k_{ n+1 },\widetilde{\delta}_n,x_i } \big| \\
	 & \phantom{ \le } +  2^{ n \left( p^{x_i}_- \right)^* } \int_{ A_{ k_{ n+1 },\widetilde{\delta}_n,x_i } } ( u-k_{ n+1 } )^{ \left( p^{x_i}_- \right)^* } + 2^{ n p^{x_i}_- } \int_{ A_{ k_{ n+1 },\widetilde{\delta}_n,x_i } } ( u-k_{ n+1 } )^{ p^{x_i}_- } \bigg).
\end{align*}
From Young's inequality 
$$
   \int_{ A_{ k_{ n+1 },\widetilde{\delta}_n,x_i } } ( u-k_{ n+1 } )^{ p^{x_i}_- } \le C \Big( p^{x_i}_- \Big) \left( \big| A_{ k_{ n+1 },\widetilde{\delta}_n,x_i } \big| + \int_{ A_{ k_{ n+1 },\widetilde{\delta}_n,x_i } } ( u-k_{ n+1 } )^{ \left( p^{x_i}_- \right)^* } \right).
$$
Thus
$$
   \widetilde{J}_{ n+1 } \le C \Big( N, p^{x_i}_-, \delta_{ x_i } \Big) \Bigg( \bigg( \left( \frac{a_-}{2} \right)^{ q_+ }+2+2^{ n p^{x_i}_- } \bigg) \big| A_{ k_{ n+1 },\widetilde{\delta}_n,x_i } \big| + 2^{ n \left( p^{x_i}_- \right)^* } J_n + 2^{ n p^{ x_i }_- } J_n \Bigg).
$$
Now, since
$$
   J_n \ge \int_{ A_{ k_{ n+1 },\widetilde{\delta}_n,x_i } } ( u-k_n)^{ \left( p^{x_i}_- \right)^* } \ge ( k_{ n+1 }-k_n )^{ \left( p^{x_i}_- \right)^* } \big| A_{ k_{ n+1 },\widetilde{\delta}_n,x_i } \big|
$$
it follows that
$$
   \big| A_{ k_{ n+1 },\widetilde{\delta}_n,x_i } \big| \le \left( \frac{2^{ n+3 }}{a_-} \right)^{ \left( p^{x_i}_- \right)^* } J_n,
$$
and so,
\begin{align*}
   \widetilde{J}_{ n+1 } & \le C \Big( N, p^{x_i}_-, \delta_{ x_i }, a_-, q_+ \Big) \left( 2^{ n \left( p^{x_i}_- \right)^* } J_n + 2^{ n \big( p^{ x_i }_- + \left(  p^{x_i}_- \right)^*  \big) } J_n + 2^{ n \left( p^{x_i}_- \right)^* } J_n + 2^{ n p^{ x_i }_- } J_n \right).
\end{align*}
Fixing $ \alpha = \big( p^{ x_i }_- + \left(  p^{x_i}_- \right)^*  \big) $, it follows that 
$$
   J_{ n+1 } \le C \Big( N, p^{x_i}_-, \delta_{ x_i }, a_-, q_+ \Big) \left( 2^{ \alpha \frac{\left( p^{x_i}_- \right)^*}{p^{ x_i }_-} } \right)^n { J_n }^{ \frac{\left( p^{x_i}_- \right)^*}{p^{ x_i }_-} },
$$
and consequently
$$
   J_{ n+1 } \le C B^n J_n^{ 1+\eta },
$$
where $ C = C \Big( N, p^{x_i}_-, \delta_{ x_i }, a_-, q_+ \Big) $, $ B = 2^{ \alpha \frac{\left( p^{x_i}_- \right)^*}{p^{ x_i }_-} } $ and $ \eta = \frac{\left( p^{x_i}_- \right)^*}{p^{ x_i }_-} -1 $. Now, once that $ u_\lambda \to 0 $ in $ W^{ 1,p(x) } \big( \mathbb R^N \setminus \Omega_\Upsilon \big) $, as $ \lambda \to \infty $,  there exists $ \lambda_i > 0 $ such that
$$
   \int_{ A_{ \frac{a_-}{4}, \delta_{ x_i }, x_i } } \left( u_\lambda-\frac{a_-}{4} \right)^{ \left( p^{x_i}_- \right)^* } = J_0(\lambda) \le C^{ - \frac{1}{\eta} } B^{ - \frac{1}{{\eta}^2} }, \quad \lambda \geq \lambda_i. 
$$
From Lemma \ref{Ladyzhenskaya}, $ J_n(\lambda) \to 0 $, $ n \to \infty $, for all $ \lambda \geq \lambda_i$, and so, 
$$
    u_\lambda \le \frac{a_-}{2} < a_-, \text{ in } B_{\frac{\delta_{x_i}}{2}}, \text{ for all }  \lambda \ge \lambda_i.
$$
Now, taking $ \lambda^* = \max \{ \lambda_1, \ldots, \lambda_l \} $, we conclude that
$$
    \left| u_\lambda \right|_{ \infty, {\cal N} \left( \partial \Omega'_\Upsilon \right) } < a_-, \, \forall \lambda \ge \lambda^*.
$$

\end{proof}

\vspace{.3cm}
\begin{proof} [Proof of Proposition \ref{P:boundedness of the solutions}]
  Fix $ \lambda \ge \lambda^* $, where $ \lambda^* $ is given at Lemma \ref{boundedness of the solutions}, and define $ \widetilde{u}_\lambda \colon \mathbb R^N \setminus \Omega'_\Upsilon \to \mathbb R $ given by
$$
   \widetilde{u}_\lambda(x) = \left( u_\lambda-a_- \right)^+ (x).
$$
From Lemma \ref{boundedness of the solutions},  $ \widetilde{u}_\lambda \in W^{ 1,p(x) }_0 \big( \mathbb R^N \setminus \Omega'_\Upsilon \big) $. Our goal is showing that  $\widetilde{u}_\lambda = 0 $ in $ \mathbb R^N \setminus \Omega'_\Upsilon $. This implies
$$
   \left| u_\lambda \right|_{ \infty, \mathbb R^N \setminus \Omega'_\Upsilon } \le a_-.
$$
In fact, extending $ \widetilde{u}_\lambda = 0 $ in $ \Omega'_\Upsilon $ and taking $ \widetilde{u}_\lambda $ as a test function, we obtain
$$
   \int_{ \mathbb R^N \setminus \Omega'_\Upsilon } \! \! \big| \nabla u_\lambda \big|^{ p(x)-2 } \nabla u_\lambda \cdot \nabla \widetilde{u}_\lambda + \int_{ \mathbb R^N \setminus \Omega'_\Upsilon } \! \! \! \! \big( \lambda V(x) + Z(x) \big) u_{\lambda}^{ p(x)-2 } u_\lambda \widetilde{u}_\lambda = \! \! \int_{ \mathbb R^N \setminus \Omega'_\Upsilon } g \left( x, u_\lambda \right) \widetilde{u}_\lambda.
$$
Since
\begin{gather*}
   \int_{ \mathbb R^N \setminus \Omega'_\Upsilon } \big| \nabla u_\lambda \big|^{ p(x)-2 } \nabla u_\lambda \cdot \nabla \widetilde{u}_\lambda = \int_{ \mathbb R^N \setminus \Omega'_\Upsilon } \big| \nabla \widetilde{u}_\lambda \big|^{ p(x) }, \\
	\int_{ \mathbb R^N \setminus \Omega'_\Upsilon } \! \! \! \!\big( \lambda V(x) + Z(x) \big) u_{\lambda}^{ p(x)-2 } u_\lambda \widetilde{u}_\lambda = \int_{ \left( \mathbb R^N \setminus \Omega'_\Upsilon \right)_+ } \! \! \! \!	\big( \lambda V(x) + Z(x) \big) u_{\lambda}^{ p(x)-2 } \left( \widetilde{u}_\lambda+a_- \right) \widetilde{u}_\lambda
\end{gather*}
and
$$
  \int_{ \mathbb R^N \setminus \Omega'_\Upsilon } g \left( x, u_\lambda \right) \widetilde{u}_\lambda = \int_{ \left( \mathbb R^N \setminus \Omega'_\Upsilon \right)_+ } \frac{g \left( x, u_\lambda \right)}{u_\lambda} \left( \widetilde{u}_\lambda+a_- \right) \widetilde{u}_\lambda,
$$
where
$$
   \left( \mathbb R^N \setminus \Omega'_\Upsilon \right)_+ = \left\{ x \in \mathbb R^N \setminus \Omega'_\Upsilon \, ; \, u_\lambda(x) > 0 \right\},
$$
we derive
$$
   \int_{ \mathbb R^N \setminus \Omega'_\Upsilon } \! \! \big| \nabla \widetilde{u}_\lambda \big|^{ p(x) } + \int_{ \left( \mathbb R^N \setminus \Omega'_\Upsilon \right)_+ } \! \! \! \! \left( \big( \lambda V(x) + Z(x) \big) u_{\lambda}^{ p(x)-2 } -\frac{g \left( x, u_\lambda \right)}{u_\lambda} \right) \left( \widetilde{u}_\lambda+a_- \right)  \widetilde{u}_\lambda = 0,
$$
Now, by \eqref{til f estimate}, 
$$
   \big( \lambda V(x) + Z(x) \big) u_{\lambda}^{ p(x)-2 } - \frac{g \left( x, u_\lambda \right)}{u_\lambda} > \nu u_{\lambda}^{ p(x)-2 } - \frac{\tilde{f} \left( x, u_\lambda \right)}{u_\lambda} \ge 0 \quad \mbox{in} \quad  \left( \mathbb R^N \setminus \Omega'_\Upsilon \right)_+ .
$$
This form, $ \widetilde{u}_\lambda = 0 $ in $ \left( \mathbb R^N \setminus \Omega'_\Upsilon \right)_+ $. Obviously, $ \widetilde{u}_\lambda = 0  $ at the points where $ u_\lambda = 0 $, consequently, $ \widetilde{u}_\lambda = 0 $ in $ \mathbb R^N \setminus \Omega'_\Upsilon $. 		
\end{proof}


\section{A special critical value for $ \phi_\lambda $}


For each $ j = 1, \ldots, k $, consider
$$
   I_j(u) = \int_{ \Omega_j } \frac{1}{p(x)} \left( \big| \nabla u \big|^{ p(x) } + Z(x) | u |^{ p(x) } \right) - \int_{ \Omega_j } F(x,u), \ u \in W^{ 1,p(x) }_0 \big( \Omega_j \big),
$$
the energy functional associated to $ (P_j) $, and
$$
   \phi_{ \lambda,j }(u) = \int_{ \Omega'_j } \frac{1}{p(x)} \left( \big| \nabla u \big|^{ p(x) } + \big( \lambda V(x) + Z(x) \big) | u |^{ p(x) } \right) - \int_{ \Omega'_j } F(x,u), \ u \in W^{ 1,p(x) } \big( \Omega'_j \big),
$$
the energy functional associated to
$$
   \begin{cases}
	    - \Delta_{ p(x) } u + \big( \lambda V(x) + Z(x) \big) | u |^{ p(x)-2 } u  = f(x,u), \text{ in } \Omega'_j, \\
		  \phantom{ - \Delta_{ p(x) } u + \big( \lambda V(x) + Z(x) \big) | u |^{ p(x)-} } \frac{\partial u}{\partial \eta} = 0, \text{ on } \partial \Omega'_j.
	 \end{cases}
$$
It is fulfilled that $ I_j $ and $ \phi_{ \lambda,j } $ satisfy the mountain pass geometry and let
$$
   c_j = \inf_{ \gamma \in \Gamma_j } \max_{ t \in [0,1] } I_j \big( \gamma(t) \big) \, \text{ and } \, c_{ \lambda,j } = \inf_{ \gamma \in \Gamma_{ \lambda,j } } \max_{ t \in [0,1] } \phi_{ \lambda,j } \big( \gamma(t) \big),
$$
their respective mountain pass levels, where
$$
  \Gamma_j = \left\{ \gamma \in C \Big( [0,1], W^{ 1,p(x) }_0 \big( \Omega_j \big) \Big) \, ; \, \gamma(0) = 0 \text{ and } I_j \big( \gamma(1) \big) < 0 \right\}
$$
and
$$
	 \Gamma_{ \lambda,j } = \left\{ \gamma \in C \Big( [0,1], W^{ 1,p(x) } \big( \Omega'_j \big) \Big) \, ; \, \gamma(0) = 0 \text{ and } \phi_{ \lambda,j } \big( \gamma(1) \big) < 0 \right\}.
$$
Invoking the $ (PS) $ condition on $ I_j $ and $ \phi_{ \lambda,j } $, we ensure that there exist $ w_j \in W^{ 1,p(x) }_0 \big( \Omega_j \big) $ and $ w_{ \lambda,j } \in W^{ 1,p(x) } \big( \Omega'_j \big) $ such that
$$
   I_j \big( w_j \big) = c_j \, \text{ and } \, I'_j \big( w_j \big) = 0 
$$
and
$$
	\phi_{ \lambda,j } \big( w_{ \lambda,j } \big) = c_{ \lambda,j }	\, \text{ and } \, \phi'_{ \lambda,j } \big( w_{ \lambda,j } \big) = 0.
$$

\vspace{.3cm}
\begin{lemma}
   There holds that
\begin{enumerate}
   \item[(i)] $ 0 < c_{ \lambda,j } \le c_j, \, \forall \lambda \ge 1, \, \forall j \in \left\{ 1, \ldots, k \right\} $;
   \item[(ii)] $ c_{ \lambda,j } \to c_j, \text{ as } \lambda \to \infty, \, \forall j \in \left\{ 1, \ldots, k \right\} $.
\end{enumerate}
\end{lemma}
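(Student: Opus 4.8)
The plan is to establish (i) first, since it is essentially a comparison of two minimax problems over nested function spaces, and then to leverage (i) together with the compactness already available from the $(PS)$ condition to obtain (ii).

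For part (i), the positivity $c_{\lambda,j}>0$ is immediate from the mountain pass geometry established for $\phi_{\lambda,j}$: there is a small sphere $\|u\|=\rho$ in $W^{1,p(x)}(\Omega_j')$ on which $\phi_{\lambda,j}\ge\alpha>0$, and every path in $\Gamma_{\lambda,j}$ must cross it. For the inequality $c_{\lambda,j}\le c_j$, the key observation is that $V\equiv 0$ on $\overline{\Omega_j}$, so for $u\in W^{1,p(x)}_0(\Omega_j)$, extended by zero to $\Omega_j'$, we have $\phi_{\lambda,j}(u)=I_j(u)$ for every $\lambda\ge 1$. Thus the zero-extension operator maps $\Gamma_j$ into $\Gamma_{\lambda,j}$ and preserves the value of the functional along paths, whence $\max_{t}\phi_{\lambda,j}(\gamma(t))=\max_t I_j(\gamma(t))$ for $\gamma\in\Gamma_j$; taking infimum over $\Gamma_j$ on the right and noting the infimum over $\Gamma_{\lambda,j}$ is over a larger class gives $c_{\lambda,j}\le c_j$. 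One must check that the zero-extension of a path in $\Gamma_j$ is indeed a continuous path into $W^{1,p(x)}(\Omega_j')$ with the required endpoint conditions, which is routine.

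For part (ii), since $\lambda\mapsto c_{\lambda,j}$ is nondecreasing (the functional $\phi_{\lambda,j}$ increases pointwise in $\lambda$ on functions supported where $V>0$, so the admissible class and the max values only grow) and bounded above by $c_j$ by (i), the limit $c_\infty:=\lim_{\lambda\to\infty}c_{\lambda,j}$ exists and satisfies $c_\infty\le c_j$. It remains to show $c_\infty\ge c_j$. The natural approach is to use the mountain pass critical points $w_{\lambda,j}$ with $\phi_{\lambda,j}(w_{\lambda,j})=c_{\lambda,j}$ and $\phi'_{\lambda,j}(w_{\lambda,j})=0$. From the uniform bound $c_{\lambda,j}\le c_j$ and the $(f_3)$-type argument used in Proposition~\ref{boundedness}, the family $(w_{\lambda,j})$ is bounded in $W^{1,p(x)}(\Omega_j')$, so along a sequence $\lambda_n\to\infty$ we may assume $w_{\lambda_n,j}\rightharpoonup w$ weakly and a.e. As in Proposition~\ref{(PS) infty condition}, the term $\int_{\Omega_j'}\lambda_n V(x)|w_{\lambda_n,j}|^{p(x)}$ is controlled by $p_+\,\phi_{\lambda_n,j}(w_{\lambda_n,j})\le p_+ c_j$, forcing $w=0$ on $\Omega_j'\setminus\overline{\Omega_j}$, so $w\in W^{1,p(x)}_0(\Omega_j)$; passing to the limit in $\phi'_{\lambda_n,j}(w_{\lambda_n,j})\varphi=0$ shows $w$ solves $(P_j)$, i.e. $I'_j(w)=0$. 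One then argues $w\neq 0$ (using $(f_2)$, $(f_4)$ and the mountain pass level bound to rule out $w_{\lambda_n,j}\to 0$, since otherwise $c_{\lambda_n,j}\to 0$, contradicting $c_{\lambda_n,j}\ge c_{1,j}>0$), and a strong-convergence argument of the type in Proposition~\ref{(PS) infty condition}(i) gives $w_{\lambda_n,j}\to w$ strongly, hence $c_{\lambda_n,j}=\phi_{\lambda_n,j}(w_{\lambda_n,j})\to I_j(w)\ge c_j$, the last inequality because $w$ is a nontrivial critical point of $I_j$ and $c_j$ is the least such energy (by $(f_4)$, the mountain pass level equals the least energy level). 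Combining, $c_\infty=c_j$.

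The main obstacle is the lower bound $c_\infty\ge c_j$ in (ii): it requires simultaneously controlling the penalization term $\lambda_n V|w_{\lambda_n,j}|^{p(x)}$ to localize the weak limit inside $\Omega_j$, upgrading weak to strong convergence (which needs the $(S_+)$-type monotonicity argument for the $p(x)$-Laplacian, as invoked at the end of Proposition~\ref{(PS) condition}), and ruling out vanishing of $w$. Once strong convergence is in hand the energy passes to the limit and the characterization of $c_j$ as the least energy level closes the argument.
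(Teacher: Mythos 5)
Your proof is correct and takes essentially the same route as the paper: for (i) both use the inclusion $\Gamma_j \subset \Gamma_{\lambda,j}$ obtained via zero-extension together with $\phi_{\lambda,j}=I_j$ on $W^{1,p(x)}_0(\Omega_j)$, and for (ii) both pass to the limit along the mountain pass solutions $w_{\lambda,j}$ using a $(PS)_\infty$-type compactness argument and the least energy characterization of $c_j$ provided by $(f_4)$. The only variation is that you establish existence of the limit via monotonicity of $\lambda \mapsto c_{\lambda,j}$ whereas the paper uses the standard sub-subsequence argument; note, though, that the admissible class $\Gamma_{\lambda,j}$ actually shrinks (not grows) as $\lambda$ increases, which is what makes the infimum nondecreasing, so your parenthetical has the direction flipped even though the conclusion is right.
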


\begin{proof}
\begin{enumerate}
   \item[(i)]  Once $ W^{ 1,p(x) }_0 \big( \Omega_j \big) \subset W^{ 1,p(x) } \big( \Omega'_j \big) $ and $ \phi_{ \lambda,j } \big( \gamma(1) \big) = I_j \big( \gamma(1) \big) $ for $ \gamma \in \Gamma_j $, we have $ \Gamma_j \subset \Gamma_{ \lambda,j } $. This way
				 $$
				   c_{ \lambda,j } = \inf_{ \gamma \in \Gamma_{ \lambda,j } } \max_{ t \in [0,1] } \phi_{ \lambda,j } \big( \gamma(t) \big) \le \inf_{ \gamma \in \Gamma_j } \max_{ t \in [0,1] }            \phi_{ \lambda,j } \big( \gamma(t) \big) = \inf_{ \gamma \in \Gamma_j } \max_{ t \in [0,1] } I_j \big( \gamma(t) \big) = c_j.
				 $$
	 \item[(ii)] It suffices to show that $ c_{ \lambda_n,j } \to c_j, \text{ as } n \to \infty $, for all sequences $ ( \lambda_n ) $ in $ [1,\infty) $ with $ \lambda_n \to \infty, \text{ as } n \to \infty  $. Let $ \left( \lambda_n \right) $ be such a sequence and consider an arbitrary subsequence of $ \left( c_{ \lambda_n,j } \right) $ (not relabelled) . Let $ w_n \in W^{ 1,p(x) } \big( \Omega'_j \big) $ with 
		     $$
					  \phi_{ \lambda_n,j } \big( w_n \big) = c_{ \lambda_n,j }	\, \text{ and } \, \phi'_{ \lambda_n,j } \big( w_n  \big) = 0.
				 $$
By the previous item, $ \big( c_{ \lambda_n,j } \big) $ is bounded. Then, there exists $ \big( w_{ n_k } \big) $ subsequence of $ \big( w_n \big) $ such that $ \phi_{ \lambda_{ n_k },j } \big( w_{ n_k } \big) $ converges and $ \phi'_{ \lambda_{ n_k },j } \big( w_{ n_k }  \big) = 0 $. Now, repeating the same type of arguments explored in the proof of  Proposition \ref{(PS) infty condition}, there is $ w \in W^{ 1,p(x) }_0 \big( \Omega_j \big) \setminus \{0\} \subset W^{ 1,p(x) } \big( \Omega'_j \big) $ such that 
				 $$
				    w_{ n_k } \to w \text{ in } W^{ 1,p(x) } \big( \Omega'_j \big), \text{ as } k \to \infty.
				 $$
				 Furthermore, we also can prove that 
				 $$
				    c_{ \lambda_{ n_k },j } = \phi_{ \lambda_{ n_k },j } \big( w_{ n_k } \big) \to I_j(w)
				 $$
				and 
				 $$
				    0 = \phi'_{ \lambda_{ n_k },j } \big( w_{ n_k }  \big) \to I'_j(w).
				 $$
				Then,  by $ (f_4) $,
				 $$
				    \lim_k c_{ \lambda_{ n_k },j } \ge c_j.
				 $$
				 The last inequality together with item (i) implies
				 $$
				    c_{ \lambda_{ n_k },j } \to c_j, \text{ as } k \to \infty.
				 $$
				 This establishes the asserted result.
\end{enumerate}
\end{proof}

\vspace{.5cm}
In the sequel, let $ R > 1 $ verifying 
\begin{equation} \label{R}
   0< I_j \left( \frac{1}{R} w_j \right), I_j(R w_j)< c_j,  \text{ for } j = 1, \ldots, k.
\end{equation}
There holds that
$$
   c_j = \max_{ t \in [1/R^2,1] } I_j (t R w_j ), \text{ for } j = 1, \ldots, k.
$$
Moreover, to simplify the notation, we rename the components $ \Omega_j $ of $ \Omega $ in way such that $ \Upsilon = \{ 1, 2, \ldots, l \} $  for some $ 1 \le l \le k $. Then, we define:
\begin{gather*}
   \gamma_0 ( t_1, \ldots, t_l )(x) = \sum_{j=1}^l t_j R w_j(x), \, \forall ( t_1, \ldots, t_l )\in [1/R^2,1]^l, \\
   \Gamma_\ast = \Big\{ \gamma \in C \big( [1/R^2,1]^l, E_\lambda \setminus \{ 0 \} \big) \, ; \, \gamma = \gamma_0 \text{ on } \partial [1/R^2,1]^l \Big\}
\end{gather*}
and
$$
	 b_{ \lambda, \Upsilon } = \inf_{ \gamma \in \Gamma_\ast } \max_{ ( t_1, \ldots, t_l )\in [1/R^2,1]^l } \phi_\lambda \big( \gamma ( t_1, \ldots, t_l ) \big).
$$

\vspace{.3cm}
Next, our intention is proving that $ b_{ \lambda, \Upsilon } $ is a critical value for $ \phi_\lambda $. However, to do this, we need to some technical lemmas. The arguments used are the same found in \cite{Alves}, however for reader's convenience we will repeat their proofs

\begin{lemma} \label{solution's existence}
   For all $ \gamma \in \Gamma_\ast $, there exists $ (s_1, \ldots, s_l ) \in [1/R^2,1]^l $ such that
$$
   \phi'_{ \lambda,j } \big( \gamma ( s_1, \ldots, s_l ) \big) \big( \gamma ( s_1, \ldots, s_l ) \big) = 0, \, \forall j \in \Upsilon.
$$
\end{lemma}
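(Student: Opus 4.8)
The plan is to reduce the statement to a finite-dimensional zero-finding problem and solve it with the Poincar\'e--Miranda theorem. Define $\Psi\colon [1/R^2,1]^l\to\mathbb R^l$ by $\Psi=(\Psi_1,\dots,\Psi_l)$ with
$$
   \Psi_j(t_1,\dots,t_l)=\phi'_{\lambda,j}\big(\gamma(t_1,\dots,t_l)\big)\big(\gamma(t_1,\dots,t_l)\big),\qquad j\in\Upsilon,
$$
where $\gamma(t_1,\dots,t_l)$ is understood restricted to $\Omega'_j$. Since each $\phi_{\lambda,j}$ is of class $C^1$, the restriction $E_\lambda\hookrightarrow W^{1,p(x)}(\Omega'_j)$ is continuous, and $\gamma$ is continuous, the map $\Psi$ is continuous on the cube. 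It then suffices, by the Poincar\'e--Miranda theorem (equivalently, via a Brouwer degree / fixed point argument, after the obvious affine rescaling of $[1/R^2,1]^l$ to $[0,1]^l$), to show that on the two opposite faces transverse to the $j$-th coordinate the component $\Psi_j$ has strict opposite signs, namely $\Psi_j>0$ on $\{t_j=1/R^2\}$ and $\Psi_j<0$ on $\{t_j=1\}$, for every $j\in\Upsilon$.

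To check these sign conditions I would exploit that every such face lies in $\partial[1/R^2,1]^l$, where $\gamma$ coincides with $\gamma_0$; thus on those faces $\gamma(t)=\sum_{i=1}^l t_iRw_i$. Because $\text{supp }w_i\subset\Omega_i\subset\Omega'_i$ and $\overline{\Omega'_i}\cap\overline{\Omega'_j}=\emptyset$ for $i\ne j$, only the $j$-th term is nonzero on $\Omega'_j$; since $\phi_{\lambda,j}$ integrates only over $\Omega'_j$ and $V\equiv 0$ on $\text{supp }w_j\subset\Omega_j$, it follows that on those faces
$$
   \Psi_j(t)=\phi'_{\lambda,j}(t_jRw_j)(t_jRw_j)=I'_j(t_jRw_j)(t_jRw_j),
$$
a quantity depending on $t_j$ alone. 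In particular $\Psi_j=I'_j(\tfrac1R w_j)(\tfrac1R w_j)$ on $\{t_j=1/R^2\}$ and $\Psi_j=I'_j(Rw_j)(Rw_j)$ on $\{t_j=1\}$.

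It remains to study the fibering map $\varphi_j(s)=I_j(sw_j)$, $s\in(0,\infty)$, which satisfies $\varphi'_j(s)=\tfrac1s I'_j(sw_j)(sw_j)$. Since $I'_j(w_j)=0$, the point $s=1$ is a critical point of $\varphi_j$, and $(f_4)$ (together with $(f_3)$) forces it to be the \emph{unique} critical point on $(0,\infty)$ and a strict global maximum; hence $\varphi'_j(s)>0$ for $0<s<1$ and $\varphi'_j(s)<0$ for $s>1$, and the same sign statements hold for $I'_j(sw_j)(sw_j)$. Evaluating at $s=1/R<1$ and at $s=R>1$ yields $\Psi_j>0$ on $\{t_j=1/R^2\}$ and $\Psi_j<0$ on $\{t_j=1\}$, which are precisely the boundary conditions needed. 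Poincar\'e--Miranda then gives $(s_1,\dots,s_l)\in[1/R^2,1]^l$ with $\Psi_j(s_1,\dots,s_l)=0$ for all $j\in\Upsilon$, which is the assertion.

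The main obstacle is the fibering-map analysis: proving that $s=1$ is the unique critical point and a strict maximum of $\varphi_j$, which for the $p(x)$-Laplacian functional is more delicate than in the constant-exponent case because $\varphi_j$ is not a pure power of $s$; the monotonicity supplied by $(f_4)$ is what makes it work, but the argument has to be run carefully by splitting the $|\nabla\cdot|^{p(x)}$ and $Z(x)|\cdot|^{p(x)}$ parts from the $F(x,\cdot)$ part. A secondary point requiring care is verifying that, on the relevant boundary faces, $\Psi_j$ really depends on $t_j$ only, which is exactly where the disjointness of the $\overline{\Omega'_i}$ and the inclusion $\text{supp }w_j\subset\Omega_j$ enter.
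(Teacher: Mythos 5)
Your proposal is correct and follows essentially the same strategy as the paper: reduce the claim to a topological zero-finding problem on the cube $[1/R^2,1]^l$ for the map $\mathbf{t}\mapsto\big(\phi'_{\lambda,1}(\gamma(\mathbf{t}))\gamma(\mathbf{t}),\ldots,\phi'_{\lambda,l}(\gamma(\mathbf{t}))\gamma(\mathbf{t})\big)$, observe that on $\partial[1/R^2,1]^l$ one has $\gamma=\gamma_0$ and each component decouples to $I'_j(t_jRw_j)(t_jRw_j)$, and then appeal to a finite-dimensional topological principle. You invoke Poincar\'e--Miranda and therefore need strict opposite signs on the faces $\{t_j=1/R^2\}$ and $\{t_j=1\}$, which you derive from the fibering-map analysis of $s\mapsto I_j(sw_j)$ using $(f_4)$. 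The paper instead applies Brouwer degree directly: it excludes boundary zeros by recalling that $c_j$ is the infimum of $I_j$ over its Nehari manifold, so that a zero of $I'_j(sw_j)(sw_j)$ at $s\in\{1/R,R\}$ would force $I_j(sw_j)\ge c_j$, contradicting \eqref{R}; it then equates $\deg(\widetilde\gamma,(1/R^2,1)^l,0)$ with $\deg(\widetilde{\gamma_0},(1/R^2,1)^l,0)$ and uses that $\widetilde{\gamma_0}$ has the single interior zero $(1/R,\ldots,1/R)$. Your version spells out a point the paper leaves implicit: a unique zero alone does not give nonzero degree, and the one-dimensional sign changes you establish through $(f_4)$ are exactly what makes $\deg(\widetilde{\gamma_0},\cdot,0)\ne 0$; the price is having to run the $(f_4)$-based monotonicity argument carefully in the variable-exponent setting, which you correctly flag as the delicate step. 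The paper's Nehari-level characterization is a slightly slicker way to rule out boundary zeros, but the two arguments are in substance the same.
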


\begin{proof}
   Given $ \gamma \in \Gamma_\ast $, consider $ \widetilde{\gamma} \colon [1/R^2,1]^l \to \mathbb R^l $ such that
$$
   \widetilde{\gamma} ( \textbf{t} ) = \Big( \phi'_{ \lambda,1 } \big( \gamma ( \textbf{t} ) \big) \gamma ( \textbf{t} ), \ldots, \phi'_{ \lambda,l } \big( \gamma ( \textbf{t} ) \big) \gamma ( \textbf{t} ) \Big), \text{ where } \textbf{t} = ( t_1, \ldots, t_l ).
$$
For $ \textbf{t} \in \partial [1/R^2,1]^l $, it holds $ \widetilde{\gamma} ( \textbf{t} ) = \widetilde{\gamma_0} ( \textbf{t} ) $. From this, we observe that there is no $ \textbf{t} \in \partial [1/R^2,1]^l  $ with $ \widetilde{\gamma} ( \textbf{t} ) = 0 $. Indeed, for any $ j \in \Upsilon $,
$$
   \phi'_{ \lambda,j } \big( \gamma_0 ( {\bf{t}} ) \big) \gamma_0 ( {\bf{t}} ) = I'_j ( t_j R w_j ) ( t_j R w_j ).
$$
This form, if $ {\bf{t}} \in \partial [1/R^2,1]^l $, then $ t_{j_0} =1 $ or $ t_{j_0} = \frac{1}{R^2} $, for some $ j_0 \in \Upsilon $. Consequently, 
$$
   \phi'_{ \lambda,j_0 } \big( \gamma_0 ( {\bf{t}} ) \big) \gamma_0 ( {\bf{t}} ) = I'_{j_0} ( R w_{j_0} ) ( R w_{j_0} ) \, \text{ or } \, \phi'_{ \lambda,j_0 } \big( \gamma_0 ( {\bf{t}} ) \big) \gamma_0 ( {\bf{t}} ) = I'_{j_0} \left( \frac{1}{R} w_{j_0} \right) \left( \frac{1}{R} w_{j_0} \right).
$$
Therefore, if $ \phi'_{ \lambda,j_0 } \big( \gamma_0 ( {\bf{t}} ) \big) \gamma_0 ( {\bf{t}} ) = 0 $, we get $ I_{j_0} ( R w_{j_0}  ) \ge c_{j_0} $ or $ I_{j_0} \left( \frac{1}{R} w_{j_0} \right) \ge c_{j_0} $, which is a contradiction with \eqref{R}.

Now, we compute the degree $ \deg \big( \widetilde{\gamma}, (1/R^2,1)^l, (0, \ldots, 0 ) \big) $. Since
$$
   \deg \big( \widetilde{\gamma}, (1/R^2,1)^l, (0, \ldots, 0 ) \big) = \deg \big( \widetilde{\gamma_0}, (1/R^2,1)^l, (0, \ldots, 0 ) \big),
$$
and, for $ \textbf{t} \in (1/R^2,1)^l $,
$$
  \widetilde{\gamma_0} ( \textbf{t} ) = 0 \iff {\bf{t}} = \left( \frac{1}{R}, \ldots, \frac{1}{R} \right),
$$
we derive
$$
   \deg \big( \widetilde{\gamma}, (1/R^2,1)^l, (0, \ldots, 0 ) \big) \ne 0.
$$
This shows what was stated.
\end{proof}

\begin{proposition} \label{blambdagamma}
   If $ c_{ \lambda,\Upsilon } = \displaystyle \sum_{ j=1 }^l c_{ \lambda,j } \, \text{ and } \, c_\Upsilon = \sum_{ j=1 }^l c_j $, then
\begin{enumerate}
   \item[(i)] $ c_{ \lambda,\Upsilon } \le b_{ \lambda,\Upsilon } \le c_\Upsilon, \, \forall \lambda \ge 1 $;
	 \item[(ii)] $ b_{ \lambda,\Upsilon } \to c_\Upsilon, \text{ as } \lambda \to \infty $;
	 \item[(iii)] $ \phi_\lambda \big( \gamma({\bf{t}}) \big) < c_\Upsilon, \, \forall \lambda \ge 1, \gamma \in \Gamma_\ast \text{ and } {\bf{t}} = (t_1, \ldots, t_l ) \in \partial [1/R^2,1]^l $.
\end{enumerate}	
\end{proposition}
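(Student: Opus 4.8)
The plan is to transfer everything to the component functionals $\phi_{\lambda,j}$ and $I_j$, exploiting that the closures $\overline{\Omega'_j}$, $j\in\Upsilon$, are pairwise disjoint and that $\chi_\Upsilon\equiv 1$, $V\equiv 0$ on $\Omega_\Upsilon$. First I would establish a \emph{decomposition inequality}: for every $u\in E_\lambda$,
$$
\phi_\lambda(u)\ \ge\ \sum_{j=1}^{l}\phi_{\lambda,j}\big(u|_{\Omega'_j}\big),
$$
with equality when $u$ is supported in $\overline{\Omega_\Upsilon}$, in which case moreover $\phi_{\lambda,j}(u|_{\Omega'_j})=I_j(u|_{\Omega_j})$. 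This follows by splitting the integral defining $\phi_\lambda$ over $\Omega'_1,\dots,\Omega'_l$ and over $\mathbb R^N\setminus\Omega'_\Upsilon$: on each $\Omega'_j$ one has $\chi_\Upsilon=1$, hence $G=F$ there; on $\mathbb R^N\setminus\Omega'_\Upsilon$ one has $G=\tilde F$, and \eqref{til F estimate} together with \eqref{modular relation 2} shows the leftover term is $\ge \tfrac{\delta}{p_+}\varrho_{\lambda,\mathbb R^N\setminus\Omega'_\Upsilon}(u)\ge 0$.

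With this in hand, (iii) and the upper bound in (i) are direct. If $\mathbf t\in\partial[1/R^2,1]^l$, then $\gamma(\mathbf t)=\gamma_0(\mathbf t)=\sum_{j=1}^{l}t_jRw_j$ is supported in $\overline{\Omega_\Upsilon}$, so $\phi_\lambda(\gamma(\mathbf t))=\sum_j I_j(t_jRw_j)$; since $c_j=\max_{t\in[1/R^2,1]}I_j(tRw_j)$ and some coordinate $t_{j_0}$ equals $1/R^2$ or $1$, property \eqref{R} gives $I_{j_0}(t_{j_0}Rw_{j_0})<c_{j_0}$, whence $\phi_\lambda(\gamma(\mathbf t))<\sum_j c_j=c_\Upsilon$, which is (iii). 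For the upper bound, $\gamma_0\in\Gamma_\ast$ (it is continuous, fixes the boundary, and is nowhere zero because the $w_j$ have disjoint supports), so $b_{\lambda,\Upsilon}\le\max_{\mathbf t}\phi_\lambda(\gamma_0(\mathbf t))=\sum_j\max_{t_j\in[1/R^2,1]}I_j(t_jRw_j)=\sum_j c_j=c_\Upsilon$.

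For the lower bound in (i), fix $\gamma\in\Gamma_\ast$ and apply Lemma \ref{solution's existence} to get $\mathbf s=(s_1,\dots,s_l)\in[1/R^2,1]^l$ with $\phi'_{\lambda,j}(\gamma(\mathbf s))\gamma(\mathbf s)=0$ for all $j\in\Upsilon$. Using the Nehari-type description of the mountain-pass level, $c_{\lambda,j}=\inf\{\phi_{\lambda,j}(v):v\not\equiv 0\ \text{on}\ \Omega'_j,\ \phi'_{\lambda,j}(v)v=0\}$ (valid under $(f_3)$ and $(f_4)$), one obtains $\phi_{\lambda,j}(\gamma(\mathbf s))\ge c_{\lambda,j}$, and combined with the decomposition inequality,
$$
\max_{\mathbf t}\phi_\lambda(\gamma(\mathbf t))\ \ge\ \phi_\lambda(\gamma(\mathbf s))\ \ge\ \sum_{j=1}^{l}\phi_{\lambda,j}(\gamma(\mathbf s))\ \ge\ \sum_{j=1}^{l}c_{\lambda,j}=c_{\lambda,\Upsilon};
$$
taking the infimum over $\gamma\in\Gamma_\ast$ gives $b_{\lambda,\Upsilon}\ge c_{\lambda,\Upsilon}$. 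Finally (ii) follows by squeezing: by (i), $c_{\lambda,\Upsilon}\le b_{\lambda,\Upsilon}\le c_\Upsilon$, while the preceding lemma gives $c_{\lambda,j}\to c_j$, so $c_{\lambda,\Upsilon}=\sum_j c_{\lambda,j}\to\sum_j c_j=c_\Upsilon$ as $\lambda\to\infty$.

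The step I expect to require most care is the lower bound in (i). It rests on two ingredients: establishing the Nehari-manifold characterization of $c_{\lambda,j}$ (standard, using $(f_3)$ for boundedness of the fibering map $t\mapsto\phi_{\lambda,j}(tv)$ and $(f_4)$ for its unimodality, so that the mountain-pass and Nehari levels coincide); and, more delicately, checking that the point $\mathbf s$ furnished by Lemma \ref{solution's existence} is one at which $\gamma(\mathbf s)$ does not vanish identically on any $\Omega'_j$, so that the characterization may actually be invoked at $\gamma(\mathbf s)$ for each $j$ — this non-degeneracy is the real technical crux, and is where the mountain-pass barrier coming from $(f_2)$ (no nontrivial solution of $\phi'_{\lambda,j}(v)v=0$ is small in norm) together with the location of $\mathbf s$ in the open cube $(1/R^2,1)^l$ must be used.
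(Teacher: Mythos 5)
Your argument reproduces the paper's proof essentially verbatim: the decomposition inequality $\phi_\lambda(u)\ge\sum_{j}\phi_{\lambda,j}(u)$ (obtained by controlling the remainder over $\mathbb R^N\setminus\Omega'_\Upsilon$ via \eqref{til F estimate}), the evaluation along $\gamma_0$ for the upper bound and for (iii) via \eqref{R}, the squeeze for (ii), and the combination of Lemma \ref{solution's existence} with the Nehari-type description of $c_{\lambda,j}$ for the lower bound are exactly the steps the paper takes. The one place you go beyond the paper is your explicit worry about nondegeneracy at $\mathbf s$: you are right that the characterization $c_{\lambda,j}=\inf\{\phi_{\lambda,j}(v):v\ne0,\ \phi'_{\lambda,j}(v)v=0\}$ only gives $\phi_{\lambda,j}(\gamma(\mathbf s))\ge c_{\lambda,j}$ when $\gamma(\mathbf s)|_{\Omega'_j}\ne0$, and the paper applies it without checking this. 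That is a legitimate observation, but the fix you sketch does not address it: the barrier from $(f_2)$ excludes \emph{small nontrivial} solutions of $\phi'_{\lambda,j}(v)v=0$, which says nothing about the possibility that $\gamma(\mathbf s)$ vanishes identically on some $\Omega'_j$ (the trivial solution satisfies the constraint automatically), and the fact that $\mathbf s$ lies in the open cube $(1/R^2,1)^l$ imposes no constraint on $\gamma(\mathbf s)$, since $\gamma\in\Gamma_\ast$ is arbitrary in the interior and only agrees with $\gamma_0$ on $\partial[1/R^2,1]^l$. So you have correctly located the delicate step, but your proposed mechanism for closing it would not work; the paper simply passes over the point in silence.
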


\begin{proof}
\begin{enumerate}
   \item[(i)] Once $ \gamma_0 \in \Gamma_\ast $, 
	       $$
				    b_{ \lambda,\Upsilon } \le \max_{ ( t_1, \ldots, t_l ) \in [1/R^2,1]^l } \phi_\lambda \big( \gamma_0 ( t_1, \ldots, t_l ) \big) = \max_{ ( t_1, \ldots, t_l )\in [1/R^2,1]^l } \sum_{ j=1 }^l I_j ( t_j R w_j ) = c_\Upsilon.
				 $$
				 Now, fixing $ {\bf s} = (s_1, \ldots, s_l) \in [1/R^2,1]^l $ given in Lemma \ref{solution's existence} and recalling that
				 $$
				    c_{ \lambda,j } = \inf \left\{ \phi_{ \lambda,j } (u) \, ; \, u \in W^{ 1,p(x) } \big( \Omega'_j \big) \setminus \{ 0 \} \text{ and } \phi'_{ \lambda,j }(u)u = 0 \right\},
				 $$		
				 it follows that
				 $$
				    \phi_{ \lambda,j } \big( \gamma( {\bf s } ) \big) \ge c_{ \lambda,j }, \, \forall j \in \Upsilon.
				 $$
			From \eqref{til F estimate}, 
				 $$
				    \phi_{ \lambda, \mathbb R^N \setminus \Omega'_\Upsilon } (u) \ge 0, \, \forall u \in W^{ 1,p(x) } \big( \mathbb R^N \setminus \Omega'_\Upsilon \big),
				 $$
which leads to 
				 $$
				    \phi_\lambda \big( \gamma( {\bf t} ) \big) \ge \sum_{ j=1 }^l \phi_{ \lambda,j } \big( \gamma( {\bf t} ) \big), \, \forall \textbf{t} = (t_1, \ldots, t_l) \in [1/R^2,1]^l.
				 $$
				 Thus
				 $$
				    \max_{ ( t_1, \ldots, t_l )\in [1/R^2,1]^l } \phi_\lambda \big( \gamma ( t_1, \ldots, t_l ) \big) \ge \phi_\lambda \big( \gamma( \textbf{s} ) \big) \ge c_{ \lambda,\Upsilon },
				 $$
			showing that
				 $$
				    b_{ \lambda,\Upsilon } \ge c_{ \lambda,\Upsilon };
				 $$
	 \item[(ii)] This limit is clear by the previous item, since we already know $ c_{ \lambda,j } \to c_j $, as $ \lambda \to \infty $;
	 \item[(iii)] For $ \textbf{t} = ( t_1, \ldots, t_l ) \in \partial [1/R^2,1]^l $, it holds $ \gamma ( \textbf{t} ) = \gamma_0 ( \textbf{t} ) $. From this,
	       $$
				    \phi_\lambda \big( \gamma ( \textbf{t} ) \big) = \sum_{j=1}^l I_j ( t_j R w_j ).
				 $$
				 Writing
				 $$
				    \phi_\lambda \big( \gamma ( \textbf{t} ) \big) = \sum_{j=1 \atop j \ne j_0 }^l I_j ( t_j R w_j ) + I_{j_0} ( t_{j_0} R w_{j_0} ),
				 $$
				 where $ t_{j_0} \in \left\{ \frac{1}{R^2}, 1 \right\} $, from \eqref{R} we derive
				 $$
				    \phi_\lambda \big( \gamma ( \textbf{t} ) \big) \le c_\Upsilon - \epsilon,
				 $$
				 for some $ \epsilon > 0 $, so (iii).
\end{enumerate}
\end{proof}

\begin{corollary}
   $ b_{ \lambda,\Upsilon } $ is a critical value of $ \phi_\lambda $, for $ \lambda $ sufficiently large.	
\end{corollary}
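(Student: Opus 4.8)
The plan is to argue by contradiction via the quantitative deformation lemma; the two ingredients that make this possible are already available, namely that $\phi_\lambda$ satisfies the $(PS)$ condition (Proposition \ref{(PS) condition}) and that, by Proposition \ref{blambdagamma}, for large $\lambda$ the minimax value $b_{\lambda,\Upsilon}$ lies strictly above the values taken by $\phi_\lambda$ on the boundary datum $\gamma_0$ and strictly above $\phi_\lambda(0)=0$.

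First I would record a uniform gap on $\partial[1/R^2,1]^l$. Since $w_j\in W^{1,p(x)}_0(\Omega_j)$ and $V\equiv 0$ on $\Omega_\Upsilon$, the value $\phi_\lambda(\gamma_0(\textbf{t}))=\sum_{j=1}^{l}I_j(t_jRw_j)$ does not depend on $\lambda$; hence, by compactness of $\partial[1/R^2,1]^l$ and \eqref{R}, there is $\epsilon_0>0$ with
$$
\phi_\lambda(\gamma(\textbf{t}))=\phi_\lambda(\gamma_0(\textbf{t}))\le c_\Upsilon-\epsilon_0 \qquad \text{for all } \lambda\ge 1,\ \gamma\in\Gamma_\ast,\ \textbf{t}\in\partial[1/R^2,1]^l .
$$
By Proposition \ref{blambdagamma}(ii) there is $\Lambda\ge 1$ such that $b_{\lambda,\Upsilon}>c_\Upsilon-\tfrac{\epsilon_0}{2}$ for every $\lambda\ge\Lambda$, while Proposition \ref{blambdagamma}(i) gives $b_{\lambda,\Upsilon}\ge c_{\lambda,\Upsilon}>0$. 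Fix $\lambda\ge\Lambda$, write $b=b_{\lambda,\Upsilon}$, and suppose for contradiction that $b$ is not a critical value of $\phi_\lambda$.

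Then, by the $(PS)$ condition and the deformation lemma, there exist $\epsilon\in\big(0,\tfrac14\min\{\epsilon_0,b\}\big)$ and $\eta\in C([0,1]\times E_\lambda,E_\lambda)$ such that $\eta(0,\cdot)=\mathrm{id}$, $\eta(t,u)=u$ whenever $\phi_\lambda(u)\notin[b-2\epsilon,b+2\epsilon]$, $t\mapsto\phi_\lambda(\eta(t,u))$ is nonincreasing, $\eta\big(1,\{\phi_\lambda\le b+\epsilon\}\big)\subset\{\phi_\lambda\le b-\epsilon\}$, and, since the flow is switched off where $\phi_\lambda\le b-2\epsilon$, $\phi_\lambda(\eta(1,u))\ge b-2\epsilon$ whenever $\phi_\lambda(u)\ge b-2\epsilon$. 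Choosing $\gamma\in\Gamma_\ast$ with $\max_{\textbf{t}}\phi_\lambda(\gamma(\textbf{t}))\le b+\epsilon$ and setting $\hat\gamma=\eta(1,\gamma(\cdot))$, one checks that $\hat\gamma\in\Gamma_\ast$: on $\partial[1/R^2,1]^l$ we have $\phi_\lambda(\gamma_0(\textbf{t}))\le c_\Upsilon-\epsilon_0<b-2\epsilon$, so $\hat\gamma=\gamma_0$ there; and for each $\textbf{t}$ either $\phi_\lambda(\gamma(\textbf{t}))\ge b-2\epsilon$, whence $\phi_\lambda(\hat\gamma(\textbf{t}))\ge b-2\epsilon>0$ and $\hat\gamma(\textbf{t})\ne 0$, or $\phi_\lambda(\gamma(\textbf{t}))<b-2\epsilon$, whence $\hat\gamma(\textbf{t})=\gamma(\textbf{t})\ne 0$. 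But then $\max_{\textbf{t}}\phi_\lambda(\hat\gamma(\textbf{t}))\le b-\epsilon<b_{\lambda,\Upsilon}$, contradicting the definition of $b_{\lambda,\Upsilon}$ as $\inf_{\gamma\in\Gamma_\ast}\max_{\textbf{t}}\phi_\lambda(\gamma(\textbf{t}))$. Hence $b_{\lambda,\Upsilon}$ is a critical value of $\phi_\lambda$ for every $\lambda\ge\Lambda$.

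The only nonroutine point — and hence the main obstacle — is that $\Gamma_\ast$ consists of maps into $E_\lambda\setminus\{0\}$ with the prescribed value $\gamma_0$ on $\partial[1/R^2,1]^l$, so one must ensure the deformed path still avoids $0$ and still equals $\gamma_0$ on the boundary. This is exactly what forces ``$\lambda$ sufficiently large'': it is Proposition \ref{blambdagamma}(ii) (giving $b_{\lambda,\Upsilon}\to c_\Upsilon$), combined with the uniform boundary gap $\epsilon_0$ from Proposition \ref{blambdagamma}(iii), that places the deformation band $[b-2\epsilon,b+2\epsilon]$ strictly above both $\phi_\lambda(0)=0$ and the boundary values, so that $\eta(1,\cdot)$ acts as the identity near $0$ and on $\gamma_0\big(\partial[1/R^2,1]^l\big)$.
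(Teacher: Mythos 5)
Your argument is correct and follows essentially the same route as the paper: use Proposition \ref{blambdagamma}(iii) to get a uniform gap between $\phi_\lambda$ on $\gamma_0(\partial[1/R^2,1]^l)$ and $c_\Upsilon$, use (ii) to push $b_{\lambda,\Upsilon}$ above that boundary value for large $\lambda$, then derive a contradiction from the quantitative deformation lemma. One small point where you are actually more careful than the paper's own write-up: you explicitly verify that the deformed path $\hat\gamma$ stays in $E_\lambda\setminus\{0\}$ (splitting into the cases $\phi_\lambda(\gamma(\mathbf{t}))\ge b-2\epsilon$ and $<b-2\epsilon$), whereas the paper simply asserts $\widetilde\gamma\in\Gamma_\ast$ without addressing the nonvanishing constraint — this check is genuinely needed since $\Gamma_\ast$ consists of maps into $E_\lambda\setminus\{0\}$, and your choice $\epsilon<\tfrac14\min\{\epsilon_0,b\}$ is exactly what makes it go through.
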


\begin{proof}
   Assume $ b_{ \widetilde{\lambda},\Upsilon } $ is not a critical value of $ \phi_{\widetilde{\lambda}} $ for some $ \widetilde{\lambda}$. We will prove that exists $ \lambda_1 $ such that $ \widetilde{\lambda} < \lambda_1 $. Indeed, by item (iii) of Proposition \ref{blambdagamma}, we have seen that
$$
   \phi_\lambda \big( \gamma_0 ( \textbf{t} ) \big) < c_\Upsilon , \, \forall \lambda \ge 1, \, \textbf{t} \in \partial [1/R^2,1]^l.
$$
This way
$$
   {\cal M} = \max_{ \textbf{t} \in \partial [1/R^2,1]^l } \phi_{ \widetilde{\lambda} } \big( \gamma_0 ( \textbf{t} ) \big) < c_\Upsilon.
$$
Since $ b_{ \lambda,\Upsilon } \to c_\Upsilon $ (item (ii) of Proposition \ref{blambdagamma}), there exists $ \lambda_1 > 1 $ such that if $ \lambda \ge \lambda_1 $,  then
$$
   {\cal M} < b_{ \lambda,\Upsilon }.
$$
So, if $ \widetilde{\lambda} \ge \lambda_1 $, we can find $ \tau = \tau( \widetilde{\lambda} ) > 0 $ small enough, with the ensuing property 
\begin{equation} \label{M estimate}
   {\cal M} < b_{ \widetilde{\lambda},\Upsilon } - 2\tau.
\end{equation}
From the deformation's lemma \cite[Page 38]{W}, there is $ \eta \colon E_\lambda \to E_\lambda $ such that
$$
   \eta \left( \phi_{\widetilde{\lambda}}^{ b_{ \widetilde{\lambda},\Upsilon } +\tau } \right) \subset \phi_{\widetilde{\lambda}}^{ b_{ \widetilde{\lambda},\Upsilon} -\tau } \, \text{ and } \, \eta(u) = u, \text{ for } u \notin \phi_{\widetilde{\lambda}}^{-1} \big( [b_{ \widetilde{\lambda},\Upsilon }-2 \tau, b_{ \widetilde{\lambda},\Upsilon }+2 \tau] \big).
$$
Then, by \eqref{M estimate}, 
$$
   \eta \big( \gamma_0 ( \textbf{t} )\big) = \gamma_0 ( \textbf{t} ), \, \forall \textbf{t} \in \partial [1/R^2,1]^l.
$$
Now, using the definition of $ b_{ \widetilde{\lambda},\Upsilon } $, there exists $ \gamma_\ast \in \Gamma_\ast $ satisfying
\begin{equation} \label{estimate}
   \max_{ \textbf{t} \in [1/R^2,1]^l } \phi_{ \widetilde{\lambda} } \big( \gamma_\ast ( \textbf{t} ) \big) < b_{ \widetilde{\lambda},\Upsilon }+\tau.
\end{equation}
Defining
$$
   \widetilde{\gamma} ( \textbf{t} ) = \eta \big( \gamma_\ast ( \textbf{t} ) \big), \, \textbf{t} \in [1/R^2,1]^l,
$$
due to \eqref{estimate}, we obtain
$$
   \phi_{ \widetilde{\lambda} } \big( \widetilde{\gamma}( \textbf{t} ) \big) \le b_{ \widetilde{\lambda},\Upsilon } - \tau, \, \forall \textbf{t} \in [1/R^2,1]^l.
$$
But since $ \widetilde{\gamma} \in \Gamma_\ast $, we deduce
$$
   b_{ \widetilde{\lambda},\Upsilon } \le \max_{ \textbf{t} \in [1/R^2,1]^l } \phi_{\widetilde{\lambda}} \big( \widetilde{\gamma} ( \textbf{t} ) \big) \le b_{ \widetilde{\lambda},\Upsilon }-\tau,
$$
a contradiction. So, $ \widetilde{\lambda} < \lambda_1 $.
\end{proof}


\section{The proof of the main theorem}


To prove Theorem \ref{main}, we need to find nonnegative solutions $ u_\lambda $ for large values of $ \lambda $, which converges to a least energy solution in each $ \Omega_j $ $ (j \in \Upsilon) $ and to $ 0 $ in $\Omega_\Upsilon^{c}$ as $ \lambda \to \infty $. To this end, we will show two propositions which together with the Propositions \ref{(PS) infty condition} and  \ref{P:boundedness of the solutions} will imply that Theorem \ref{main} holds.

Henceforth, we denote by
$$
  r = R^{ p_+ } \sum_{ j=1 }^l \left( \frac{1}{p_+}-\frac{1}{\theta} \right)^{-1} c_j, \quad {\cal B}_r^\lambda = \big\{ u \in E_\lambda \, ; \, \varrho_\lambda (u) \le r \big\}
$$
and
$$
	\phi_\lambda^{ c_\Upsilon } = \big\{ u \in E_\lambda \, ; \, \phi_\lambda(u) \le c_{ \Upsilon } \big\}.
$$

Moreover, for small values of $ \mu $,
$$
   {\cal A}_\mu^\lambda = \left\{ u \in {\cal B}_r^{\lambda }\, ; \, \varrho_{ \lambda, \mathbb R^N \setminus \Omega_\Upsilon } (u) \le \mu, \, \left| \phi_{ \lambda,j }(u)-c_j \right| \le \mu, \, \forall j \in \Upsilon \right\}.
$$
We observe that
$$
   w = \sum_{ j=1 }^l w_j \in {\cal A}_\mu^\lambda \cap \phi_\lambda^{ c_\Upsilon },
$$
showing that $ {\cal A}_\mu^\lambda \cap \phi_\lambda^{ c_\Upsilon } \ne \emptyset $. Fixing
\begin{equation} \label{mu estimate}
   0 < \mu < \frac{1}{4} \min_{ j \in \Gamma } c_j,
\end{equation}
we have the following uniform estimate of $ \big\| \phi'_{ \lambda }(u) \big\| $ on the region $ \left( {\cal A}_{ 2 \mu }^\lambda \setminus {\cal A}_\mu^\lambda \right) \cap \phi_\lambda^{ c_\Upsilon } $.

\begin{proposition} \label{derivative estimate}
  Let $ \mu > 0 $ satisfying \eqref{mu estimate}. Then, there exist $ \Lambda_\ast \ge 1 $ and $ \sigma_0 >0   $ independent of $ \lambda $ such that
\begin{equation}
   \big\| \phi'_{ \lambda }(u) \big\| \ge \sigma_0, \text{ for } \lambda \ge \Lambda_\ast \text{ and all } u \in \left( {\cal A}_{ 2 \mu }^\lambda \setminus {\cal A}_\mu^\lambda \right) \cap \phi_\lambda^{ c_\Upsilon }.
\end{equation}
\end{proposition}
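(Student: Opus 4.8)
The plan is to argue by contradiction. Suppose the conclusion fails: then for every $n$ there exist $\lambda_n \ge n$ and $u_n \in \left( {\cal A}_{2\mu}^{\lambda_n} \setminus {\cal A}_\mu^{\lambda_n} \right) \cap \phi_{\lambda_n}^{c_\Upsilon}$ with $\left\| \phi'_{\lambda_n}(u_n) \right\| \to 0$. Since ${\cal A}_{2\mu}^{\lambda_n} \subset {\cal B}_r^{\lambda_n}$, we have $\varrho_{\lambda_n}(u_n) \le r$, so, exactly as in the proof of Proposition \ref{(PS) infty condition}, $\left( \| u_n \|_{\lambda_n} \right)$ is bounded and $(u_n)$ is bounded in $W^{1,p(x)}\big(\mathbb R^N\big)$. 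Moreover, using $G \ge 0$, the bound $\tilde F(x,t) \le \frac{\nu}{p(x)}|t|^{p(x)}$ and the subcritical growth of $F$ on the bounded set $\Omega'_\Upsilon$, the sequence $\left( \phi_{\lambda_n}(u_n) \right)$ is bounded; passing to a subsequence we may assume $\phi_{\lambda_n}(u_n) \to c$. Hence $(u_n)$ is a $(PS)_\infty$ sequence for $(\phi_\lambda)_{\lambda \ge 1}$, and Proposition \ref{(PS) infty condition} applies: up to a subsequence there is $u \in W^{1,p(x)}\big(\mathbb R^N\big)$ with $u_n \to u$ in $W^{1,p(x)}\big(\mathbb R^N\big)$, $u = 0$ outside $\Omega_\Upsilon$, $u \ge 0$, each restriction $u|_{\Omega_j}$ ($j \in \Upsilon$) a solution of $(P_j)$, and items $(iii)$--$(vi)$ hold.

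Next I would identify the limit of the localized energies. Combining the strong convergence $u_n \to u$ in $W^{1,p(x)}\big(\mathbb R^N\big)$ with item $(iii)$ of Proposition \ref{(PS) infty condition} (so that $\int_{\Omega'_j} \lambda_n V(x)|u_n|^{p(x)} \to 0$), the fact that $u = 0$ a.e. in $\Omega'_j \setminus \Omega_j$, and the subcritical growth of $f$ on the bounded set $\Omega'_j$, one gets $\phi_{\lambda_n,j}(u_n) \to I_j\big(u|_{\Omega_j}\big)$ for each $j \in \Upsilon$, exactly as in the proof of Proposition \ref{(PS) infty condition}$(vi)$. Since $u_n \in {\cal A}_{2\mu}^{\lambda_n}$ gives $\left| \phi_{\lambda_n,j}(u_n) - c_j \right| \le 2\mu$ for all $j \in \Upsilon$, passing to the limit yields $\left| I_j\big(u|_{\Omega_j}\big) - c_j \right| \le 2\mu$ for every $j \in \Upsilon$. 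By the choice \eqref{mu estimate} of $\mu$ we have $2\mu < c_j$, so $u|_{\Omega_j} = 0$ is impossible (it would force $c_j = \left| I_j(0) - c_j \right| \le 2\mu$); thus $u|_{\Omega_j} \ne 0$ for every $j \in \Upsilon$. As $u|_{\Omega_j}$ is then a nontrivial critical point of $I_j$ and every such critical point has energy at least the least energy (mountain pass) level $c_j$ — the characterization used in Section 6 via $(f_3)$ and $(f_4)$ — we obtain $I_j\big(u|_{\Omega_j}\big) \ge c_j$ for all $j \in \Upsilon$.

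On the other hand, $u_n \in \phi_{\lambda_n}^{c_\Upsilon}$ means $\phi_{\lambda_n}(u_n) \le c_\Upsilon$, and by item $(vi)$ of Proposition \ref{(PS) infty condition} we have $\phi_{\lambda_n}(u_n) \to \sum_{j \in \Upsilon} I_j\big(u|_{\Omega_j}\big)$; hence $\sum_{j \in \Upsilon} I_j\big(u|_{\Omega_j}\big) \le c_\Upsilon = \sum_{j \in \Upsilon} c_j$. Together with $I_j\big(u|_{\Omega_j}\big) \ge c_j$ for each $j$, this forces $I_j\big(u|_{\Omega_j}\big) = c_j$ for every $j \in \Upsilon$. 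Consequently $\phi_{\lambda_n,j}(u_n) \to c_j$, so $\left| \phi_{\lambda_n,j}(u_n) - c_j \right| \le \mu$ for $n$ large, for all $j \in \Upsilon$; and item $(v)$ gives $\varrho_{\lambda_n, \mathbb R^N \setminus \Omega_\Upsilon}(u_n) \to 0 \le \mu$ for $n$ large. Since also $u_n \in {\cal B}_r^{\lambda_n}$, we conclude $u_n \in {\cal A}_\mu^{\lambda_n}$ for all large $n$, contradicting $u_n \notin {\cal A}_\mu^{\lambda_n}$. This contradiction proves the existence of $\Lambda_\ast \ge 1$ and $\sigma_0 > 0$, depending only on $\mu$, with the stated property.

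I expect the main technical point to be the convergence $\phi_{\lambda_n,j}(u_n) \to I_j\big(u|_{\Omega_j}\big)$: one must pass from the weighted modular convergence supplied by Proposition \ref{(PS) infty condition} to convergence of the energy functionals, which carry the nonconstant weight $1/p(x)$ and live on $\Omega'_j$ rather than on $\Omega_j$, controlling the contributions on $\Omega'_j \setminus \Omega_j$ through item $(iii)$ and the vanishing of $u$ there (Brezis--Lieb, Proposition \ref{first Brezis-Lieb}, together with the compact embedding on $\Omega'_j$ handles the nonlinear term). The remaining ingredients — reading off $u|_{\Omega_j} \ne 0$ from \eqref{mu estimate}, the least-energy characterization of $c_j$, and the summation argument with item $(vi)$ — are then routine.
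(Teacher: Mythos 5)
Your proof is correct and follows essentially the same route as the paper's: contradiction, boundedness, invocation of Proposition \ref{(PS) infty condition} to pass to a limit $u$ with $\phi_{\lambda_n,j}(u_n)\to I_j(u)$, the least-energy characterization of $c_j$, and the summation constraint $\sum_j I_j(u)\le c_\Upsilon$. The only cosmetic difference is one of organization: the paper splits into the two cases ``$I_j(u)=c_j$ for all $j$'' and ``$I_{j_0}(u)=0$ for some $j_0$,'' deriving a contradiction from each (the latter contradicting the $2\mu$-constraint since $c_{j_0}>4\mu$), whereas you first use the $2\mu$-constraint up front to rule out $u|_{\Omega_j}=0$ for every $j$, then conclude $I_j(u)=c_j$ by the least-energy lower bound plus the budget $\sum I_j(u)\le c_\Upsilon$, and finally contradict $u_n\notin{\cal A}_\mu^{\lambda_n}$. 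Both uses of the hypothesis \eqref{mu estimate} are equivalent, and the logic is the same; there is no gap.
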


\begin{proof}
   We assume that there exist $ \lambda_n \to \infty $ and $ u_n \in \left( {\cal A}_{ 2 \mu }^{\lambda_n} \setminus {\cal A}_\mu^{\lambda_n} \right) \cap \phi_{\lambda_n}^{ c_\Upsilon } $ such that
$$
   \big\| \phi'_{ \lambda_n }(u_n) \big\| \to 0.
$$
Since $ u_n \in {\cal A}_{ 2 \mu }^{ \lambda_n } $, this implies $ \big( \varrho_{ \lambda_n } (u_n) \big) $ is a bounded sequence and, consequently, it follows that $ \big( \phi_{ \lambda_n }(u_n) \big) $ is also bounded. Thus, passing a subsequence if necessary, we can assume $\phi_{ \lambda_n }(u_n) $ converges. Thus, from Proposition \ref{(PS) infty condition}, there exists $ 0 \le u \in W^{ 1,p(x) }_0 \big( \Omega_\Upsilon \big) $ such that $ u_{ |_{ \Omega_j } }, \, j \in \Upsilon  $, is a solution for $ (P_j) $,
$$
	 \varrho_{ \lambda_n, \mathbb R^N \setminus \Omega_\Upsilon } (u_n) \to 0 \, \text{ and } \,  \phi_{ \lambda_n,j } (u_n) \to I_j(u).
$$
We know that $ c_j $ is the least energy level for $ I_j $. So, if $ u_{ |_{ \Omega_j } } \ne 0 $, then $ I_j(u) \ge c_j $. But since $ \phi_{ \lambda_n } (u_n) \le c_\Upsilon $, we must analyze the following possibilities:
\begin{enumerate}
   \item[(i)] $ I_j(u) = c_j, \, \forall j \in \Upsilon $;
	 \item[(ii)] $ I_{ j_0 }(u) = 0 $, for some $ j_o \in \Upsilon $.
\end{enumerate}

If (i) occurs, then for $ n $ large, it holds
$$
   \varrho_{ \lambda_n, \mathbb R^N \setminus \Omega_\Upsilon } (u_n) \le \mu \, \text{ and } \, \left| \phi_{ \lambda_n,j }(u_n)-c_j \right| \le \mu, \, \forall j \in \Upsilon.
$$
So $ u_n \in {\cal A}_\mu^{\lambda_n} $, a contradiction.

If (ii) occurs, then
$$
   \left| \phi_{ \lambda_n,j_0 }(u_n)-c_{ j_0 } \right| \to c_{ j_0 } > 4 \mu,
$$
which is a contradiction with the fact that $ u_n \in {\cal A}_{ 2 \mu }^{\lambda_n} $. Thus, we have completed the proof.
\end{proof}

\begin{proposition} \label{P}
   Let $ \mu > 0 $ satisfying \eqref{mu estimate} and $ \Lambda_\ast \ge 1 $ given in the previous proposition. Then, for $ \lambda \ge \Lambda_\ast $, there exists a solution $ u_\lambda $ of $ (A_\lambda) $ such that $ u_\lambda \in {\cal A}_\mu^\lambda \cap \phi_\lambda^{ c_\Upsilon } $.
\end{proposition}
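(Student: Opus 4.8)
\emph{Plan.} The idea is to argue by contradiction and, following \cite{Alves}, to deform the reference path $\gamma_0$ by a cut-off pseudo-gradient flow so as to violate the minimax characterization of $b_{\lambda,\Upsilon}$. Put $\mathcal M:=\max_{\mathbf t\in\partial[1/R^2,1]^l}\sum_{j\in\Upsilon}I_j(t_jRw_j)$, which does not depend on $\lambda$ and satisfies $\mathcal M<c_\Upsilon$ by \eqref{R}. Enlarging $\Lambda_\ast$ if necessary, I would fix $\lambda\ge\Lambda_\ast$ so large that $c_\Upsilon-b_{\lambda,\Upsilon}<\min\{\mu,\ c_\Upsilon-\mathcal M\}$ (possible by Proposition \ref{blambdagamma}(ii)) and assume, for contradiction, that $\phi_\lambda$ has no critical point in $\mathcal A_\mu^\lambda\cap\phi_\lambda^{c_\Upsilon}$. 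By Proposition \ref{derivative estimate} there is then no critical point in $\mathcal A_{2\mu}^\lambda\cap\phi_\lambda^{c_\Upsilon}$ whatsoever; since $\mathcal A_{2\mu}^\lambda$ is closed and $\phi_\lambda$ satisfies $(PS)$ (Proposition \ref{(PS) condition}), a standard compactness argument produces $\sigma\in(0,\sigma_0]$ and $\tau>0$ with $\|\phi_\lambda'(u)\|\ge\sigma$ for every $u\in\mathcal A_{2\mu}^\lambda\cap\phi_\lambda^{c_\Upsilon}$ having $\phi_\lambda(u)\ge b_{\lambda,\Upsilon}-\tau$; by the choice of $\lambda$ we may further shrink $\tau$ so that $b_{\lambda,\Upsilon}-\tau>\max\{c_\Upsilon-2\mu,\ \mathcal M\}$.

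Next I would choose a pseudo-gradient vector field $Y$ for $\phi_\lambda$ on $\{\phi_\lambda'\neq0\}$ together with locally Lipschitz cut-offs $\psi\colon E_\lambda\to[0,1]$, with $\psi\equiv1$ on $\mathcal A_\mu^\lambda$ and $\psi\equiv0$ off $\mathcal A_{2\mu}^\lambda$, and $h\colon\mathbb R\to[0,1]$, with $h\equiv1$ on $[b_{\lambda,\Upsilon},\infty)$ and $h\equiv0$ on $(-\infty,b_{\lambda,\Upsilon}-\tau]$, and then consider the globally defined, bounded-speed flow $\partial_s\eta=-\psi(\eta)\,h(\phi_\lambda(\eta))\,Y(\eta)/\|Y(\eta)\|$, $\eta(0,\cdot)=\mathrm{id}$. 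It has the usual properties: $s\mapsto\phi_\lambda(\eta(s,u))$ is non-increasing; it decreases at a rate bounded below by a positive constant while $\eta(s,u)\in\mathcal A_\mu^\lambda$ and $\phi_\lambda(\eta(s,u))\ge b_{\lambda,\Upsilon}-\tau$; $\eta(s,u)=u$ whenever $u\notin\mathcal A_{2\mu}^\lambda$ or $\phi_\lambda(u)\le b_{\lambda,\Upsilon}-\tau$; $\eta(s,\cdot)$ maps $\phi_\lambda^{c_\Upsilon}$ into itself; and no trajectory reaches $0$, since $0\notin\mathcal A_{2\mu}^\lambda$ (because $c_j>2\mu$ by \eqref{mu estimate}).

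Then I would set $\widetilde\gamma(\mathbf t):=\eta(T,\gamma_0(\mathbf t))$ for $T$ large. Since $\phi_\lambda(\gamma_0(\mathbf t))=\sum_j I_j(t_jRw_j)\le c_\Upsilon$ for every $\mathbf t$, and this value equals $\mathcal M<b_{\lambda,\Upsilon}-\tau$ on $\partial[1/R^2,1]^l$, the map $\widetilde\gamma$ is continuous, avoids $0$, stays in $\phi_\lambda^{c_\Upsilon}$, and coincides with $\gamma_0$ on $\partial[1/R^2,1]^l$; hence $\widetilde\gamma\in\Gamma_\ast$, and it suffices to prove $\max_{\mathbf t}\phi_\lambda(\widetilde\gamma(\mathbf t))<b_{\lambda,\Upsilon}$, contradicting the definition of $b_{\lambda,\Upsilon}$. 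For $\mathbf t$ with $\phi_\lambda(\gamma_0(\mathbf t))<b_{\lambda,\Upsilon}$ this is clear by monotonicity; and this case already covers every $\mathbf t$ with $\gamma_0(\mathbf t)\notin\mathcal A_{2\mu}^\lambda$, for then $\varrho_{\lambda,\mathbb R^N\setminus\Omega_\Upsilon}(\gamma_0(\mathbf t))=0$ and $\varrho_\lambda(\gamma_0(\mathbf t))\le r$ (the latter being exactly how $r$ was chosen, via $(f_3)$), so necessarily $I_{j_0}(t_{j_0}Rw_{j_0})<c_{j_0}-2\mu$ for some $j_0$ and hence $\phi_\lambda(\gamma_0(\mathbf t))\le c_\Upsilon-2\mu<b_{\lambda,\Upsilon}$.

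For the remaining $\mathbf t$ I would invoke $(f_4)$, which makes $t=1/R$ the unique maximum of $t\mapsto I_j(tRw_j)$ on $[1/R^2,1]$; since $c_\Upsilon-b_{\lambda,\Upsilon}<\mu$, the set $\{\mathbf t:\phi_\lambda(\gamma_0(\mathbf t))\ge b_{\lambda,\Upsilon}\}$ is contained in a small neighbourhood of $(1/R,\dots,1/R)$, on which $\gamma_0(\mathbf t)$ lies in the interior of $\mathcal A_\mu^\lambda$ (clustered around the interior point $w=\sum_j w_j$), so that $\psi\equiv1$ there and the flow, needing to lower $\phi_\lambda$ by at most $c_\Upsilon-b_{\lambda,\Upsilon}$, drives it below $b_{\lambda,\Upsilon}$. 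The hard part — carried out exactly as in \cite{Alves} — is to make this last step rigorous: one must verify that, while $\phi_\lambda\ge b_{\lambda,\Upsilon}-\tau$, a trajectory issuing from that fixed neighbourhood of $w$ cannot escape $\mathcal A_{2\mu}^\lambda$ before $\phi_\lambda$ has dropped below $b_{\lambda,\Upsilon}$, and this is precisely where the $\lambda$-uniform bound $\sigma_0$ of Proposition \ref{derivative estimate} on the annulus $\mathcal A_{2\mu}^\lambda\setminus\mathcal A_\mu^\lambda$ and the careful design of the cut-offs are needed. Once the contradiction is reached, $\phi_\lambda$ must possess a critical point $u_\lambda\in\mathcal A_\mu^\lambda\cap\phi_\lambda^{c_\Upsilon}$, and such a $u_\lambda$ is a solution of $(A_\lambda)$.
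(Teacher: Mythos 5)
Your overall strategy (contradiction, cut-off pseudo-gradient flow, show $b_{\lambda,\Upsilon}<c_\Upsilon$) is the same as the paper's, and the preliminary observations (boundary values of $\gamma_0$ stay fixed, $m_0 := \sup\{\phi_\lambda(\gamma_0(\mathbf t)) : \gamma_0(\mathbf t)\notin\mathcal A_\mu^\lambda\}<c_\Upsilon$ and is $\lambda$-independent, the flow maps $\phi_\lambda^{c_\Upsilon}$ into itself) are correct. However, the step you flag as ``the hard part'' is precisely the step that carries the whole proof, and you do not do it. Worse, your attempted reduction --- ``the dangerous $\mathbf t$ cluster near $(1/R,\ldots,1/R)$, and there $\gamma_0(\mathbf t)\in\mathrm{int}\,\mathcal A_\mu^\lambda$, so the flow drives $\phi_\lambda$ below $b_{\lambda,\Upsilon}$'' --- is exactly what fails without further argument, since the trajectory may leave $\mathcal A_{2\mu}^\lambda$ (where $\psi\equiv0$ kills the decrease) well before $\phi_\lambda$ has dropped by the required amount $c_\Upsilon-b_{\lambda,\Upsilon}$. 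Localizing the starting points near $(1/R,\ldots,1/R)$ does not control the trajectories.

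The paper closes this gap with a quantitative dichotomy you omit. It uses \emph{three} nested sets $\mathcal A_\mu^\lambda\subset\mathcal A_{3\mu/2}^\lambda\subset\mathcal A_{2\mu}^\lambda$, with the cut-off $\Psi\equiv1$ on the \emph{middle} set $\mathcal A_{3\mu/2}^\lambda$ (not merely on $\mathcal A_\mu^\lambda$ as in your construction --- that choice would spoil the argument below), a bounded-speed flow $\|\partial_t\eta\|_\lambda\le1$, and the Lipschitz constant $K_\ast$ for the local functionals $\phi_{\lambda,j}$ on $\mathcal B_r^\lambda$. Fixing $T=\sigma_0\mu/(K_\ast\widetilde d_\lambda)$ with $\widetilde d_\lambda=\min\{d_\lambda,\sigma_0\}$, either (Case 1) the trajectory stays in $\mathcal A_{3\mu/2}^\lambda$ for all $t\in[0,T]$, so $\Psi\equiv1$ and the descent rate is $\ge\widetilde d_\lambda/2$ throughout, giving a total drop $\ge\sigma_0\mu/(2K_\ast)$; or (Case 2) it hits $\partial\mathcal A_{3\mu/2}^\lambda$, hence crosses the annulus $\mathcal A_{3\mu/2}^\lambda\setminus\mathcal A_\mu^\lambda$; by the Lipschitz bound on $\phi_{\lambda,j}$ and the unit-speed bound, this crossing takes time $\ge\mu/(2K_\ast)$, during which $\Psi\equiv1$ and $\|\phi_\lambda'\|\ge\sigma_0$, again yielding a drop $\ge\sigma_0\mu/(2K_\ast)$. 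This produces the $\lambda$-uniform estimate $b_{\lambda,\Upsilon}\le\max\{m_0,\,c_\Upsilon-\sigma_0\mu/(2K_\ast)\}<c_\Upsilon$, which contradicts $b_{\lambda,\Upsilon}\to c_\Upsilon$ for $\lambda$ large. Without $K_\ast$, the intermediate set, and this case split, the argument has a genuine hole; your extra cut-off $h$ in the $\phi_\lambda$-value and the appeal to $(f_4)$ are not what is needed to fill it.
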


\begin{proof}
  Let $ \lambda \ge \Lambda_\ast $. Assume that there are no critical points of $ \phi_\lambda $ in $ {\cal A}_\mu^\lambda \cap \phi_\lambda^{ c_\Upsilon } $. Since $ \phi_\lambda $ is a $ (PS) $ functional, there exists a constant $ d_\lambda > 0 $ such that
$$
   \big\| \phi'_\lambda(u) \big\| \ge d_\lambda, \text{ for all } u \in {\cal A}_\mu^\lambda \cap \phi_\lambda^{ c_\Upsilon }.
$$
From Proposition \ref{derivative estimate} we have
$$
   \big\| \phi'_\lambda(u) \big\| \ge \sigma_0, \text{ for all } u \in \left( {\cal A}_{ 2 \mu }^{\lambda} \setminus {\cal A}_\mu^{\lambda} \right) \cap \phi_{\lambda}^{ c_\Upsilon },
$$
where $ \sigma_0 > 0 $ does not depend on $ \lambda $. In what follows,  $ \Psi \colon E_\lambda \to \mathbb R $ is a continuous functional verifying 
$$
   \Psi(u) =  1, \text{ for } u \in {\cal A}_{\frac{3}{2} \mu}^\lambda, \ \Psi(u) = 0, \text{ for } u \notin {\cal A}_{2 \mu}^\lambda
\, \text{ and } \,  0 \le \Psi(u) \le 1, \, \forall u \in E_\lambda.
$$
We also consider $ H \colon \phi_\lambda^{ c_\Upsilon } \to E_\lambda $ given by
$$
   H(u) =
\begin{cases}
   - \Psi(u) \big\| Y(u) \big\|^{ -1 } Y(u), \text{ for } u \in {\cal A}_{2 \mu}^\lambda, \\
   \phantom{- \Psi(u) \big\| Y(u) \big\|^{ -1 } Y()} 0, \text{ for } u \notin {\cal A}_{2 \mu}^\lambda, \\
\end{cases}
$$
where $ Y $ is a pseudo-gradient vector field for $ \Phi_\lambda $ on $ {\cal K} = \left\{ u \in E_\lambda \, ; \, \phi'_\lambda(u) \ne 0 \right\} $. Observe that  $ H $ is well defined, once $ \phi'_\lambda(u) \ne 0 $, for $ u \in {\cal A}_{2 \mu}^\lambda \cap \phi_\lambda^{ c_\Upsilon } $. The inequality
$$
   \big\| H(u) \big\| \le 1, \, \forall \lambda \ge \Lambda_* \text{ and } u \in \phi_\lambda^{ c_\Upsilon },
$$
guarantees that the deformation flow $ \eta \colon [0, \infty) \times \phi_\lambda^{ c_\Upsilon } \to \phi_\lambda^{ c_\Upsilon } $ defined by
$$
   \frac{d \eta}{dt} = H(\eta), \ \eta(0,u) = u \in \phi_\lambda^{ c_\Upsilon }
$$
verifies
\begin{gather}
   \frac{d}{dt} \phi_\lambda \big( \eta(t,u) \big) \le - \frac{1}{2} \Psi \big( \eta(t,u) \big) \big\| \phi'_\lambda \big( \eta(t,u) \big) \big\| \le 0, \label{eta derivative}\\
	 \left\| \frac{d \eta}{dt} \right\|_\lambda = \big\| H(\eta) \big\|_\lambda \le 1
\end{gather}
and
\begin{equation} \label{eta}
   \eta(t,u) = u \text{ for all } t \ge 0 \text{ and } u \in \phi_\lambda^{ c_\Upsilon } \setminus {\cal A}_{2 \mu}^\lambda.
\end{equation}
We study now two paths, which are relevant for what follows: \\

$ \noindent \bullet $ The path $ {\bf t} \mapsto \eta \big( t, \gamma_0( {\bf t} ) \big), \text{ where } \textbf{t} = (t_1,\ldots,t_l) \in [1/R^2, 1]^l $.

\vspace{0.5 cm}

The definition of $ \gamma_0 $ combined with the condition on $ \mu $ gives 
$$
   \gamma_0( {\bf t } ) \notin {\cal A}_{2 \mu}^\lambda, \, \forall {\bf t } \in \partial [1/R^2, 1]^l.
$$
Since
$$
   \phi_\lambda \big( \gamma_0( {\bf t } ) \big) < c_\Upsilon, \, \forall {\bf t } \in \partial [1/R^2, 1]^l,
$$
from (\ref{eta}), it follows that
$$
   \eta \big( t, \gamma_0( {\bf t} ) \big) = \gamma_0( {\bf t} ), \, \forall {\bf t} \in \partial [1/R^2, 1]^l.
$$
So, $ \eta \big( t, \gamma_0( {\bf t} ) \big) \in \Gamma_\ast $, for each $ t \ge 0 $.

\vspace{0.5 cm}

$ \noindent \bullet $ The path $ {\bf t} \mapsto \gamma_0( {\bf t} ), \text{ where } \textbf{t} = (t_1,\ldots,t_l) \in [1/R^2, 1]^l $.

\vspace{0.5 cm}

We observe that
$$
   \text{supp} \big( \gamma_0 ( {\bf t} ) \big)\subset \overline{\Omega_\Upsilon}
$$
and
$$
   \phi_\lambda \big( \gamma_0 ( {\bf t} ) \big) \text{ does not depend on }  \lambda \ge 1,
$$
forall  $ {\bf t} \in [1/R^2, 1]^l $. Moreover,
$$
   \phi_\lambda \big( \gamma_0 ( {\bf t} ) \big) \le c_\Upsilon, \, \forall {\bf t} \in [1/R^2, 1]^l
$$
and
$$
   \phi_\lambda \big( \gamma_0 ( {\bf t} ) \big) = c_\Upsilon \text{ if, and only if, } t_j = \frac{1}{R}, \, \forall j \in \Upsilon.
$$
Therefore
$$
   m_0 = \sup \left\{ \phi_\lambda(u) \, ; \, u \in \gamma_0 \big( [1/R^2,1]^l \big) \setminus A_\mu^\lambda \right\}
$$
is independent of $ \lambda $ and $ m_0 < c_\Upsilon $. Now, observing that there exists $ K_\ast > 0 $ such that
$$
   \big| \phi_{ \lambda,j }(u) - \phi_{ \lambda,j }(v) \big| \le K_* \| u-v \|_{ \lambda, \Omega'_j }, \, \forall u,v \in {\cal B}_r^\lambda \text{ and } \forall j \in \Upsilon,
$$
we derive
\begin{equation} \label{max estimate}
    \max_{ {\bf t } \in [1/R^2,1]^l } \phi_\lambda \Big( \eta \big( T, \gamma_0 ( {\bf t} ) \big) \Big) \le \max \left\{ m_0, c_\Upsilon-\frac{1}{2 K_\ast} \sigma_0 \mu \right\},
\end{equation}
for $ T > 0 $ large.

In fact, writing $ u = \gamma_0( {\bf t} ) $, $ {\bf t } \in [1/R^2,1]^l $, if $ u \notin A_\mu^\lambda $, from (\ref{eta derivative}), 
$$
   \phi_\lambda \big( \eta( t, u ) \big) \le \phi_\lambda (u) \le m_0, \, \forall t \ge 0,
$$
and we have nothing more to do. We assume then $ u \in A_\mu^\lambda $ and set
$$
   \widetilde{\eta}(t) = \eta (t,u), \ \widetilde{d_\lambda} = \min \left\{ d_\lambda, \sigma_0 \right\} \text{ and } T = \frac{\sigma_0 \mu}{K_\ast \widetilde{d_\lambda}}.
$$
Now, we will analyze the ensuing cases: \\

\noindent {\bf Case 1:} $ \widetilde{\eta}(t) \in {\cal A}_{\frac{3}{2} \mu}^\lambda, \, \forall t \in [0,T] $.

\noindent {\bf Case 2:} $ \widetilde{\eta}(t_0) \in \partial {\cal A}_{\frac{3}{2} \mu}^\lambda, \text{ for some } t_0 \in [0,T] $. \\

\noindent {\bf Analysis of  Case 1}

In this case, we have $ \Psi \big( \widetilde{\eta}(t) \big) = 1 $ and $ \big\| \phi'_\lambda \big( \widetilde{\eta}(t) \big) \big\| \ge \widetilde{d_\lambda} $ for all $ t \in [0,T] $. Hence, from (\ref{eta derivative}),
$$
   \phi_\lambda \big( \widetilde{\eta}(T) \big) = \phi_\lambda (u) + \int_0^T \frac{d}{ds} \phi_\lambda \big( \widetilde{\eta}(s) \big) \, ds \le c_\Upsilon - \frac{1}{2} \int_0^T \widetilde{d_\lambda} \, ds,
$$
that is,
$$
   \phi_\lambda \big( \widetilde{\eta}(T) \big) \le c_\Upsilon - \frac{1}{2} \widetilde{d_\lambda} T = c_\Upsilon - \frac{1}{2 K_\ast} \sigma_0 \mu,
$$
showing (\ref{max estimate}). \\

\noindent {\bf Analysis of Case 2}

In this case, there exist $ 0 \le t_1 \le t_2 \le T $ satisfying
\begin{gather*}
   \widetilde{\eta}(t_1) \in \partial {\cal A}_\mu^\lambda, \\
	 \widetilde{\eta}(t_2) \in \partial {\cal A}_{\frac{3}{2} \mu}^\lambda,
\end{gather*}
and
$$
   \widetilde{\eta}(t) \in {\cal A}_{\frac{3}{2} \mu}^\lambda \setminus {\cal A}_\mu^\lambda, \, \forall t \in (t_1,t_2].
$$
We claim that 
$$
   \big\| \widetilde{\eta}(t_2)-\widetilde{\eta}(t_1) \big\| \ge \frac{1}{2 K_\ast} \mu.
$$
Setting $ w_1 = \widetilde{\eta}(t_1) $ and $ w_2 = \widetilde{\eta}(t_2) $, we get
$$
   \varrho_{ \lambda, \mathbb R^N \setminus \Omega_\Upsilon } (w_2) = \frac{3}{2} \mu \ \text{ or } \, \big| \phi_{ \lambda, j_0 } (w_2) - c_{j_0} \big| = \frac{3}{2} \mu,
$$
for some $ j_0 \in \Upsilon $.  We analyse the latter situation, once that the other one follows the same reasoning. From the definition of $ {\cal A}_\mu^\lambda $,
$$
   \big| \phi_{ \lambda, j_0 } (w_1) - c_{j_0} \big| \le \mu,
$$
consequently,
$$
   \| w_2-w_1 \| \ge \frac{1}{K_\ast} \big| \phi_{ \lambda, j_0 } (w_2) - \phi_{ \lambda, j_0 } (w_1) \big| \ge \frac{1}{2 K_\ast} \mu.
$$
Then, by  mean value theorem, $ t_2-t_1 \ge \frac{1}{2 K_\ast} \mu $ and, this form,
$$
   \phi_\lambda \big( \widetilde{\eta}(T) \big) \le \phi_\lambda(u) - \int_0^T \Psi \big( \widetilde{\eta}(s) \big) \big\| \phi'_\lambda \big( \widetilde{\eta}(s) \big) \big\| \, ds
$$
implying
$$
   \phi_\lambda \big( \widetilde{\eta}(T) \big) \le c_\Upsilon - \int_{t_1}^{t_2} \sigma_0 \, ds = c_\Upsilon - \sigma_0 (t_2-t_1) \le c_\Upsilon - \frac{1}{2 K_\ast} \sigma_0 \mu,
$$
which proves \ref{max estimate}. Fixing $ \widehat{\eta} (t_1, \ldots, t_l) = \eta \big( T, \gamma_0 (t_1,\ldots,t_l) \big) $, we have that $ \widehat{\eta} \in \Gamma_\ast $ and, hence,
$$
   b_{ \lambda, \Gamma } \le \max_{ (t_1,\ldots,t_l) \in [1/R^2, 1] } \phi_\lambda \big( \widehat{\eta} (t_1,\ldots,t_l) \big) \le \max \left\{ m_0, c_\Upsilon - \frac{1}{2 K_\ast} \sigma_0 \mu \right\} < c_\Upsilon,
$$
which contradicts the fact that $ b_{ \lambda, \Upsilon } \to c_\Upsilon $.
\end{proof}

\vspace{.5cm}
\begin{proof} [Proof of Theorem \ref{main}]
According Proposition \ref{P}, for $\mu$ satisfying \eqref{mu estimate} and $ \Lambda_\ast \ge 1 $, there exists a solution $ u_\lambda $ for $ (A_\lambda) $ such that $ u_\lambda \in {\cal A}_\mu^\lambda \cap \phi_\lambda^{ c_\Upsilon } $, for all $\lambda \geq \Lambda_*$. \\

\noindent {\bf Claim:} 
There are $\lambda_0 \geq \Lambda_*$ and $\mu_0>0$ small enough, such that $u_\lambda$ is a solution for $ \big( P_\lambda \big)$ for $\lambda \geq \Lambda_0$ and $\mu \in (0, \mu_0)$.  

Indeed, assume by contradiction that there are  $ \lambda_n \to \infty $ and $ \mu_n \to 0 $, such that $(u_{\lambda_n})$ is not a solution for $(P_{\lambda_n})$. From Proposition \ref{P}, the sequence $ (u_{\lambda_n}) $ verifies:
\begin{enumerate}
   \item[(a)] $ \phi'_{ \lambda_n }(u_{\lambda_n}) = 0, \, \forall n \in \Bbb N $;
	 \item[(b)] $ \varrho_{ \lambda_n,  \Bbb R^N \setminus \Omega_\Upsilon }(u_{\lambda_n}) \to 0$;  
	 \item[(c)] $ \phi_{ \lambda_n,j } (u_{\lambda_n}) \to c_j, \, \forall j \in \Upsilon. $
\end{enumerate}	
The item (b) ensures we can use Proposition \ref{P:boundedness of the solutions} to deduce $ u_{\lambda_n} $ is a solution for $ \big( P_{\lambda_n} \big) $, for large values of $ n $, which is a contradiction, showing this way the claim. \\

Now, our goal is to prove the second part of the theorem. To this end, let  $(u_{\lambda_n})$ be a sequence verifying the above limits. Since $ \phi_{ \lambda_n }(u_{ \lambda_n } ) $ is bounded, passing a subsequence, we obtain that $ \phi_{ \lambda_n }(u_{ \lambda_n } ) \to c $. This way, using Proposition \ref{(PS) infty condition} combined with item (c), we derive $ u_{ \lambda_n } $ converges in $ W^{ 1,p(x) } \big( \Bbb R^N \big ) $ to a function $ u \in W^{ 1,p(x) } \big( \Bbb R^N \big ) $, which satisfies $ u = 0 $ outside $ \Omega_\Upsilon $ and $ u_{|_{\Omega_j}}, \, j \in \Upsilon $, is a least energy solution for
$$
   \begin{cases}
		  - \Delta_{ p(x) } u + Z(x) u  = f(u), \text{ in } \Omega_j, \\
		  u \in W^{ 1,p(x) }_0 \big( \Omega_j \big), \, u \ge 0, \text{ in } \Omega_j.
	 \end{cases}
$$
\end{proof}


\begin{thebibliography}{99}
   \bibitem{Acerbi1} E. Acerbi \& G. Mingione, Regularity results for stationary electrorheological fluids, {\it Arch. Rational Mech. Anal.} {\bf 164} (2002), 213-259.
   \bibitem{Acerbi2} E. Acerbi \& G. Mingione, Regularity results for electrorheological fluids: stationary case, {\it C.R. Math. Acad. Sci. Paris} {\bf 334} (2002), 817-822.
	 \bibitem{Alves} C.O. Alves, Existence of multi-bump solutions for a class of quasilinear problems, {\it Adv. Nonlinear Stud.} {\bf 6} (2006), 491-509.
	 \bibitem{Alves2} C.O. Alves, Existence of solutions for a degenerate $ p(x) $-Laplacian equation in $ \mathbb R^N $, {\it J. Math. Anal. Appl.} {\bf 345} (2008) 731-742.
   \bibitem{Alves3} C.O. Alves, Existence of radial solutions for a class of $ p(x) $-Laplacian equations with critical growth, {\it Differential and Integral Equations} {\bf 23} (2010), 113-123.
	 \bibitem{AlvesBarreiro} C.O. Alves \& J.L.P. Barreiro, Existence and multiplicity of solutions for a $ p(x) $-Laplacian equation with critical growth, {\it J. Math. Anal. Appl.} {\bf 403} (2013) 143-154.
    \bibitem{AlvesFerreira} C.O. Alves \& M.C. Ferreira, Nonlinear perturbations of a $p(x)$-Laplacian equation with critical growth in $ \mathbb R^N $, to appear in {\it Math. Nach.} (2013).
   \bibitem{AlvesFerreira1} C.O. Alves \& M.C. Ferreira, Existence of solutions for a class of $ p(x)$-Laplacian equations involving a concave-convex nonlinearity with critical growth in $ \mathbb R^N $, to appear in {\it Topol. Methods Nonlinear Anal.} (2013).
	 \bibitem{AlvesSouto}  C.O. Alves \& M.A.S. Souto, Existence of solutions for a class of problems in $ \mathbb R^N $ involving $ p(x) $-Laplacian, {\it Prog. Nonlinear Differential Equations and their Appl.} {\bf 66 } (2005), 17-32.
   \bibitem{AmRb} A. Ambrosetti \& P.H. Rabinowitz, Dual variational methods in critical point theory and applications, J. Funct. Anal. 14 (1973) 349-381.  
	 \bibitem{Antontsev} S.N. Antontsev \& J.F. Rodrigues, On stationary thermo-rheological viscous flows, {\it Ann. Univ. Ferrara Sez. VII Sci. Mat.}  {\bf 52} (2006), 19-36.
   \bibitem{CLions} A. Chambolle \& P.L. Lions, Image recovery via total variation minimization and related problems, {\it Numer. Math.} {\bf 76} (1997), 167-188.
   \bibitem{Chen} Y. Chen, S. Levine \& M. Rao,  Variable exponent, linear growth functionals in image restoration, {\it SIAM J. Appl. Math.} { \bf 66} (2006), 1383-1406.
   \bibitem{DelPinoFelmer} M. del Pino \& P.L. Felmer, Local mountain passes for semilinear elliptic problems in unbounded domains, { \it Calc. Var. PDE} {\bf 4} (1996), 121-137.
	 \bibitem{DingTanaka} Y.H. Ding \& K. Tanaka, Multiplicity of positive solutions of a nonlinear Schr\"odinger equation, {\it Manuscripta Math.} {\bf 112(1)} (2003) 109-135
	 \bibitem{Fan1} X.L. Fan, On the sub-supersolution method for $ p(x) $-Laplacian equations, {\it J. Math. Anal. Appl.} {\bf 330} (2007), 665-682.
	 \bibitem{Fan} X.L. Fan, $ p(x) $-Laplacian equations in $ \mathbb R^N $ with periodic data and nonperiodic perturbations, {\it J. Math. Anal. Appl.} {\bf 341} (2008), 103-119.
	 \bibitem{FanZhao0} X. Fan \& D. Zhao, A class of De Giorgi type and H\"older continuity, {\it Nonlinear Anal.} {\bf 36} (1999), 295-318.
	 \bibitem{FZ} X.L. Fan \& D. Zhao, On the Spaces $ L^{ p(x) } \big( \Omega \big) $ and $ W^{ 1,p(x) } \big( \Omega \big) $, {\it J. Math. Anal. Appl.} {\bf 263} (2001), 424-446.
	 \bibitem{FZ1} X.L. Fan \& D. Zhao, Nodal solutions of $ p(x) $-Laplacian equations, {\it Nonlinear Anal.} {\bf 67} (2007), 2859-2868.
	 \bibitem{FanShenZhao} X.L. Fan, J.S. Shen \&  D. Zhao, Sobolev embedding theorems for spaces $ W^{ k,p(x) } \big( \Omega \big) $, {\it J. Math. Anal. Appl.} {\bf 262} (2001) 749-760.
	 \bibitem{BSS} J. Fern\'andez Bonder, N. Saintier \& A. Silva. On the Sobolev embedding theorem for variable exponent spaces in the critical range, {\it J. Differential Equations} {\bf 253} (2012), 1604-1620
   \bibitem{FuZa} Y. Fu \& X. Zhang, Multiple solutions for a class of $ p(x) $-Laplacian equations in involving the critical exponent, {\it Proceedings Roy. Soc. of Edinburgh Sect A}  {\bf 466} (2010) 1667-1686.
   \bibitem{FuscoSbordone} N. Fusco \& C. Sbordone, Some remarks on the regularity of minima of anisotropic integrals, {\it Comm. Partial Differential Equations } {\bf 18(1-2)} (1993), 153-167.
	 \bibitem{Kavian} O. Kavian, Introduction \`a la th\'eorie de points critiques et applications aux probl\`emes elliptiques, Springer-Verlag France, Paris, 1993.
   \bibitem{LadyUral} O. A. Ladyzhenskaya \& N. N. Ural'tseva, Linear and quasilinear elliptic equations, Acad. Press, 1968.
	 \bibitem{MR} M. Mih$\breve{a}$ilescu \& V. R$\breve{a}$dulescu, On a nonhomogeneous quasilinear eigenvalue problem in Sobolev spaces with variable exponent, {\it Proc. Amer. Math. Soc.} {\bf 135(9)} (2007) 2929-2937 (electronic).	
   \bibitem{Ru} M. Ruzicka, Electrorheological  fluids: Modeling and mathematical theory. Lecture Notes in Mathematics, vol. 1748, Springer-Verlag, Berlin, 2000.
	 \bibitem{Sere} E. S\'er\'e, Existence of infinitely many homoclinic orbits in Halmitonian systems, {\it Math Z} {\bf 209} (1992), 27-42.
	 \bibitem{W} M. Willem, Minimax Theorems, Birkh\"auser Boston, MA, 1996.

\end{thebibliography}
\end{document}